\def\@fnsymbol#1{\ensuremath{
\ifcase#1
\or 
* 
\or 
\mathparagraph
\or
\mathsection
\or 
\ddagger
\or 
\|
\or 
**
\or 
\dagger\dagger
\or 
\ddagger\ddagger 
\else\@ctrerr\fi}}
\newcommand{\am}{{\alpha_{-}}}
\newcommand{\ap}{{\alpha_{+}}}
\newcommand{\R}{\mathbb {R}}
\newcommand{\N}{\mathbb {N}}
\newcommand{\E}{I\!\!E}
\newcommand{\indiq}{\pmb{1}}
\newcommand{\Dd}{D([0,\delta],\mathbb{R})}
\newcommand{\F}{\mathcal{F}}
\newcommand{\norme}[1]{|\!| #1|\!|}
\newtheorem{theorem}{Theorem}[section]
\newtheorem{assumption}[theorem]{Assumption}
\newtheorem{definition}[theorem]{Definition}
\newtheorem{lemma}[theorem]{Lemma}
\newtheorem{proposition}[theorem]{Proposition}
\newtheorem{remark}[theorem]{Remark}
\begin{document}

\begin{frontmatter}

\title{Strong propagation of chaos for systems of interacting particles with nearly stable jumps} 

\runtitle{Systems of particles with nearly stable jumps}

\begin{aug}

\author{\fnms{Eva} \snm{L\"ocherbach}\thanksref{m1}\ead[label=e4]{eva.locherbach@univ-paris1.fr}},
\author{\fnms{Dasha}
\snm{Loukianova}\thanksref{m2}\ead[label=e5]{dasha.loukianova@univ-evry.fr}}
\author{\fnms{Elisa} 
\snm{Marini}\thanksref{m3}\ead[label=e6]{marini@ceremade.dauphine.fr}}

\address{\thanksmark{m1}Statistique, Analyse et Mod\'elisation Multidisciplinaire, Universit\'e Paris 1 Panth\'eon-Sorbonne, EA 4543 et FR FP2M 2036 CNRS
  \thanksmark{m2}Laboratoire de Math\'ematiques et Mod\'elisation d'\'Evry,  Universit\'e
  d'\'Evry Val d'Essonne, UMR CNRS 8071 
\thanksmark{m3}CEREMADE, UMR CNRS 7534, Universit{\'e} Paris Dauphine-PSL
  }

\runauthor{E. L\"ocherbach and D. Loukianova and E. Marini}

 \end{aug}

\begin{abstract}
We consider a system of $N$ interacting particles, described by SDEs driven by Poisson random measures, where the coefficients depend on the empirical measure of the system. Every particle jumps with a jump rate depending on its position. When this happens, all the other particles of the system receive a small random kick which is distributed according to a heavy tailed random variable belonging to the domain of attraction of an $\alpha-$ stable law and scaled by $N^{-1/\alpha},$ where $0 <  \alpha <2 .$ We call these jumps \textit{collateral jumps}. Moreover, in case $ 0 < \alpha < 1, $ the jumping particle itself undergoes a macroscopic, \textit{main} jump. Such systems appear in the modeling of large neural networks, such as the human brain. 

The particular scaling of the collateral jumps implies that the limit of the empirical measures of the system is random and equals the conditional distribution of one typical particle in the limit system, given the source of common noise. Thus the system exhibits the \textit{conditional propagation of chaos property}. The limit system turns out to be solution of a non-linear SDE, driven by an $ \alpha-$stable process. We prove strong unique existence of the limit system and introduce a suitable coupling to obtain the strong convergence of the finite to the limit system, together with precise error bounds for finite time marginals.\\
\noindent{\bf MSC2020: } 60E07; 60G52; 60K35. 

\end{abstract}

 \begin{keyword}
 \kwd{Mean field interaction}
 \kwd{Piecewise deterministic Markov processes}
 \kwd{Domain of attraction of a stable law}
 \kwd{Stable central limit theorem}
 \kwd{Conditional propagation of chaos}
\kwd{Exchangeability}
\kwd{$\alpha-$stable L{\'e}vy processes}
\kwd{Time changed random walks with stable increments} 
 \end{keyword}
\end{frontmatter}

\section{Introduction}
In the present paper we study the large population limit of the Markov process $X^N = (X^N_t)_{t \geq 0 }, $ $X^N_t = (X^{N, 1 }_t, \ldots , X^{N, N}_t ),$ which takes values in $\R^N$  and has generator $A^N$ given by 
\begin{multline}\label{eq:dynintro}
A^N  \varphi ( x) = 
 \sum_{i=1}^N \partial_{x^i} \varphi (x) b(x^i , \mu^{N,x} )  \\
 + \sum_{i=1}^N f (x^i) \int_{\R} \nu ( du ) \left( \varphi \left( x + \psi ( x^i, \mu^{N,x}) e_i + \sum_{j\neq i } \frac{u}{N^{1/\alpha}} e_j \right) - \varphi ( x) \right) ,
\end{multline}
for any smooth test function $ \varphi : \R^N \to \R .$ In the above formula,  $ x= (x^1, \ldots, x^N) \in \R^N,$ $ \mu^{N,x}  = \frac{1}{N} \sum_{j=1}^N \delta_{x^j } $ is the associated
empirical measure, and  $ e_j $ denotes the $j-$th unit vector in $ \R^N.$ Moreover, $ b (x^i , \mu^{N,x})   $ is a bounded drift function depending both on the position $x^i$ of a fixed particle and on the empirical measure $\mu^{N,x} $ of the total system, $ f : \R \to \R_+$ is a Lipschitz continuous bounded rate function, and $ \nu $ is the law of a heavy-tailed random variable belonging to the domain of attraction of an $\alpha$-stable law (see Definition \ref{def:strongdomain} below); $ \nu $ is supposed to be centered if $\alpha\in (1,2)$. 
Since $f$ is bounded, $ X^N $ is a piecewise deterministic Markov process. In between jumps, any particle $ X^{N, i } $ follows a deterministic flow having drift $ b.$ Each particle jumps at rate $ f (x^i ) $ whenever its current position is $x^i.$ When jumping, it receives an additional kick $ \psi ( x^i, \mu^{N,x} ) $ which is added to its position. Moreover, at the same time, all the other particles in the system receive the same \textit{collateral jump} (\cite{andreis_mckeanvlasov_2018}). This collateral jump is random, it has law $\nu,$ and it is renormalized by $ N^{1/\alpha }, $ where $N$ is the system size. Therefore in our system there is coexistence of main jumps - the jumps of size $ \psi(x^i , \mu^{N,x}) $ - and of random and common small kicks that are received synchronously by all the other particles. Such systems have originally been introduced to model large biological neural nets such as the brain where the collateral jumps correspond to the synaptic weight of a neuron on its postsynaptic partners and the main jumps to the hyperpolarization of a neuron after a spike, see for instance \cite{Brillinger}, \cite{Cessac},  \cite{DGLP}, \cite{Touboul}, see also the recent monograph \cite{GLP}.

In a series of papers \cite{ELL1} and \cite{ELL2}, we have already studied the mean field limit of such systems in a diffusive scaling, that is, when $\alpha=2$ and $ \nu $ is a centered probability measure on $ \R$ having a second moment. In this case, the central limit theorem implies that the large population limit of the system having generator \eqref{eq:dynintro} is given by an infinite exchangeable system evolving according to  
\begin{equation}
\label{eq:dynlimintro}
 X^i_t = X^i_0+  \int_0^t b( X^i_s, \mu_s)    ds +  \int_0^t \psi ( X^i_{s- }, \mu_{s-})  d {Z}^{i}_s + \sigma \int_0^t \sqrt{  \mu_s ( f)  } d W_s ,\; t \geq 0,   i \in \N^*,
\end{equation}
where $\sigma^2 = \int_\R u^2 \nu ( du) $ and where $ W$ is a standard one-dimensional Brownian motion which is common to all particles. 
In \eqref{eq:dynlimintro} above, $ {Z}^i $ is the counting process associated to the jumps of particle $i, $ having intensity $ t \mapsto f (  X^i_{t^- }).$ 

The presence of the Brownian motion $W$ is a source of common noise in the limit system and implies that 
the \textit{conditional propagation of chaos} property holds: in the limit system, particles are conditionally independent, if we condition on $ W.$ In particular, we have shown in \cite{ELL1} that the limit empirical measure $  \mu_s$ is the directing measure of the infinite limit system (see Def. (2.6) in \cite{aldous1983} for the precise definition), which is necessarily given by $ \mu_s = {\mathcal L } ( X^i_s | W_u ,u \le s) ,$ such that the stochastic integral term appearing in \eqref{eq:dynlimintro} is given by 
\[\int_0^t \sqrt{ \E ( f (  X^i_s ) | W_u, u \le s   ) } d W_s.\]

It is a natural question to ask what happens in the situation when $ \nu $ does not belong to the domain of attraction of a normal law but of a stable law of index $ \alpha < 2 .$ The present paper gives an answer to this question. Not surprisingly, the limit Brownian motion will be replaced by a stable process $S^\alpha $ of index $ \alpha $ such that the limit equation is  now given by 
\begin{equation}
\label{eq:dynlimintrovrai}
\bar X^i_t = \bar X^i_0+  \int_0^t b(\bar X^i_s, \bar \mu_s)    ds +  \int_0^t \psi (\bar X^i_{s- }, \bar \mu_{s-})  d\bar{Z}^{i}_s + \int_0^t  (\bar \mu_{s-} ( f) )^{1 / \alpha}  d S^\alpha_s ,\; t \geq 0,   i \in \N^* ,
\end{equation}
with $ \bar  \mu_s = {\mathcal L } ( \bar X^i_s | S^\alpha_u, u \le s ) .$ In the case $ 1 < \alpha < 2, $ we exclude main jumps; that is, we suppose that $ \psi ( \cdot ) \equiv 0 .$ This is due to the fact that the stochastic integral with respect to 
$ \bar Z^i $ has to be treated in the $L^1- $ norm which is a norm not suited for the integral with respect to $ S^\alpha ,$ see Remark \ref{rem:bigjumps} below.

The present paper establishes the proof of the strong convergence of the finite system to the limit system, with respect to a convenient distance. This is done by proposing a coupling of the finite system with the limit one. More precisely we will construct a particular version $S^{N,\alpha}$ of the stable process which is defined on an extension of the same probability space on which the finite system $ X^N $ is defined, and then we consider the limit system driven by $ S^{N, \alpha}.$  So we have to ensure first the existence of a unique strong solution of \eqref{eq:dynlimintrovrai}. 
This is relatively straightforward, when considering the system before the first big jump of the driving stable process, bigger than $K$ for some fixed large $K$. This strategy is inspired by the study of classical SDEs driven by L\'evy noise proposed by \cite{fournier}, it has been used in the framework of non-conditional McKean-Vlasov equations by  \cite{cavallazzi23},  and it can be very easily extended to the present framework of conditional McKean-Vlasov equations. 

In a second step, we then prove the strong convergence of the finite system to the limit system.  
The main ingredient of this step is an explicit construction of the stable process $ S^{ N, \alpha} $ based on the random nearly stable heights of the collateral jumps present in the finite particle system. We discretize time and freeze the jump rate during small time intervals of length $ \delta  . $ Let us suppose for simplicity that $ \nu $ is already the law of a strictly stable random variable (this assumption will not be needed in the sequel of the text). Then the total contribution of collateral jumps during one such interval is a random sum, renormalized by $ N^{ 1 /\alpha},$ constituted of independent stable random variables, each representing one collateral jump. The total number of terms  in the sum is Poisson distributed (conditionally). Thus we are able to use the self-similarity property of the $ \alpha-$stable law: we know that, if $ Y_n$ are i.i.d. strictly  $ \alpha-$stable random variables, then 
\begin{equation*}
 Y_1 +\ldots + Y_n \sim n^{ 1 / \alpha} Y_1 \footnote{Since we need this exact self-similarity property of the $ \alpha-$stable law we are not able to truncate the stable random variables $ Y_n$ as it is often done in the literature.}.
\end{equation*}  
The following trivial but very useful result says that this property survives for random sums, and it is the main argument of our coupling construction.

\begin{proposition}\label{prop:scaling}
Let $ Y_n$ be i.i.d. strictly $ \alpha-$stable random variables. Let $P$ be an integer-valued random variable, independent of $(Y_n)_n.$ Then the following equality holds.
\[\sum_{n=1}^P Y_{n} = P^{1/\alpha} \tilde Y_1 ,\; \; \;  \tilde Y_1 \sim Y_1,\]
where $P$ and $\tilde Y_1$ are independent. 
\end{proposition}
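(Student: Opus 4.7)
The plan is to reduce the statement to the deterministic self-similarity identity $Y_1 + \cdots + Y_n \overset{d}{=} n^{1/\alpha} Y_1$ (which holds for each fixed $n$ by strict $\alpha$-stability) and then peel off the randomness of $P$ by conditioning. Since the proposition is phrased as an equality involving a random variable $\tilde Y_1$ with prescribed law and independence, the cleanest route is to actually \emph{construct} $\tilde Y_1$ on the same probability space, rather than just verify equality in distribution; this is what will be needed when the result is applied to the coupling.

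Concretely, I would define
\[
\tilde Y_1 \;:=\; \begin{cases} P^{-1/\alpha}\sum_{n=1}^{P} Y_n & \text{if } P \geq 1,\\ Y_1^\star & \text{if } P = 0,\end{cases}
\]
where $Y_1^\star$ is an auxiliary random variable, sampled from the law of $Y_1$ and independent of $(P,(Y_n)_n)$ on a suitable enlargement of the probability space. By construction we then have $\sum_{n=1}^P Y_n = P^{1/\alpha}\tilde Y_1$ with the convention $P^{1/\alpha}\cdot\tilde Y_1 = 0$ when $P=0$. It remains to check that $\tilde Y_1 \sim Y_1$ and that $\tilde Y_1$ is independent of $P$.

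For the verification, I would compute the joint law of $(\tilde Y_1, P)$ by conditioning on $\{P = n\}$. For $n\geq 1$, by independence of $P$ and $(Y_n)_n$ and by strict $\alpha$-stability,
\[
\mathcal L\bigl(\tilde Y_1 \,\big|\, P = n\bigr) \;=\; \mathcal L\Bigl(n^{-1/\alpha}(Y_1+\cdots+Y_n)\Bigr) \;=\; \mathcal L(Y_1),
\]
which does not depend on $n$; for $n=0$ the same conclusion holds by the definition of $Y_1^\star$. Hence the conditional law of $\tilde Y_1$ given $P$ coincides with $\mathcal L(Y_1)$, which is exactly the statement that $\tilde Y_1\sim Y_1$ and $\tilde Y_1$ is independent of $P$. (An alternative but essentially equivalent route is to compute the characteristic function: using $\phi_{Y_1}(t)^n = \phi_{Y_1}(n^{1/\alpha}t)$, which is the analytic form of strict $\alpha$-stability, one gets $\mathbb E\bigl[e^{it\sum_{n=1}^P Y_n}\bigr] = \mathbb E\bigl[\phi_{Y_1}(P^{1/\alpha}t)\bigr]$, matching the characteristic function of $P^{1/\alpha}\tilde Y_1$ with $\tilde Y_1\sim Y_1$ independent of $P$.)

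There is essentially no obstacle in this argument; the only point of care is to specify $\tilde Y_1$ on the event $\{P=0\}$ (where the identity $\sum_{n=1}^P Y_n = P^{1/\alpha}\tilde Y_1$ is trivially $0=0$ regardless of $\tilde Y_1$), which must be done so as to preserve both the marginal law of $\tilde Y_1$ and its independence from $P$. This is why the enlargement by the auxiliary variable $Y_1^\star$ is needed in the construction above.
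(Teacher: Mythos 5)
Your proof is correct and follows essentially the same route as the paper: the paper gives no separate proof of Proposition \ref{prop:scaling}, but its quantitative analogue, Proposition \ref{lem:preliminary_results}, is established by exactly your construction --- defining the stable variable as $P^{-1/\alpha}\sum_{n=1}^{P}Y_n$ on $\{P\geq 1\}$ (via a coupling function $G_n$ that reduces to the identity in the strictly stable case), drawing an independent copy on $\{P=0\}$, and verifying the law and the independence from $P$ by conditioning on $\{P=n\}$. Your attention to the event $\{P=0\}$ matches the paper's introduction of $G_0$ for the same purpose.
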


We apply this result on each time interval $ [ k \delta, (k+1) \delta [,$ $ k \geq 0,$ such that the total number of jumps during this time interval, $ P,$ follows a Poisson distribution with parameter $ \delta \sum_{i=1}^N f (x^i )$, conditionally to $\mathcal{F}_{k\delta}$. Here, $ f(x^i) $ is the frozen jump rate of particle $i$ at time $k \delta.$ We then use the law of large numbers for the Poisson random variable $P$ to replace $P$ by its intensity $ \delta  \sum_{i=1}^N f (x^i ) =  N  \delta \int f d \mu^{N,x} $ (where $ \mu^{N, x} $ is the empirical measure $N^{-1} \sum \delta_{x^i} $), so that  $N^{ - 1 / \alpha}  P^{1/\alpha}  \sim (\delta \int f d\mu^{N,x})^{1/\alpha} $. So the contribution of collateral jumps, over one time interval, is approximately given by $ ( \int f d \mu^{N,x} )^{1/\alpha}  \delta^{1 /\alpha } \tilde Y_1,$ and since $ \delta^{1 /\alpha } \tilde Y_1 \sim S^\alpha_\delta, $ it is then reasonable to expect that, as $ \delta \to 0, $ the joint contribution of all small intervals gives rise to the stochastic integral term $  \int_0^t  (\bar \mu_{s-} ( f) )^{1 / \alpha}  d S^\alpha_s$ that appears in \eqref{eq:dynlimintrovrai} above, and this is precisely the strategy of our proof, up to technical details.

Our main result Theorem \ref{thm:prop:chaos}  then shows that for all $ t > 0, $ for all $ i = 1, \ldots , N , $ 
in the case $\alpha>1$,
$$
 \lim_{ N \to \infty } \E[  | X^{N,i}_t -  \bar X^{i}_t | ] = 0 ,
$$
and, in the case $\alpha<1$, for any $0< \am < \alpha$,
$$ \lim_{ N \to \infty } \E[  | X^{N,i}_t -  \bar X^{i}_t | \wedge  | X^{N,i}_t -  \bar X^{i}_t|^{ \am }  ] =  0  .
 $$  
\sloppy Moreover, for all $ K > 0$ a precise control on $\E[  \indiq_{\{t < T^N_K\}} | X^{N,i}_t -  \bar X^{i}_t | ] $ in the case $\alpha>1$ and $\E[ \indiq_{\{t < T^N_K\}} | X^{N,i}_t -  \bar X^{i}_t | \wedge  | X^{N,i}_t -  \bar X^{i}_t|^{ \am }  ]$ in the case $0<\alpha<1$ is given 
in terms of $ \alpha, N , t $ and of the truncation level $ K, $ where $T^N_K$ is the first moment when  the driving $\alpha$-stable process in \eqref{eq:dynlimintrovrai} executes a jump exceeding  $K$ (see Theorem \ref{thm:prop:chaos} below for the precise form of this control).

\section*{Comments on the norms we have used} 
Our error bounds contain three main contributions, and each of these contributions has to be treated using a different norm. The final strong error is then obtained by balancing all these error terms. 
\begin{enumerate}
\item The error due to the stable central limit theorem is treated using the $L^1 -$norm in case $ \alpha > 1 ,$ and the norm induced by $ | \cdot | \wedge  |\cdot|^\am $ in case $ \alpha < 1.$ 
\item We need to study stochastic integral terms of the type \sloppy $\int_0^t  ( \mu^N_{s-} ( f) )^{1 / \alpha}  d S^{N, \alpha}_s ,$  with $\mu^N$ the empirical measure of the finite particle system $ (X^{N, i })_{ 1 \le i \le N}, $ $\int_0^t  (\bar \mu_{s-} ( f) )^{1 / \alpha}  d S^{N, \alpha}_s $ and their convergence. Since small and big jumps of $S^{N,\alpha}$ are not integrable in the same norm, as usual, we cut big jumps and work on the event $  \{t < T^N_K\} , $ for some fixed $K.$ To control the dependency on $K$ in the case $\alpha >1 $ we then work in the $ L^\ap-$norm, for some $\alpha <  \ap < 2  .$ For technical reasons, this trick does not work in the case $\alpha< 1$, such that we work with the $L^1- $norm then. 
\item Finally, most error terms that do only concern the finite particle system (such as time discretization errors) have to be controlled in the $L^\am-$norm, since $ X^{N, i }_ t \in L^\am $ does not belong to $ L^\alpha .$ 
\end{enumerate}

\section*{Bibliographical comments}
The property of \textit{conditional propagation of chaos} is related to the existence of a common noise in the limit system and has been a lot studied in the literature; see for instance \cite{carmona_mean_2016}, \cite{coghi_propagation_2016} and \cite{dermoune_propagation_2003}. In these papers the common noise, which is most often a common, maybe infinite dimensional,  Brownian motion, is already present at the level of the finite particle system, the mean field interactions act on the drift of each particle, and the scaling is the classical one in~$N^{-1}.$ On the contrary to this, in our model, the common noise is only present in the limit, and it is created by the $\alpha-$stable limit theorem as a consequence of the joint action of the collateral jumps of the finite particle system and of the scaling in $N^{ - 1/\alpha }.$ 

In the classical setting of \textit{unconditional propagation of chaos}, mean field limits for particle systems driven by general L\'evy noise have also been extensively studied. We refer the interested reader to the 
paper of Graham \cite{graham92} who considers equations driven by (possibly compensated) Poisson random measures. He works under Lipschitz and integrability conditions with the $L^1$-norm  - which is not possible when the driving noise is an $\alpha-$stable process with $ \alpha < 1$. \cite{jourdainetal} have worked with general L\'evy noise, but mostly under $L^2$-conditions. They consider also the case when the driving process does only possess moments of order $ \alpha < 1, $ and in that situation they are only able to obtain weak existence of the limit without proving uniqueness in law. In the recent paper \cite{cavallazzi}, the author obtains a quantitative propagation of chaos result for systems driven by $ \alpha-$stable subordinators, in the case $ \alpha \in ]1, 2[ .$ There is no measure dependent term within the stochastic integral term in \cite{cavallazzi}, and the author is mostly interested in relaxing the regularity assumptions on the coefficients and works only under the assumption of H\"older continuity. Finally, several papers are devoted to the well-posedness of the limit equation. Let us mention \cite{menozzietalkonakov} which treats the general case $ 0 < \alpha < 2, $ under mild regularity assumptions, by the means of an associated non-linear martingale problem. Finally,  \cite{cavallazzi23} treats the case $ 1 < \alpha < 2, $ including a measure dependent term in the stochastic integral term, under general Lipschitz assumptions. All these papers are devoted to the unconditional framework.    

Let us come back to the discussion of our present paper, which deals with the conditional propagation of chaos property in a situation where the driving L\'evy process appears only in the limit system. We have already mentioned that it continues and extends the diffusive setting studied in \cite{ELL1} and \cite{ELL2} to the framework of the $ \alpha-$ stable limit theorem. As in  \cite{ELL1} and \cite{ELL2}, the basic strategy  is to discretize time and construct a coupling with the driving noise of the limit equation explicitly within each time interval $ [k \delta, (k+1)\delta) .$ This strategy is actually inspired by the approach proposed in \cite{DGLP}. 

We now quickly discuss the main differences with respect to the former diffusive setting. In the bounded variation regime $ 0 < \alpha < 1,$ the error due to time discretization is now of order $ \delta ,$  as opposed to $ \sqrt{\delta} $ which was the leading order in the diffusive scaling. Contrarily to the diffusive case,  the main contribution to the error comes now from the error made when replacing the Poisson variable $P$ by its expectation and from the quantified error in the stable limit theorem. The first error gives rise to a term of order $ ( N \delta)^{ - \alpha/2} ,$ and this is a main contribution to the error. In the diffusive case this error was negligible. Concerning the quantified rate of convergence in the stable limit theorem, we suppose that the law of the collateral jumps is heavy-tailed and rely on recent results of \cite{chen22}. Finally, there is the error which is due to the bound on the Wasserstein$-\am$ distance between the empirical measure $ \bar \mu^N_s$ of the limit system and $ \bar \mu_s .$ We rely on results obtained by \cite{fournierguillin} to control this error.  

In case $ 1 < \alpha < 2, $ to control the error is more complicated than in the original diffusive case. Indeed, the presence of the stochastic integral imposes that we have to deal with the small jumps of the driving stable process with respect to at least the $L^\ap-$norm. And this is what we do. However, all errors related to the finite particle system can only be treated with respect to the $L^\am-$norm. So we use H\"older's inequality repeatedly -- and each time we do this, we loose with respect to the original convergence rate. The error due to time discretization is now of order $ \delta^{1 / \am} ,$ which gives another important contribution.  Concerning the quantified rate of convergence in the stable limit theorem, we rely on recent results of \cite{xu}.

Our paper is accompanied by a companion paper \cite{dasha-eva} which studies the case $ 0 < \alpha < 1, $ in a particular framework where all jumps are positive and where big jumps do not need to be cut.

\section*{General notation}
Throughout this paper we shall use the following notation. Given any measurable space $ (S, \mathcal S), $ $\mathcal{P} (S)$ denotes the set of all probability measures on $ (S, \mathcal S),$ endowed with the topology of weak convergence. For $p> 0,$ $\mathcal{P}_p(\R)$ denotes the set of probability measures on $\R$ that have a finite moment of order~$p$.  
For two probability measures $\nu_1, \nu_2 \in \mathcal{P}_p (\R),$ the Wasserstein distance of order $p$ between $\nu_1$ and $\nu_2$  is defined as
\[
W_p(\nu_1,\nu_2)=\inf_{\pi\in\Pi(\nu_1,\nu_2)}\left( \int_\R\int_\R |x-y|^p \pi(dx,dy) \right)^{(1/p) \wedge 1 } ,
\]
where $\pi$ varies over the set $\Pi(\nu_1,\nu_2)$ of all probability measures on the product space $\R\times \R$ with marginals $\nu_1$ and $\nu_2$. Notice that  the Wasserstein distance of order $p$ between $\nu_1$ and $\nu_2$ can be rewritten as  the infimum of $(\E[| X - Y|^p])^{(1/p) \wedge 1 }$ over all possible couplings $(X,Y)$ of the random elements $X$ and $Y$ distributed according to $\nu_1$ and $\nu_2$ respectively, i.e.
\[
W_p(\nu_1,\nu_2)=\inf\left\{(\E{|X - Y|^p})^{(1/p)\wedge 1 }: \mathcal{L}(X)=\nu_1\ \mbox{and} \ \mathcal{L}(Y)=\nu_2  \right\}.
\]
Moreover, the Kantorovitch-Rubinstein duality yields
\[ W_1(\nu_1,\nu_2) = \sup\{ \nu_1(\varphi) - \nu_2(\varphi): \forall\, x,\,y\in \R\,\, |\varphi(x)-\varphi(y)|\leq  |x-y|\} .\]  
Furthermore, for any $q \leq 1$, and for all $ x, y \in \R, $ we write
\begin{equation}\label{eq:dq}
d_{q}(x,y) = |x-y| \wedge |x-y|^{q} \mbox{ and } \| x \|_{d_q}  := |x| \wedge |x|^q .
\end{equation}
We frequently use the fact that, if restricted to the positive half line,  $\R_+ \ni x \mapsto\|x\|_{d_q} $ is increasing and concave, and therefore sub-additive, that is,  
\begin{equation}\label{eq:dqsubadditive}
\|x+y\|_{d_q}\leq \|x\|_{d_q} + \| y \|_{ d_q} , \mbox{ for all } x, y \geq 0. 
\end{equation}
In particular, $ d_q (x, y ) $ defines a metric on $ \R.$

Finally, for any $\nu_1,\, \nu_2 \in \mathcal{P}_1(\R)$, 
\[ W_{d_{q}}(\nu_1,\nu_2) = \inf_{\pi \in \Pi(\nu_1,\nu_2)}\left(\int_{\R}\int_{\R} d_{q}(x,y) \pi(dx,dy) \right)\]
is the Wasserstein distance associated with the metric $d_{q}$ (see \cite{chen22}). 
As for the classical Wasserstein distance, the Kantorovitch-Rubinstein duality yields (\cite{villani}, particular case 5.16)
\[W_{d_{q}}(\nu_1,\nu_2) = \sup\{\nu_1(\varphi) - \nu_2(\varphi) : \forall x,\,y\in \R\,\, |\varphi(x) - \varphi(y)| \leq  d_{q}(x,y) \} . \]
Notice that $W_{d_{q}}(\nu_1,\nu_2)\leq W_1(\nu_1,\nu_2)$ for all $\nu_1,\,\nu_2 \in \mathcal{P}_1(\R)$. 

Moreover, $D(\R_+,\R)$ denotes the space of c\`adl\`ag functions from $\R_+$ to $\R $, endowed with the Skorokhod metric, and 
$C$ and $K$ denote arbitrary positive constants whose values can change from line to line in an equation. We write $C_\theta$ and $K_\theta$ if the constants depend on some parameter $\theta.$

Finally, throughout this paper, $ \alpha_-  < \alpha$ and $\alpha_+ > \alpha $ will be two fixed constants belonging to $ ( 0, 2 ) ,$ one strictly smaller and the other strictly larger than $ \alpha ,$ the index of the driving stable process.  
We also suppose that $ \am > 1 $ in case $ \alpha > 1.$

\section{
 Model, assumptions, main results, organisation of the paper}\label{sec:model}
 \subsection{The model}\label{subsec:model}
Throughout this article, $ S^\alpha = (S^\alpha_t)_{t \geq 0} $ denotes an  $\alpha-$stable L{\'e}vy process given by (\cite{applebaum}, \cite{sato})
\begin{align}\label{eq:St}
 & S^\alpha_t = \int_{ [0, t ] \times \R^\ast} z \tilde M (ds, dz ) , \mbox{ if } \alpha >  1 \\ \nonumber
& S^\alpha_t = \int_{ [0, t ] \times \R^\ast} z M (ds, dz ) , \mbox{ if } \alpha < 1 .
  \end{align}
Its jump measure $M$ is a Poisson random measure on $ \R_+ \times \R^\ast $ having intensity $ds \nu^\alpha(dz)$, with
\begin{equation*}
\nu^\alpha(dz) = \frac{a_+}{z^{\alpha+1}}\indiq_{\{z>0\}} dz + \frac{a_-}{|z|^{\alpha+1}}\indiq_{\{z<0\}}dz ,
\end{equation*}
where $ a_+, a_- \geq 0 $ are some fixed parameters, and $\tilde M(ds,dz) \coloneqq M(ds,dz) - \nu^\alpha(dz) ds$ denotes the compensated Poisson random measure.

In what follows we will consider random variables which are distributed according to a heavy-tailed law which belongs to the strong domain of attraction of a stable law, according to the following definition. 

\begin{definition}\label{def:strongdomain}
Following Example 2 in \cite{xu} and \cite{chen22}, we say that a law is heavy-tailed with indices $\alpha, \gamma, \beta, A $ and $ \tilde A$, with $ 0 \le \alpha < 2, \alpha \neq 1, \gamma > 0 ,$  if its distribution function $G$ has the form
\begin{equation*}
\begin{split}
 1- G(x) & = \frac{A}{|x|^\alpha} (1+\beta) + \frac{\tilde A}{|x|^{\alpha+\gamma}}(1+\beta), \qquad \qquad x\geq L, \\
 G(x) &  = \frac{A}{|x|^\alpha} (1-\beta) + \frac{\tilde A}{|x|^{\alpha+\gamma}}(1-\beta), \qquad\qquad x \leq -L,
\end{split}
\end{equation*}
for some $L>0$, 
where $\beta \in [-1,1]$ encodes the asymmetry in the distribution, $A, \tilde A >0$ are such that $|L|^{-\alpha} (A+ |L|^{-\gamma}\tilde A) \leq \frac{1}{2}$, and $\gamma >0$. 
In particular, such a law belongs to the domain of attraction of an $\alpha$-stable law (see \cite{feller}, IX.8, Theorem 1).  
\end{definition}
Following Chapter IX, Section 8 of \cite{feller} and Section 14 of \cite{sato}, one finds that the values of the parameters $A$ and $\beta$ appearing in Definition \ref{def:strongdomain} are related to the parameters $a_+ $ and $a_- $ of the L{\'e}vy measure $\nu^\alpha$ of the limit stable process by $a_+ = (1+\beta) \alpha A$ and $a_- =  (1- \beta) \alpha A$.
\begin{remark}
We can easily see that the $q$-th absolute moments of a law satisfying Definition \ref{def:strongdomain} are finite for $q<\alpha$ (see also Theorem 3 in \cite{GK}, Part III, Chapter 7, Section 35 for a more general result concerning the domain of attraction of stable laws). This will be often employed in the sequel.    
\end{remark}

After these preliminary definitions, we now introduce our finite particle system. To do so, let $(\pi^{i}(ds,dz,du))_{i\geq 1}$ be a family of i.i.d. Poisson measures on $\R_+\times \R_+\times \R$ having intensity measure $dsdz\nu(du)$, where, in the case $\alpha<1$, $\nu $ satisfies Definition \ref{def:strongdomain} with parameters $\alpha , \gamma, \beta, A , \tilde A$, and, in the case $\alpha >1$, $\nu = \mathcal L (\xi  - \E (\xi) ), $ where $ \xi $ is a real-valued random variable with distribution function $G$ satisfying Definition \ref{def:strongdomain} with parameters $\alpha , \gamma, \beta, A , \tilde A$. 

Consider also an i.i.d. family $(X_0^{i})_{i\geq 1}$ of $\R$-valued random variables independent of the Poisson measures, distributed according to some fixed probability measure $\nu_0 $ on $ (\R, {\mathcal B} ( \R) ).$  
In what follows we write $ ( \Omega, {\mathcal A}, {\mathbf P} ) $ for the basic probability space on which are defined all $ \pi^i $ and all $ X_0^i, $ and we  use the associated canonical filtration  
\begin{equation*}\label{eq:filtration}
 {\mathcal F}_t = \sigma \{  \pi^i  ( [0, s ] \times A\times B ), s \le  t , A \in {\mathcal B} ( \R_+ ) , B \in {\mathcal B} ( \R ) , i \geq 1 \} \vee \sigma \{ X^{ i }_0, i \geq 1 \}, \; t \geq 0.
\end{equation*}  
We will also use the projected Poisson random measures which are defined by 
\begin{equation*}\label{eq:projpi}
 \bar \pi^i (ds, dz) = \pi^i ( ds, dz, \R) , 
\end{equation*} 
having intensity $ds  dz .$ For any $ N \in \N^\ast, $ we consider a system of interacting particles  $ (X^{N, i}_{t}) ,  t\geq 0, \;  1 \le i \le N , $ evolving according to 
\begin{multline}\label{eq:micro:dyn}
X^{N, i}_t =  X^{i}_0 +   \int_0^t  b(X^{N, i}_s , \mu_s^N )  ds +  \int_{[0,t]\times\R_+} \psi (X^{N, i}_{s-}, \mu_{s-}^N )  \indiq_{ \{ z \le  f ( X^{N, i}_{s-}) \}} \bar\pi^i (ds,dz) \\
+ \frac{1}{{N}^{ 1/\alpha} }\sum_{ j \neq i } \int_{[0,t]\times\R_+\times\R  }u \indiq_{ \{ z \le  f ( X^{N, j}_{s-}) \}} \pi^j (ds,dz,du),
\end{multline} 
where 
$ \mu_t^N = \frac1N \sum_{i=1}^N \delta_{X_t^{N, i } } $
is the empirical measure of the system at time $t.$ In the above equation, $ b : \R \times {\mathcal P}_1 ( \R) \to \R ,$  $ \psi : \R  \times {\mathcal P}_1(\R)\to \R  $ and $ f : \R \to \R_+$ are measurable and bounded functions. In case $ \alpha > 1, $ we always suppose that $ \psi (\cdot ) \equiv 0, $ that is, there are no main jumps.

\begin{remark}\label{rem:finite_sum}
Notice that, since for each $i ,$ $\pi^i([0,t]\times[0,\|f\|_\infty]\times\R)$ is Poisson distributed with parameter $\|f\|_\infty t\times\nu(\R)=\|f\|_\infty t,$ the number of atoms of the measures $\pi^i$s in $[0,t]\times[0,\|f\|_\infty]\times\R $ is a.s. finite. Hence the integral in the r.h.s. of \eqref{eq:micro:dyn} is in fact a sum with a.s. a finite number of terms. 
\end{remark}

In what follows we will provide additional conditions on the functions $b, f $ and $ \psi $ that, together with our preliminary considerations, in particular Proposition \ref{prop:scaling}, allow to show that, as $ N \to \infty, $ the above particle system converges (in law) to an infinite exchangeable system $ (\bar X^i )_{i \geq 1 } $ solving
\begin{multline}\label{eq:limitsystem}
\bar X^{i}_t =  X^{i}_0 +   \int_0^t  b(\bar X^{ i}_s , \bar \mu_s )  ds +  \int_{[0,t]\times\R_+ } \psi (\bar X^{ i}_{s-}, \bar \mu_{s-} )  \indiq_{ \{ z \le  f ( \bar X^{ i}_{s-}) \}} \bar \pi^i (ds,dz) 
\\
+ \int_{[0,t] } \left( \bar \mu_{s-}(f)  \right)^{1/ \alpha}   d S_s^\alpha , 
\end{multline} 
where $S^\alpha,$ given by \eqref{eq:St}, is independent of the collection of Poisson random measures $ (\bar \pi^i (ds, dz))_{ i \geq 1 }$ and of the initial values $ (X^i_0)_{i \geq 1}$, and where $ \bar \mu_s = {\mathcal L} ( \bar X^1_s | S^\alpha_u, u \le s ). $

The main part of this article is devoted to the proof of (a quantified version of) the convergence of the finite system \eqref{eq:micro:dyn} to the limit system \eqref{eq:limitsystem}.

But, before doing so, we briefly discuss strong existence and uniqueness of the particle system and its associated limit system.

\subsection{Assumptions} 
To prove well-posedness of the particle and limit systems, we will only need the following Assumptions \ref{ass:b}--\ref{ass:nu0}:
\begin{assumption}\label{ass:b}
\begin{enumerate}[label=\alph*)]
\item $b$ is  bounded. \label{itm:bbdd}
\item There exists a constant $ C > 0$ such that for every $x,\,y \in \R$ and every $\mu,\,\tilde \mu \in \mathcal{P}_1(\R)$, it holds $ |b(x,\mu) - b(y,\tilde \mu) | \leq C \left( |x-y| +W_{1}(\mu,\tilde \mu )\right).$ \label{itm:bLip} 
\end{enumerate}
\end{assumption}

\begin{assumption}\label{ass:f}
\begin{enumerate}[label=\alph*)]
\item $f$ is lowerbounded by some strictly positive constant $ \underline f > 0$. \label{itm:flbdd} 

\item $f$ is bounded. \label{itm:fbdd}

\item $f$ is Lipschitz-continuous.  \label{itm:fLip}
\end{enumerate}
\end{assumption}

Recall that we assumed that there are no main jumps in case $ \alpha > 1.$ In case $ \alpha < 1, $ to deal with the main jumps, we also suppose that

\begin{assumption}\label{ass:psi}
\begin{enumerate}[label=\alph*)]
\item $ \psi$ is bounded. \label{itm:psibdd}
\item  There exists a constant $ C > 0$ such that for every $x,\,y \in \R$ and every $\mu,\,\tilde \mu \in \mathcal{P}_1(\R)$, it holds $ |\psi(x,\mu) - \psi(y,\tilde \mu) | \leq C \left( |x-y|  +W_{1}(\mu,\tilde \mu)\right).$ \label{itm:psilip}
\end{enumerate}
\end{assumption}

Recall that the initial positions $ (X^i_0)_{ i \geq 1}  $ are i.i.d., distributed according to some fixed probability measure $ \nu_0.$ We assume:
\begin{assumption}\label{ass:nu0}
$\nu_0 $ admits a finite first moment in case $ \alpha < 1 $  and a finite second moment in case  $ 1 < \alpha < 2. $
\end{assumption}

 To prove the convergence to the limit system we also need the Assumptions \ref{ass:f+}--\ref{ass:nu0+}: 
\begin{assumption}\label{ass:f+}
 $f\in{\cal C}^1$. 
\end{assumption}

In case $ \alpha < 1, $ we have to strengthen the Lipschitz assumptions \ref{ass:b}\ref{itm:bLip} and \ref{ass:psi}\ref{itm:psilip} and suppose additionally that 
\begin{assumption}\label{ass:bqLipp+psiqLip}
There exists a constant $ C > 0$ such that for some fixed  $ \am \in (0, \alpha ), $ for every $x,\,y \in \R$ and every $\mu,\,\tilde \mu \in \mathcal{P}_1(\R)$, $ |b(x,\mu) - b(y,\tilde \mu) | + |\psi(x,\mu) - \psi(y,\tilde \mu) | \leq C \left( d_{ \am }(x,y) +W_{d_{ \am}}(\mu,\tilde \mu)\right).$ \label{itm:bqLip}
\end{assumption}

In both cases we assume 
\begin{assumption}\label{ass:nu0+}
\begin{enumerate}[label=\alph*)]
\item
The initial distribution $\nu_0$ of each coordinate $X^i_0$ in \eqref{eq:micro:dyn} and \eqref{eq:limitsystem} admits a finite moment of order $(2\alpha) \vee 1$ if $\alpha<1$ and a finite moment of order $p$ for some $p>2$ if $1<\alpha<2$. 
\item  $\nu $ is heavy-tailed according to Definition \ref{def:strongdomain}, for some $ \alpha \in (0, 2 ) \setminus \{1\}$ and for some $ \gamma $ such that $ \alpha + \gamma \notin \{1, 2\}.$ Furthermore, $ \nu$ is centered if $ \alpha > 1$, that is, for $\alpha>1,$ $\nu = \mathcal L (\xi  - \E (\xi) ), $ where $ \xi $ is a real-valued random variable with distribution function $G$ as in Definition \ref{def:strongdomain}. 
\end{enumerate}
\end{assumption}

\begin{remark}
In what follows we will repeatedly use that Assumption \ref{ass:f} implies that for any two probability measures $ \mu , \tilde\mu  \in \mathcal{P}_1(\R) $ and for any $ p \geq 1,$ 
\begin{equation}\label{eq:controlmuf}
|( \mu ( f))^{1 / \alpha } - ( \tilde\mu ( f) )^{1/\alpha } |^p  \le C | \mu ( f) - \tilde\mu ( f) |^p  \le C  | \mu ( f) - \tilde\mu ( f) | \le C W_1 ( \mu , \tilde\mu ) , 
\end{equation}
where $C$ is a constant that may change from one occurrence to another. Here we used that $ z \mapsto z^{ 1/ \alpha } $ is Lipschitz on $ [ 0, \|f\|_\infty ] $ in case $ \alpha < 1 $ (on $ [ \underline f , \infty ] $ in case $ \alpha > 1$) together with the boundedness of $f.$ 
In the case $\alpha < 1 , $ we will also use that, for any $\alpha_- < 1$, 
\begin{equation}\label{eq:controlmuf2}
|( \mu ( f))^{1 / \alpha } - ( \tilde\mu ( f) )^{1/\alpha } |^p \le C | \mu ( f) - \tilde\mu ( f) |^p  \le C  | \mu ( f) - \tilde\mu ( f) | \le C W_{d_{\alpha_-}} ( \mu, \tilde\mu ) .
\end{equation}
This is derived similarly to \eqref{eq:controlmuf} and observing that the boundedness and Lipschitz continuity of $f$ imply that there exists $C>0$ such that for every $x,\,y\in \R$ it holds $|f(x) - f(y) |\leq C d_{\alpha_-}(x,y)$, so we can employ the Kantorovitch-Rubinstein duality on $f/C$ to conclude.  
\end{remark}

\subsection{Main results}\label{subsec:mainresults}
\begin{theorem}\label{thm:existfinite}
    Grant Assumptions  \ref{ass:b}\ref{itm:bLip}, \ref{ass:f}\ref{itm:fbdd}, \ref{ass:psi}  and \ref{ass:nu0}. 
    Then system \eqref{eq:micro:dyn} admits a unique strong solution. Furthermore, for all $N\in \N^\ast$, $i= 1,\ldots, N$ and $t>0$, $X^{N,i}_t$ has a finite moment of order $p$ for all $p<\alpha$. 
\end{theorem}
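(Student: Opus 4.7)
The plan is a standard pathwise interlacing construction for the piecewise deterministic Markov process, which is available here because the finite system has a bounded total jump intensity, followed by a crude moment bound obtained term by term from \eqref{eq:micro:dyn}.

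\textbf{Step 1 (Pathwise existence and uniqueness).} Since $f$ is bounded by $\|f\|_\infty$, I would begin by recalling (Remark~\ref{rem:finite_sum}) that the union of atoms of $\pi^1, \ldots, \pi^N$ in $[0, T] \times [0, \|f\|_\infty] \times \R$ is almost surely finite for every $T > 0$. Enumerate these atoms chronologically as $0 < T_1 < T_2 < \ldots$, necessarily with $T_k \to \infty$. On each interval $[T_{k-1}, T_k)$, the state $X^N$ solves the system of ordinary differential equations $\dot x^i = b(x^i, \mu^{N, x})$, $i = 1, \ldots, N$: by Assumption~\ref{ass:b} combined with the elementary bound $W_1(\mu^{N, x}, \mu^{N, y}) \leq N^{-1} \sum_j |x^j - y^j|$, the vector field on $\R^N$ is bounded and globally Lipschitz, so Cauchy--Lipschitz yields a unique flow. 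At each time $T_k$, I would update the state by the deterministic jump rule read off from the corresponding atom of $\pi^{j_k}$ as dictated by \eqref{eq:micro:dyn}. The resulting c\`adl\`ag adapted process is the unique strong solution.

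\textbf{Step 2 (Finite $p$-moment for $p < \alpha$).} Fix $p < \alpha$. By the heavy-tailed hypothesis on $\nu$ from Section~\ref{subsec:model}, $\int |u|^p \nu(du) < \infty$. Writing $N^j_t$ for the Poisson count of atoms of $\pi^j$ in $[0, t] \times [0, \|f\|_\infty] \times \R$ and $u_k^{(j)}$ for the i.i.d.\ $\nu$-distributed marks of those atoms, the equation \eqref{eq:micro:dyn} gives the crude bound
\[
|X^{N, i}_t| \leq |X^i_0| + \|b\|_\infty t + \|\psi\|_\infty N^i_t + N^{-1/\alpha} \sum_{j \neq i} \sum_{k=1}^{N^j_t} |u_k^{(j)}|.
\]
I would then raise to the $p$-th power, apply $(\sum_{\ell=1}^m a_\ell)^p \leq m^{(p-1)_+} \sum_{\ell=1}^m a_\ell^p$ (subadditivity when $p \leq 1$, Jensen when $p \geq 1$), and condition on the Poisson counts. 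Every factor then has finite expectation: $\E[|X^i_0|^p] < \infty$ by Assumption~\ref{ass:nu0}, the Poisson variables $N^i_t, N^j_t$ have moments of every order, and $\E[|u|^p] < \infty$. This yields $\E[|X^{N, i}_t|^p] < \infty$.

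\textbf{Where the work is.} At fixed $N$, there is no genuine obstacle: the PDMP interlacing uses only the boundedness of $f$, and the moment bound exploits only that $\nu$ has finite $p$-th moment for $p < \alpha$. The only mildly nontrivial point is to secure the global Lipschitz character of the drift on $\R^N$ when passing from the mean-field formulation to the coupled ODE, which is guaranteed by the $W_1$-Lipschitz hypothesis in Assumption~\ref{ass:b}. Sharper moment estimates (requiring the centering of $\nu$ and Bahr--Esseen type inequalities) only become relevant when one seeks bounds uniform in $N$, and those are the content of later results in the paper.
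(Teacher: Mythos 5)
Your proposal is correct and follows essentially the same route as the paper's Appendix proof: a recursive interlacing construction that solves the Lipschitz ODE between the a.s.\ finitely many jump times of the driving Poisson measures and applies the jump rule at each such time, followed by a crude term-by-term moment bound using the Poisson counts and $\int |u|^p\,\nu(du)<\infty$ for $p<\alpha$. (Like the paper's own proof, you invoke $\|b\|_\infty$ although the theorem formally grants only the Lipschitz part of Assumption~\ref{ass:b}; this is harmless, since linear growth plus Gronwall would serve equally well.)
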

The proof of Theorem \ref{thm:existfinite} is given in the Appendix section \ref{subsec:strongexistfin}. 
\begin{theorem}\label{theo:strongexistence}
Grant Assumptions \ref{ass:b}, \ref{ass:f}, \ref{ass:psi} and \ref{ass:nu0}. Then \eqref{eq:limitsystem} 
admits a unique strong solution. Furthermore, for all $t>0$, $\bar X_t$ has a finite moment of order $p$ for all $p<\alpha$. 
\end{theorem}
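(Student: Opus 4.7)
The plan follows the strategy hinted at in the introduction (after \cite{fournier}, as extended to the McKean--Vlasov setting in \cite{cavallazzi23}): reduce to a bounded-jump driver by truncating the big jumps of $S^\alpha$, then solve the conditional McKean--Vlasov equation by a Picard iteration on $\mathcal{G}_s$-adapted measure-valued flows, where $\mathcal{G}_s := \sigma(S^\alpha_u,\, u \leq s)$. Fix $K > 0$ and decompose $S^\alpha_t = S^{\alpha, K}_t + J^K_t$ with $J^K_t := \int_{[0,t]\times\{|z|>K\}} z\, M(ds, dz)$ collecting the big jumps (adjusted by the compensator when $\alpha > 1$); the residual $S^{\alpha,K}$ has jumps bounded by $K$ and therefore moments of every order on bounded time intervals. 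Denoting $T^K_1 < T^K_2 < \cdots$ the locally finite jump times of $J^K$, the jump of $\bar X^i$ at $T^K_{k+1}$ is prescribed by $(\bar\mu_{T^K_{k+1}-}(f))^{1/\alpha}\, \Delta S^\alpha_{T^K_{k+1}}$, so it suffices to build a unique strong solution on each interval $[T^K_k, T^K_{k+1})$ and concatenate.

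On $[0, T^K_1)$, given a candidate $\mathcal{G}_s$-adapted flow $m = (m_s)_{s \geq 0}$, the auxiliary SDE
\[ Y^i_t = X^i_0 + \int_0^t b(Y^i_s, m_s)\, ds + \int_{[0,t] \times \R_+} \psi(Y^i_{s-}, m_{s-}) \indiq_{\{z \leq f(Y^i_{s-})\}} \bar\pi^i(ds, dz) + \int_0^t (m_{s-}(f))^{1/\alpha}\, dS^{\alpha,K}_s \]
has Lipschitz coefficients in $Y^i$ (by Assumptions \ref{ass:b}\ref{itm:bLip}, \ref{ass:f}\ref{itm:fLip} and \ref{ass:psi}), bounded jump amplitudes, and admits a unique strong solution along standard Poisson-driven SDE arguments similar to those used in the proof of Theorem \ref{thm:existfinite}. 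Setting $\Phi(m)_s := \mathcal{L}(Y^1_s \mid \mathcal{G}_s)$, I would show that $\Phi$ is a strict contraction on a short interval $[0, \eta]$ (with $\eta$ depending on $K$): subtracting the SDEs associated to two inputs $m, \tilde m$ and using \eqref{eq:controlmuf}--\eqref{eq:controlmuf2} together with the Lipschitz assumptions, and applying a Burkholder--Davis--Gundy type bound for $S^{\alpha, K}$ (which has moments of every order), one obtains an estimate of the form
\[ \E\!\left[ \rho(Y^1_t, \tilde Y^1_t)^p \right] \leq C_K \int_0^t \E\!\left[ \rho(Y^1_s, \tilde Y^1_s)^p \right] ds + C_K \int_0^t \E\!\left[ W_\rho(m_s, \tilde m_s)^p \right] ds, \]
where the pair $(\rho, W_\rho)$ is chosen as $(|\cdot|, W_{\alpha_+})$ with $p = \alpha_+$ when $\alpha > 1$, and as $(d_{\am}, W_{d_{\am}})$ with $p = 1$ when $\alpha < 1$ (reflecting the remarks in the introduction about which norm is compatible with each regime of the stable integral). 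Gronwall's lemma then yields the contraction; iterating on successive intervals of length $\eta$ up to $T^K_1$, restarting at each big jump of $J^K$, and finally letting $K \to \infty$ using $T^K_k \to \infty$ a.s., produces the solution on $\R_+$.

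The main obstacle is ensuring that the Picard step operates genuinely at the level of $\mathcal{G}$-conditional laws and not only of marginals. Here one exploits that $(\bar\pi^i, X^i_0)_{i \geq 1}$ are i.i.d.\ and independent of $\mathcal{G}$, so the solutions $(Y^i)_{i \geq 1}$ are conditionally i.i.d.\ given $\mathcal{G}$; this lets the $\mathcal{G}_s$-conditional $L^p$ bound on $Y^1 - \tilde Y^1$ dominate $W_\rho(\Phi(m)_s, \Phi(\tilde m)_s)^p$ pathwise, after which taking expectations transfers the random-variable contraction to a contraction in the measure-flow variable $m$. For the moment bound, I would apply Gronwall to $\E[\|\bar X^i_t\|_{d_{\am}}]$ in the case $\alpha < 1$ (respectively to $\E[|\bar X^i_t|^{\alpha_+} \indiq_{\{t < T^K_k\}}]$ in the case $\alpha > 1$), using the classical fact $\E[|S^\alpha_t|^p] < \infty$ for every $p < \alpha$; letting $k \to \infty$ then yields $\E[|\bar X^i_t|^p] < \infty$ for all $p < \alpha$, as claimed.
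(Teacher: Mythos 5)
Your overall architecture --- truncate the big jumps of $S^\alpha$, run a Picard iteration with a Gronwall-type contraction estimate for the truncated equation, and patch the pieces together across the big-jump times --- is the same as the paper's (the paper iterates directly on the processes $X^{[n],K}$, with $\mu^{[n-1],K}_s = \mathcal{L}(X^{[n-1],K}_s\,|\,S^\alpha)$ fed into the $n$-th step, rather than first freezing a measure flow $m$ and contracting the map $m\mapsto\Phi(m)$; these are equivalent reformulations). Two steps of your sketch, however, do not go through as written.

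First, for $\alpha<1$ you propose to close the Gronwall loop in the metric $d_{\am}$ with the Wasserstein distance $W_{d_{\am}}$. Theorem \ref{theo:strongexistence} only grants Assumptions \ref{ass:b}, \ref{ass:f}, \ref{ass:psi} and \ref{ass:nu0}; the $d_{\am}$-Lipschitz continuity of $b$ and $\psi$ is Assumption \ref{ass:bqLipp+psiqLip}, which is imposed only for the convergence result. Under Assumption \ref{ass:b}\ref{itm:bLip} alone the drift increment is controlled by $|x-y|+W_1(\mu,\tilde\mu)$, and neither quantity is dominated by $d_{\am}(x,y)+W_{d_{\am}}(\mu,\tilde\mu)$, so your estimate $\E[\rho(Y^1_t,\tilde Y^1_t)]\le C_K\int_0^t(\cdots)\,ds$ does not close in that metric. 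The fix is exactly what the paper does: for $\alpha<1$ the driver is of bounded variation on $\{t<T_K\}$, so the plain $L^1$-norm and $W_1$ suffice, with \eqref{eq:controlmuf} handling the coefficient $(\mu(f))^{1/\alpha}$ and the boundedness of $f$ and $\psi$ handling the main-jump term.

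Second, the moment bound for $\alpha>1$. A Gronwall bound on $\E[|\bar X_t|^{\ap}\indiq_{\{t<T^K_k\}}]$ carries a constant that diverges as the truncation level grows (of order $e^{C t K^{\ap-\alpha}}$), and indeed $\E[|\bar X_t|^{\ap}]=\infty$ since $\ap>\alpha$; letting $k\to\infty$ therefore cannot deliver the claimed finiteness of the moments of order $p<\alpha$. The correct (and much shorter) argument is the a priori bound \eqref{eq:mufin}: because $b$, $\psi$ and $f$ are bounded, $\sup_{s\le t}|\bar X_s|$ is dominated by $|X_0|$, a deterministic term, a Poisson random variable, and $\sup_{s\le t}\left|\int_{[0,s]}(\bar\mu_{v-}(f))^{1/\alpha}\,dS^\alpha_v\right|$; this last stochastic integral has a bounded integrand and hence finite moments of every order $p<\alpha$, with no Gronwall argument needed.
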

The proof of Theorem \ref{theo:strongexistence} is given in Section \ref{sec:strongexist}. 

We may now state our main theorem. Remember that $ ( \Omega, {\mathcal A}, {\mathbf P} ) $ denotes the space on which all the $\pi_i$ and $X_0^{N,i}, i\in\N^*, N\in\N^*, $ are defined. Let $(X^{N, i})_{ 1 \le i \le N} $ be the unique strong solution of \eqref{eq:micro:dyn} driven by $(\pi_i)_{ 1 \le i \le N} $.

\begin{theorem}\label{thm:prop:chaos}
Grant Assumptions \ref{ass:b}--\ref{ass:nu0+}. Then the following holds for all  $t > 0 $ and $ i \geq 1.$ 
\begin{enumerate}
\item
If $ \alpha > 1, $ then 
$$ \lim_{N \to \infty} W_1  ( {\mathcal L}( X_t^{N, i } ) , {\mathcal L} ( \bar X_t^i ) ) \to 0 ,$$
and, if $ \alpha_{-} <  \alpha < 1, $ then
$$  \lim_{N \to \infty}  W_{d_{\alpha_{-} }} ( {\mathcal L}( X_t^{N, i } ) , {\mathcal L} ( \bar X_t^i ) ) \to 0 .$$

\item 
Moreover, for any $N\in \N^\ast, \delta\in (0,1)$ such that $2 \delta \norme{f}_\infty < 1,$ we can construct, on an extension of $(\Omega,\mathcal{A},\mathbf{P}),$ a one-dimensional strictly $\alpha$-stable process $S^{N,\alpha,\delta}$, independent of the initial positions $(X^i_0)_{i=1,\dots,N}$ and of $ (\bar \pi^i)_{i \geq 1},$ such that the following holds. 

If  $(\bar X^{N, \delta , i})_{ i \geq 1 } $ denotes the unique strong solution of the limit system \eqref{eq:limitsystem} driven by $S^{N,\alpha,\delta}$ and  
$ (\bar \pi^i)_{ i \geq 1},$  and writing $T^N_K \coloneqq \inf{\{t \geq 0 \, : \, |\Delta S^{N,\alpha , \delta}_t | > K \}}$, we have 
for all $ K > 0, $
\begin{enumerate}
\item for any $1<\alpha_- <\alpha<\ap<2,$ 
\begin{equation}\label{eq:finrate:alpha>1}
\E[\indiq_{\{t < T^N_K\}}| X^{N,i}_t - \bar X^{N,\delta, i}_t|] \leq  e^{C t \frac{K^{\ap-\alpha}}{\ap - \alpha}}  \left(  \left(N^{1-\frac{\am}{\alpha}}\delta \right)^{\frac{1}{\ap}} + r_t(N,\delta) + N^{-1/2} \right)^{1/\ap}, 
\end{equation}
where 
\begin{equation*}\label{eq:r(Ndelta):alpha>1}
r_t(N, \delta) \coloneqq N^{ \frac{1}{\am} \left( 1 + \frac{1}{\am}\right) \left( 1 - \frac{\am}{\alpha}\right) } \delta^{\frac{1}{ (\am)^2}} 
 + \lceil\frac{t}{\delta}\rceil  \delta^{\frac{1}{\alpha}}\left(   g(N \delta)+  (N\delta)^{- \frac{1}{2}} \right)  + \delta^{\frac{1  }{\alpha}} ,
\end{equation*}
and where the function $g$ is given in \eqref{eq:def:g} below;
\item for any $0<\am<\alpha<1,$
\begin{multline}\label{eq:finrate:alpha<1}
\E[\indiq_{\{t < T^N_K\}}  d_{\am}( X^{N,i}_t , \bar X^{N,\delta, i}_t) ] 
\le 
e^{C t \frac{K^{1-\alpha}}{1 - \alpha} } \times \\
\times  \left(  r_t(N, \delta)  +N^{ - 1/2} \indiq_{ \{1 > \am > \frac12\}} +
N^{- \am }\indiq_{\{ \am < \frac12\}}\right) ,
\end{multline} 
where 
\begin{equation*}\label{eq:r(Ndelta):alpha<1}
r_t(N,\delta) \coloneqq  N^{ 2\left(1 - \frac{\am}{\alpha}\right)}   \delta 
 + \lceil\frac{t}{\delta}\rceil \delta^{\frac{\am}{\alpha}} \left(   g(N \delta)+  (N \delta)^{-\frac{\am}{2}}   \right) + \delta^{\frac{\am}{\alpha}} , 
\end{equation*}
and where the function $g$ is given in \eqref{eq:def:g:alpha<1} below. 
\end{enumerate}
\end{enumerate}
\end{theorem}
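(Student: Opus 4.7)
My plan is to construct the coupling $S^{N,\alpha,\delta}$ explicitly on each time interval $[k\delta,(k+1)\delta)$ out of the collateral jumps of the finite particle system itself. On such an interval, freeze the empirical measure and the individual jump rates at their values at time $k\delta$. Conditionally on $\mathcal{F}_{k\delta}$, the aggregated number $P$ of collateral jumps emitted by the system over the interval is Poisson with mean $\delta \sum_{j=1}^N f(X^{N,j}_{k\delta}) = N\delta\,\mu^N_{k\delta}(f)$. In the strictly-stable toy case, Proposition \ref{prop:scaling} writes $\sum_{n=1}^P Y_n = P^{1/\alpha}\tilde Y_1$ with $\tilde Y_1$ strictly $\alpha$-stable and independent of $P$. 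In the general heavy-tailed case, I would first replace this sum by its strictly stable analogue using the quantified stable central limit theorem of \cite{chen22} in case $\alpha<1$ and of \cite{xu} in case $\alpha>1$ (producing the $g(N\delta)$ terms in $r_t$), then replace $P$ by its mean via a quantitative law of large numbers for Poisson variables (producing the $(N\delta)^{-1/2}$, resp. $(N\delta)^{-\am/2}$, contributions); after dividing by $N^{1/\alpha}$ and scaling by $\delta^{-1/\alpha}$ the increment becomes strictly $\alpha$-stable, and concatenating across $k$ yields the desired process $S^{N,\alpha,\delta}$.

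Next I would inject this $S^{N,\alpha,\delta}$ into the limit equation \eqref{eq:limitsystem} to define $\bar X^{N,\delta,i}$, and set up a Gronwall-type estimate for
\[
u_t \coloneqq \E\left[\indiq_{\{t<T^N_K\}}\,\norme{X^{N,i}_t - \bar X^{N,\delta,i}_t}_{d_{\am}}\right]
\]
(with $|\cdot|$ in place of $\norme{\cdot}_{d_{\am}}$ in the $\alpha>1$ case). The difference $X^{N,i}_t-\bar X^{N,\delta,i}_t$ decomposes into the drift discrepancy, the main-jump discrepancy (trivially zero when $\alpha>1$), a time-discretization error coming from freezing rates over intervals of length $\delta$, and the collateral-jump discrepancy between $N^{-1/\alpha}\sum_{j\neq i}\int u\,\indiq_{\{z\le f(X^{N,j}_{s-})\}}\pi^j(ds,dz,du)$ and $\int(\bar\mu_{s-}(f))^{1/\alpha}\,dS^{N,\alpha,\delta}_s$. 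Assumptions \ref{ass:b}\ref{itm:bLip}, \ref{ass:psi} and \ref{ass:bqLipp+psiqLip}, combined with \eqref{eq:controlmuf} and \eqref{eq:controlmuf2}, absorb the drift and main-jump parts into the Gronwall coefficient; the discretization contribution is of order $\delta^{1/\am}$ (resp. $\delta^{\am/\alpha}$) per step; and the collateral contribution is handled by the coupling construction together with the $W_1$ (or $W_{d_{\am}}$) distance between $\bar\mu^N_s$ and $\bar\mu_s$, for which I invoke the empirical-measure bounds of \cite{fournierguillin}.

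The decisive technical issue, and the main obstacle, is the mismatch of integrability exponents emphasized in the Comments on the norms: increments of $X^{N,i}$ lie in $L^{\am}$ only, whereas compensated small jumps of $S^{N,\alpha,\delta}$ require $L^{\ap}$ integrability for a BDG-type inequality to bite. On the event $\{t<T^N_K\}$ the jumps of $S^{N,\alpha,\delta}$ are bounded by $K$, and the standard moment inequality
\[
\E\!\left[\sup_{s\le t}\Bigl|\int_{[0,s]\times\{|z|\le K\}}h_u\,z\,\tilde M(du,dz)\Bigr|^{\ap}\right]\le C_{\ap}\,\frac{K^{\ap-\alpha}}{\ap-\alpha}\int_0^t\E|h_u|^{\ap}\,du
\]
produces both the prefactor $e^{Ct\,K^{\ap-\alpha}/(\ap-\alpha)}$ and the outer exponent $1/\ap$ in \eqref{eq:finrate:alpha>1}; in the case $\alpha<1$ one uses the analogous $L^1$ bound (no compensation, since the small jumps are absolutely integrable), yielding $e^{Ct\,K^{1-\alpha}/(1-\alpha)}$. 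Each passage between the $L^{\am}$ and $L^{\ap}$ regimes costs a H\"older exponent and explains the powers in $r_t(N,\delta)$. Finally, part~1 of the theorem follows from part~2 by first letting $N\to\infty$ with $\delta=\delta(N)\to 0$ chosen so that $r_t(N,\delta(N))\to 0$, and then sending $K\to\infty$, using that $\mathbf{P}(T^N_K\le t)\to 0$ uniformly in $N$ together with the $L^p$-bounds of Theorems \ref{thm:existfinite} and \ref{theo:strongexistence} for $p<\alpha$ to control the tail on $\{t\ge T^N_K\}$.
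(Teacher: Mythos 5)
Your proposal follows essentially the same route as the paper: the per-interval coupling via Proposition \ref{prop:scaling} combined with the quantified stable CLT of \cite{xu,chen22}, the Poisson law of large numbers, the truncation of big jumps yielding the $K^{\ap-\alpha}/(\ap-\alpha)$ prefactor, the H\"older passages between the $L^{\am}$ and $L^{\ap}$ regimes, the Fournier--Guillin bound for the empirical measure of the limit system, and the final $N\to\infty$ then $K\to\infty$ argument for part 1. The only difference is organizational: the paper routes the Gronwall estimate through an explicit auxiliary process $Y^{N,i}$ (Propositions \ref{prop:secondrepr}--\ref{prop:dist_Y_barX}) to separate the $L^1$/$d_{\am}$-controlled representation error from the $L^{\ap}$-controlled stochastic-integral discrepancy, whereas you perform the same bookkeeping in a single decomposition.
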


\begin{remark}
In case $ \alpha > 1, $ item 1. of the above theorem implies the weak convergence of $ {\mathcal L}( X_t^{N, i } )  $ to $  {\mathcal L} ( \bar X_t^i ) ,$ as $ N \to \infty, $ together with the convergence of the first moments (Theorem 6.9 of \cite{villani}). 
In case $ \alpha < 1, $ $ W_{d_{\alpha_{-}  }} $ is the Wasserstein-$1$ distance associated with the distance $ d_{\alpha_{-}}, $ see Definition 6.1 in \cite{villani}. 
Since continuity with respect to $d_{ \alpha_{-} } $ is equivalent to continuity with respect to the usual Euclidean distance $d_1, $ 
Theorem 6.9 of \cite{villani} together with item 1. imply the weak convergence of $ {\mathcal L}( X_t^{N, i } )  $ to $  {\mathcal L} ( \bar X_t^i ) ,$ as $ N \to \infty, $ together with the convergence of the first $d_{\alpha_{-}}-$moments.
\end{remark}

In the following we will drop the dependence of $S^{N,\alpha,\delta}$ and $ \bar X^{N, \delta, i } $ on $\delta$ for convenience of notation, as we will always take $\delta = \delta(N)$.  

\begin{remark}\label{rem:rate}
The rate of convergence stated in the above theorem depends on our choice of $ \am < \alpha ,$ $\ap>\alpha$ and on $ \delta$. To get an idea of the leading term in the error, taking formally $ \am \uparrow \alpha, \, \ap \downarrow\alpha$ and choosing $ \delta = \delta (N)$ such that all error terms are equilibrated, we obtain the following rate of convergence (see Appendix section \ref{sec:delta_choice}). 
\begin{enumerate}
\item For $ \alpha > 1 $ and $ \gamma + \alpha < 2, $ 
\[ N^{ - \frac{\gamma}{ \alpha^2 (1-\alpha+\gamma\alpha + \alpha^2)} } \indiq_{ \gamma < \alpha/2 } + N^{ - \frac{1}{2 \alpha\left(1-\alpha +\frac{3}{2}\alpha^2\right)} } \indiq_{  \gamma \in \left(\frac{\alpha}{2}, 2-\alpha\right) }    .\] 
\item For $ \alpha > 1 $ and  $\gamma + \alpha > 2,$
\[ N^{ - \frac{1}{2 \alpha\left(1-\alpha +\frac{3}{2}\alpha^2\right)} }  \indiq_{ \alpha \in ( 1, \frac43 )  } +N^{-\frac{2-\alpha}{\alpha^2 (1+\alpha)}}  \indiq_{\alpha > \frac{4}{3}} .\]
\item For $\alpha \in (0,\sqrt{3}-1)$ and $\gamma < \frac{\alpha^2}{2}$ or for $\alpha \in (\sqrt{3}-1,1)$ and $\gamma < 1-\alpha,  $
\[ N^{-\frac{\gamma}{\alpha+\gamma}}.\]
\item For $\alpha \in (0,\sqrt{3}-1)$ and $\gamma > \frac{\alpha^2}{2}$, 
\[  N^{-\frac{\alpha}{2+\alpha}} .\]
\item
For $\alpha \in (\sqrt{3}-1,1)$ and $\gamma > 1-\alpha,$ 
\[ N^{\alpha-1}.\]
\end{enumerate}

We discuss in Section \ref{sec:discussionrate} in detail  the different ingredients that constitute our rates in \eqref{eq:finrate:alpha>1} and \eqref{eq:finrate:alpha<1}. 

\end{remark}
\subsection{Organisation of the paper}
The rest of the paper is organized as follows. Section \ref{sec:strongexist} is devoted to the proof of pathwise uniqueness for the limit system. The proofs of well-posedness of the particle system and strong existence for the limit one are postponed to Appendix sections \ref{subsec:strongexistfin} and \ref{app:strong_ex_lim}, since they are based on standard techniques. 

In Section \ref{sec:coupling}, we provide a representation of the interaction term in \eqref{eq:micro:dyn} in terms of a stochastic integral with respect to an $\alpha$-stable process. This representation entails a time discretization of the particle system (see in particular Subsection \ref{subs:Y}) and relies on (a generalization of) the stable CLT and on previously obtained bounds in \cite{xu, chen22} (see Proposition \ref{lem:preliminary_results}, where the errors due to both the CLT and the rates of \cite{xu, chen22} are introduced). The overall error that we make by all these approximations will be controlled using the $L^1$-norm in the case $\alpha\in (1,2)$ and the norm induced by the distance $d_{\am}$ in the case $\alpha\in(0,1)$ (see $R^N_t$ in the statement of Theorem \ref{prop:representation_finite_syst}). In Section \ref{sec:coupling}, we will also provide an explicit construction of the limit process driving system \eqref{eq:limitsystem} (see Subsection \ref{subs:bridge}).  

Section \ref{sec:convergencelim} concludes the convergence proof employing some auxiliary results and intermediate useful representations for the particle and the limit systems.  
In particular, in that section additional error terms to the ones collected in Section \ref{sec:coupling} appear due to the need to approximate the conditional law of the limit system by its empirical measure. These errors are controlled in $L^{\ap}$-norm for $\alpha>1$ and in $L^1$-norm for $\alpha<1$ thanks to the properties of boundedness and Lipschitz continuity of the functions $f$ and $b$, and using results in \cite{fournierguillin}.

\section{Strong existence and uniqueness for the limit system}\label{sec:strongexist} 
Here we prove the well-posedness of the limit system \eqref{eq:limitsystem}. We consider one typical particle $ \bar X_t$ representing the limit system  \eqref{eq:limitsystem}. It  evolves according to
\begin{multline}\label{eq:limitequation1}
\bar X_t =  X_0 +   \int_0^t  b(\bar X_s , \bar \mu_s )  ds +  \int_{[0,t]\times\R_+ } \psi (\bar X_{s-},  \bar \mu_{s-}  )  \indiq_{ \{ z \le  f ( \bar X_{s-}) \}} \bar \pi (ds,dz) \\
+ \int_{[0,t] } \left( \bar \mu_{s-}(f)  \right)^{1/ \alpha}   d S_s^\alpha , 
\end{multline} 
where $X_0 \sim \nu_0, $ $ \bar \mu_s = {\mathcal L} ( \bar X_s | S^\alpha_u, u \le s ) ,$ $ \bar \pi $ is a Poisson random measure on $ \R_+ \times \R_+ $ having intensity $ ds dz,$  and where $ S^\alpha , \bar \pi  $  and $ X_0$ are independent. We use the above representation both for $ \alpha < 1 $ and $ \alpha > 1 $ keeping in mind that in the latter case, main jumps are excluded from our study, that is, $ \psi ( \cdot ) \equiv 0.$ See Remark \ref{rem:bigjumps} below. 
We also use the associated canonical filtration
\[ \bar {\mathcal F}_t \coloneqq \sigma  \{  \bar \pi  ( [0, s ] \times A ), s \le  t , A \in {\mathcal B} ( \R_+ )  \} \vee \sigma \{ X_0\} \vee \sigma \{ S_s^\alpha, s \le t \} , \; t \geq 0 .
\]

\begin{remark}\label{rem:33}
Let $p > 0.$ Notice that despite the presence of the integral against the stable process $ S^\alpha $ in \eqref{eq:limitequation1} above, $\bar \mu_t$ almost surely admits a finite moment of order $p$  for any $t \geq 0,$ whenever $ X_0 $ does so. This follows from the boundedness of $b,f$ and $ \psi, $ since 
\begin{equation}\label{eq:mufin}
  \sup_{ s \le t } 
 | \bar X_s| \le |X_0| + \| b\|_\infty t + \| \psi \|_\infty  \int_{[0,t]\times\R_+ }   \indiq_{ \{ z \le \|f\|_\infty  \}} \bar \pi (ds,dz)  + \sup_{ s \le t} \left| \int_{[0, s ]} \bar \mu_{v-}(f)^{ 1/\alpha} d S_v^\alpha  \right| .
 \end{equation} 
Since $ t \mapsto  \int_{[0,t]\times\R_+ }   \indiq_{ \{ z \le \|f\|_\infty  \}} \bar \pi (ds,dz) $ is a Poisson process of rate $ \|f\|_\infty ,$ possessing all moments, we deduce that 
\begin{equation}\label{eq:mufin2}
  \int_\R |x|^p  \bar\mu_t (dx) =    \E [  | \bar X_t|^p |\; S^\alpha ] \le  C_p (t) \left( \E |X_0|^p + 1  + \sup_{ s \le t} \left| \int_{[0, s ]} \bar \mu_{v-}(f)^{ 1/\alpha} d S_v^\alpha \right|^p \right)  < \infty 
 \end{equation}
almost surely. 
\end{remark}

\subsection{Pathwise uniqueness for the limit system }

\subsubsection{The case $\alpha>1$} 
 
We start discussing the case $ \alpha > 1.$ Fix some $ K > 0.$ By the L\'evy-It{\^o} decomposition (see \cite{applebaum}, Theorem 2.4.16), $S^\alpha$ admits the pathwise representation 
\begin{equation}\label{eq:S^alpha_repr}
    \begin{split}
        S^\alpha_t & = \int_{[0,t]\times B^\ast_K} z \tilde M(ds,dz) + \int_{[0,t]\times B^c_K} z M(ds,dz) - \int_{[0,t]\times B^c_K} z \nu^\alpha(dz) ds,
    \end{split}
\end{equation}
$t\geq 0$, where  $B^\ast_K \coloneqq \{ z \in \R \, : \, |z| \leq K\} \setminus \{0\}$ and where $\tilde M$ denotes the compensated jump measure. 

For any $K>0$ define
\begin{equation*}
    \begin{split}
        T_K \coloneqq \inf{\{t\geq 0\, : \, |\Delta S^\alpha_t|>K \}} ,
    \end{split}
\end{equation*}
that is, the first time the process $S^\alpha_t$ has a jump greater than $K$, $\Delta S^\alpha_t \coloneqq S^\alpha_t - S^\alpha_{t^-}.$   
Notice that for any finite $T,$ $ \lim_{K \to \infty  }   \mathbf{P}(T_K > T) = 1 $.

Consider now two solutions to the limit system \eqref{eq:limitsystem}, $X = (X_t)_{t\geq 0}$ and $\tilde X = (\tilde X_t)_{t\geq 0}$, with the same initial condition $X_0 = \tilde X_0$, and denote by $\mu_t$ and $\tilde \mu_t$ the conditional laws of $X_t$ and $\tilde X_t$ respectively given $S^\alpha$.  
Observing that on $\{t < T_K\}, $ the stochastic integral term corresponding to big jumps in \eqref{eq:S^alpha_repr} equals zero, we have 
\begin{multline*}
           \indiq_{\{T < T_K\}} \sup_{t \in [0,T]} |X_t - \tilde X_t|^2 
            \leq \\
            \le C \left[ T \int_0^T \indiq_{\{s < T_K\} } | b(X_s, \mu_s) - b(\tilde X_s, \tilde \mu_s) |^2 ds  + T M^2_K \int_0^T \indiq_{\{s < T_K\}} | \mu^{1/\alpha}_s (f) - \tilde \mu^{1/\alpha}_s(f) |^2 ds \right.\\
\left.  + \sup_{t\in [0,T]} \left|\int_{[0,t]\times B^\ast_K} \indiq_{ \{ s \leq T_K \} } [\mu^{1/\alpha}_{s-}(f) - \tilde\mu^{1/\alpha}_{s-}(f) ] z \tilde M(ds,dz) \right|^2   \right] ,
  \end{multline*}
where 
\begin{equation}\label{eq:def:C_K}
M_K \coloneqq \int_{B^c_K} z \nu^\alpha(dz).
\end{equation}

Using the Burkholder-Davis-Gundy inequality to deal with the stochastic integral term, we obtain
\begin{multline*}
        \E\left[\indiq_{\{T<T_K\}}  \sup_{t\in [0,T]}|X_t - \tilde X_t|^2 \right]  \leq C_{K,T} \left[ \int_0^T  \E\left[\indiq_{\{s< T_K\}}| b(X_s,\mu_s) - b(\tilde X_s, \tilde \mu_s)|^2\right] ds \right.\\ 
         \left. + \int_0^T \E\left[\indiq_{\{s< T_K\}}| \mu^{1/\alpha}_s(f) - \tilde\mu^{1/\alpha}_s(f)|^2 \right] ds \right] .    
\end{multline*}
Using the first inequality of \eqref{eq:controlmuf} with $p=2,$ Jensen's inequality and the Lipschitz continuity of $f,$ we obtain 
\begin{multline}\label{eq:similar}
  | (\mu_s(f))^{1/\alpha} - (\tilde\mu_s(f))^{1/\alpha}|^2 \leq C 
            \left| \mu_s(f) - \tilde\mu_s(f)\right|^2 = C\left( \E[f(X_s)  \, | \, S^\alpha] - \E[f(\tilde X_s)  \, | \, S^\alpha]\right)^2\\
   \leq C \E[ |f(X_s) - f(\tilde X_s)|^2 \, | \, S^\alpha ]  \leq C \E[  |X_s - \tilde X_s |^2 \, | \, S^\alpha ].
    \end{multline} 
We conclude that $ \indiq_{\{s< T_K\}}   | (\mu_s(f))^{1/\alpha} - (\tilde\mu_s(f))^{1/\alpha}|^2 \le    C \E[\indiq_{\{s< T_K\} }   |X_s - \tilde X_s |^2 \, | \, S^\alpha ], $ since $ \{s< T_K\}   $ is $ S^\alpha-$measurable.

Using this and Assumption \ref{ass:b}\ref{itm:bLip} together with the fact that $W_1(\mu_s,\tilde\mu_s) \le W_2(\mu_s,\tilde\mu_s),$ 
\begin{equation*}
    \begin{split}
        \E\left[\indiq_{\{T<T_K\}}\sup_{t\in [0,T]}|X_t - \tilde X_t|^2 \right] & \leq C_{K,T} \left[ \int_0^T  \E\left[\indiq_{\{s< T_K\}}\left(|X_s - \tilde X_s|^2 + W^2_2(\mu_s,\tilde\mu_s)\right)\right] ds\right.\\
        & \left. + \int_0^T \E\left[\E\left[\indiq_{ \{s < T_K\} } |X_s - \tilde X_s|^2 \, | \, S^\alpha\right]\right] ds
        \right] . 
    \end{split}
\end{equation*} 
By definition of the Wasserstein-$2$ distance, $W^2_2(\mu_s, \tilde\mu_s) \leq \E\left[|X_s - \tilde{X}_s|^2\, | \, S^\alpha\right] $, such that 
\begin{equation}\label{eq:uniqueness:gronwall}
    \begin{split}
        \E\left[\indiq_{\{T<T_K\}}\sup_{t\in [0,T]}|X_t - \tilde X_t|^2 \right] & \leq  C_{K,T} \int_0^T \E\left[\indiq_{ \{ s < T_K \} } \sup_{r \in [0,s]} |X_r - \tilde X_r|^2 \right] ds .
    \end{split}
\end{equation}
Notice that due to our a priori bound \eqref{eq:mufin}, $\E\left[\indiq_{ \{ s < T_K \} } \sup_{r \in [0,s]} |X_r - \tilde X_r|^2 \right]  $ is finite, such that the above inequality implies, by  Gronwall's inequality, that  
\begin{equation}
    \begin{split}
        \E\left[\indiq_{\{T < T_K\}}\sup_{t\in [0,T]}|X_t - \tilde X_t|^2 \right] = 0.
    \end{split}
\end{equation} 
Since  $\lim_{ K \to \infty} \indiq_{\{T < T_K\}} = 1 $ almost surely, the assertion follows by  monotone convergence.

\subsubsection{The case $ \alpha < 1$} 
We now discuss the case $ \alpha < 1.$ In this case $S^\alpha$ is of bounded variation such that we may use the $L^1$-norm instead of the $L^2$-norm. We have
\begin{multline*}
           \sup_{t \in [0,T]}\indiq_{\{t < T_K\}} |X_t - \tilde X_t| 
            \leq 
             \int_0^T \indiq_{\{s < T_K\} } | b(X_s, \mu_s) - b(\tilde X_s, \tilde \mu_s) | ds   \\
             +   \int_{[0,T ]\times\R_+ } \indiq_{ \{ s \leq T_K \} }   | \psi ( X _{s-} , \mu_{s-})  \indiq_{ \{ z \le  f (  X_{s-}) \}} -  \psi (\tilde  X _{s-} , \tilde \mu_{s-})  \indiq_{ \{ z \le  f ( \tilde X_{s-}) \}} | \bar \pi (ds,dz)\\
                +  \int_{[0,T ]\times \R_*} \indiq_{ \{ s \leq T_K \} } | \mu^{1/\alpha}_{s-}(f) - \tilde\mu^{1/\alpha}_{s-}(f) |  |z|  M(ds,dz)  .     
  \end{multline*}
Using Assumption \ref{ass:b}\ref{itm:bLip}, \eqref{eq:controlmuf} with $p=1$ and similar arguments as in \eqref{eq:similar}-\eqref{eq:uniqueness:gronwall}, now with the $L^1$-norm instead of the $L^2$-norm, we have that 
\begin{multline*}
 \E \left[ \int_0^T \indiq_{\{s < T_K\} } | b(X_s, \mu_s) - b(\tilde X_s, \tilde \mu_s) | ds   \right.\\
 \left. + \int_{[0,T ]\times \R_*} \indiq_{ \{ s \leq T_K \} } | \mu^{1/\alpha}_{s-}(f) - \tilde\mu^{1/\alpha}_{s-}(f) |  |z|  M(ds,dz)  \right] \\
 \le C_{K, T }  \E\left[\indiq_{\{T < T_K\}}\sup_{t\in [0,T]}|X_t - \tilde X_t| \right] . 
\end{multline*}
Finally we use that both $f $ and $ \psi $ are bounded and get 
\begin{multline*}
\E \left[  \int_{[0,T ]\times\R_+ } \indiq_{ \{ s \leq T_K \} }   | \psi ( X _{s-} , \mu_{s-})  \indiq_{ \{ z \le  f (  X_{s-}) \}} - \psi (\tilde  X _{s-} , \tilde \mu_{s-})  \indiq_{ \{ z \le  f ( \tilde X_{s-}) \}} | \bar \pi (ds,dz) \right] \\
\le   \E \int_0^T \indiq_{ \{ s \leq T_K \} } \left[ \| \psi\|_\infty |   f (  X_{s-}) -   f ( \tilde X_{s-}) |  + \|f\|_\infty  |  \psi ( X _{s-} , \mu_{s-})-  \psi (\tilde  X _{s-} , \tilde \mu_{s-}) | \right] ds.
\end{multline*}

We then use their Lipschitz continuity to conclude the proof as before using Gronwall's lemma.  

The existence of a strong solution of \eqref{eq:limitsystem} follows from a Picard iteration. The proof is postponed to the Appendix section \ref{app:strong_ex_lim}. $ \qed $

\begin{remark}\label{rem:bigjumps}
It is difficult to include \textit{main jumps} in the case $ \alpha > 1 .$ This is due to the fact that the natural way of controlling them is by using the $L^1$-norm, while the small jumps of the stable integral need to be controlled in $L^2 .$ Such a difficulty has already been remarked in the paper by Carl Graham, \cite{graham92}. It is possible to deal with main jumps in the presence of martingale terms which are not of bounded variation using other techniques, see e.g. \cite{ELL1} and \cite{ELL2}, where we applied a space transform and worked in $L^1, $ using the technique of \cite{graham92} who uses  the Burkholder-Davies-Gundy inequality in $L^1 $ to deal with the martingale term. In our present framework, the martingale is discontinuous such that this approach does not seem to be feasible. Therefore we have decided to disregard main jumps in the case $ \alpha > 1.$ 
\end{remark}

\section{Representing the interaction term of the finite particle system as a stochastic integral against a stable process}\label{sec:coupling}
\subsection{Main representation result}
To prove Theorem \ref{thm:prop:chaos}, we cut time into time slots of length $ \delta > 0.$ We will choose $ \delta = \delta (N )$ such that, as $N \to \infty, $ $\delta (N) \to 0$ and (at least) $ N \delta (N) \to \infty.$  The precise choice will be given in Section \ref{sec:delta_choice} below. 
 We write $\tau_s=k\delta$ for $k\delta < s \leq (k+1)\delta,\,  k\in\N, s > 0.$  A first step of the proof is a representation of  the interaction term 
\begin{equation}\label{eq:atn}
A_{t}^{N}\coloneqq \frac 1{N^{1/\alpha}}\sum_{j=  1}^N\int_{[0,t]\times\R_+\times\R }u \indiq_{ \{ z \le  f ( X^{N, j}_{s-}) \}} \pi^j (ds,dz,du)
\end{equation}
in terms of a stochastic integral against a stable process. Recall that  $r_t(N,\delta)$ has been defined in Theorem \ref{thm:prop:chaos}.  
\begin{theorem}\label{prop:representation_finite_syst}
Grant Assumptions \ref{ass:b}--\ref{ass:nu0+}. 
For any $N\in \N^\ast$ and $\delta \in (0,1)$ such that $ 2 \delta \|f\|_\infty < 1, $ there exists, on an extension of $(\Omega,\mathcal{A},\mathbf{P})$ which depends on $ N $ and $ \delta,$ a strictly $\alpha$-stable L{\'e}vy process $S^{N,\alpha}$, independent of the Poisson random measures $(\bar \pi^i)_{i=1}^N$ and of the initial conditions  $(X^{N,i}_0)_{i=1}^N$, such that 
\begin{equation}\label{eq:atnapprox}
A_t^N =  \int_{[0,t]} \left(\mu^{N}_{\tau_s}(f)\right)^{1/\alpha} d S^{N,\alpha}_s +  R^{N}_t , 
\end{equation}
where $ R_t^N $ is an error term for which we have the following control.
\begin{description}
\item {i)} 
If $1<\alpha<2$, then for $ \delta = \delta (N) $ sufficiently small,
\begin{equation*}
\E[ | R_t^{N } | ]  \leq C_{t} r_t(N, \delta).
\end{equation*}
\item{ii)}
If $ 0<\alpha < 1$, then for $ \delta = \delta (N) $ sufficiently small,  
\begin{equation*}
\E[  \norme{R_t^N}_{d_{\am}} ]  \leq  C_{t} r_t(N, \delta) .
\end{equation*}
\end{description}
In both cases, $C_{t}$ is a constant which is non-decreasing in $t$. 
\end{theorem}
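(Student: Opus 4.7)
My plan is to freeze the jump rates of the particle system on time slots of length $\delta$ and, on each slot, use Proposition \ref{prop:scaling} combined with a quantitative stable central limit theorem (drawn from \cite{xu} for $\alpha>1$ and \cite{chen22} for $\alpha<1$, collected in Proposition \ref{lem:preliminary_results}) to couple the normalized sum of collateral jumps with a single strictly $\alpha$-stable increment $\delta^{1/\alpha}\tilde Y_k$. Assembling these increments, and filling in the paths within each slot by independent stable bridges (Subsection \ref{subs:bridge}), then defines $S^{N,\alpha}$ as a strictly $\alpha$-stable L\'evy process on an extension of $(\Omega,\mathcal{A},\mathbf{P})$, independent of $(\bar\pi^i)_i$ and of $(X^{N,i}_0)_i$, and the integral on the right-hand side of \eqref{eq:atnapprox} will match $A^N_t$ modulo the controlled error $R^N_t$.

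\textbf{The three approximation steps on each slot.} On $[k\delta,(k+1)\delta)\subset[0,t]$, I will decompose the error as follows. \emph{(i) Time discretization.} Replacing $f(X^{N,j}_{s-})$ by $f(X^{N,j}_{k\delta})$ in \eqref{eq:atn} produces an error bounded via Assumption \ref{ass:f}\ref{itm:fLip} by the modulus of continuity of the trajectory on the slot; a priori $L^{\am}$ bounds from Theorem \ref{thm:existfinite} together with the drift/jump decomposition \eqref{eq:micro:dyn} will yield, after summation, the first $N$-dependent term of $r_t(N,\delta)$, where the exponents come from repeated H\"older steps needed to transfer $L^{\am}$-estimates on the particles into $L^1$ (resp.\ $d_{\am}$) estimates on $R^N_t$. \emph{(ii) Stable coupling.} With frozen rates, the number $P_k$ of collateral jumps in the slot is, conditionally on $\mathcal{F}_{k\delta}$, Poisson$(\Lambda_k)$ with $\Lambda_k := N\delta\,\mu^N_{k\delta}(f)$, and the heights are i.i.d.\ $\nu$-distributed and independent of $P_k$. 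Combining Proposition \ref{prop:scaling} with the quantitative stable CLT on an enlarged probability space will produce a standard strictly $\alpha$-stable $\tilde Y_k$, independent of $\mathcal{F}_{k\delta}$ and of $(\tilde Y_j)_{j<k}$, such that $N^{-1/\alpha}\sum_{n\le P_k}u_n = (P_k/N)^{1/\alpha}\tilde Y_k + \varepsilon^{\mathrm{clt}}_k$ with conditional error of order $\delta^{1/\alpha}g(\Lambda_k)$ in $L^1$ (resp.\ $\delta^{\am/\alpha}g(\Lambda_k)$ in $d_{\am}$). \emph{(iii) Poisson LLN.} Replacing $(P_k/N)^{1/\alpha}$ by $(\delta\mu^N_{k\delta}(f))^{1/\alpha}$ via Chebyshev on $\mathrm{Var}(P_k)=\Lambda_k$ together with Lipschitzianity of $z\mapsto z^{1/\alpha}$ on the range of $\Lambda_k/N$ (using $\underline f>0$ from Assumption \ref{ass:f}\ref{itm:flbdd} when $\alpha>1$) will introduce an error of order $\delta^{1/\alpha}(N\delta)^{-1/2}$ (resp.\ $\delta^{\am/\alpha}(N\delta)^{-\am/2}$ after one $\am$-H\"older step). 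Summing (ii) and (iii) over $\lceil t/\delta\rceil$ slots accounts for the bracketed middle term of $r_t(N,\delta)$, while the final incomplete slot $[\tau_t,t]$ produces the trailing $\delta^{1/\alpha}$ (resp.\ $\delta^{\am/\alpha}$) summand.

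\textbf{Assembly and main obstacle.} Since $\tau_s\equiv k\delta$ on slot $k$, the integral $\int_0^t(\mu^N_{\tau_s}(f))^{1/\alpha}dS^{N,\alpha}_s$ telescopes to $\sum_{k\delta\le t}(\mu^N_{k\delta}(f))^{1/\alpha}(S^{N,\alpha}_{(k+1)\delta\wedge t}-S^{N,\alpha}_{k\delta})$, which by construction coincides with the sum of the coupled per-slot approximations, so that \eqref{eq:atnapprox} and the claimed bound on $R^N_t$ will follow by combining the three error pieces above. The delicate point I anticipate is the bookkeeping of norms. For $\alpha>1$, centering of $\nu$ and the need to handle small stable jumps naturally push towards $L^{\ap}$ estimates, which must then be transported back to $L^1$ via Jensen while being combined with a priori bounds on $X^{N,i}$ that only live in $L^{\am}$; the iterated H\"older losses incurred here are precisely the origin of the unusual exponent $\frac{1}{\am}(1+\frac{1}{\am})(1-\frac{\am}{\alpha})$. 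For $\alpha<1$, the pseudo-metric $\|\cdot\|_{d_{\am}}$ is only sub-additive on $\R_+$, so $R^N_t$ must be decomposed into non-negative contributions before invoking \eqref{eq:dqsubadditive}, and the Poisson fluctuation has to be estimated directly in $d_{\am}$ rather than in $L^1$, which yields the $(N\delta)^{-\am/2}$ factor in place of the standard $(N\delta)^{-1/2}$.
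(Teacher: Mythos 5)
Your proposal is correct and follows essentially the same route as the paper: the decomposition into a time-discretization error, a per-slot stable coupling error (Proposition \ref{lem:preliminary_results}), a Poisson law-of-large-numbers error for $(P_k/N)^{1/\alpha}$, and boundary terms from the incomplete final slot, with the process $S^{N,\alpha}$ assembled from the slot increments via bridges as in Subsection \ref{subs:bridge}. Your diagnosis of the norm bookkeeping (martingale/BDG in $L^{\am}$ followed by Jensen for $\alpha>1$, sub-additivity of $\|\cdot\|_{d_{\am}}$ for $\alpha<1$) matches the origin of the exponents in $r_t(N,\delta)$.
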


The remainder of this section is devoted to the proof of Theorem \ref{prop:representation_finite_syst} which is the main tool to prove Theorem \ref{thm:prop:chaos}. The proof is given in Section \ref{subs:Y} and it uses  the following steps.
 
\textbf{Step 1.} We replace $A_t^N $ by its 
 version with time-discretized intensity
\begin{equation*}\label{eq:adtn}
A_t^{N,\delta}\coloneqq \frac 1{N^{1/\alpha}}\sum_{j =  1}^N\int_{[0,t]\times\R_+\times\R }u \indiq_{ \{ z \le  f ( X^{N, j}_{\tau_{s}}) \}} \pi^j (ds,dz,du) .
\end{equation*} 
This is done directly in the proof of Theorem \ref{prop:representation_finite_syst}, Section \ref{subs:Y}. The error made due to this time discretization is controlled thanks to a general error bound stated in Proposition \ref{prop:discretis0} below. 
Notice that this time discretization does only apply to the interaction term; we do not apply it to the whole process. 

\textbf{Step 2.} We show that any increment of $ A^{N,\delta } $ can be represented as the product of a conditional Poisson random variable (the total number of jumps per time interval) and the
increment of a stable process. This is the content of Proposition \ref{lem:preliminary_results}.

\textbf{Step 3.} We replace, in the proof of Theorem \ref{prop:representation_finite_syst}, Section \ref{subs:Y}, the suitably renormalized Poisson random variable by its expectation to conclude our proof.

\subsection {Representation of the discretized  increment of the interaction term  }
Let $0\leq s < t$ and define 
\begin{equation}\label{eq:def:A^N_s_t}
    \begin{split}
        A^{N}_{s,t} \coloneqq \frac{1}{N^{1/\alpha}}\sum_{j=1}^N \int_{]s, t]\times \R_+ \times \R} u \indiq_{\{z\leq f(X^{N,j}_{s})\}} \pi^j(dr,dz,du)
    \end{split}
\end{equation}
and
\begin{equation}\label{eq:def:P^N_s_t}
    \begin{split}
        P^{N}_{s,t} \coloneqq \sum_{j=1}^N \int_{]s, t]\times \R_+ \times \R}  \indiq_{\{z\leq f(X^{N,j}_{s})\}} \pi^j(dr,dz,du).
    \end{split}
\end{equation}

The following proposition combines a (conditional version of) Proposition \ref{prop:scaling} with a quantified version of the stable central limit theorem. Recall that we suppose that $ \nu $ is heavy-tailed according to Definition \ref{def:strongdomain} above. The parameter $ \gamma $ is the one of that definition. 

\begin{proposition}\label{lem:preliminary_results}
For all $N\in \N^\ast$ and all $0\leq s < t$, such that $ 2 (t-s) \|f\|_\infty < 1, $ 
there exists, on an extension 
of the original probability space  $(\Omega,\mathcal{A},\mathbf{P}),$ a strictly $\alpha$-stable random variable $S^{N,\alpha}_{s,t}$ which is independent of $P^N_{s,t},$ of $\F_s$ and of $( \bar \pi^i)_{i \geq 1},$  such that,
\begin{description}
\item{i)} if $1<\alpha<2$, 
\begin{equation}\label{eq:representation_L1_bound}
\E \left[ \left| A^N_{s,t} - \left( \frac{P^N_{s,t}}{N}\right)^{1/\alpha} S^{N,\alpha}_{s,t} \right| \right] \leq   C (t-s)^{\frac{1}{\alpha}}  g(N (t-s)), 
\end{equation}
where 
\begin{equation}\label{eq:def:g}
g(x) = \begin{cases}
x^{-\frac{\gamma}{\alpha}} & 0 < \gamma  < 2-\alpha \\
x^{-\frac{2-\alpha}{\alpha}} & \gamma > 2-\alpha , 
\end{cases}
\end{equation}

\item{ii)} if $\alpha<1$, for any $\am < \alpha$,
\begin{equation}\label{eq:representation_L1_bound:alpha<1}
\E \left[  d_{\am}\left(A^N_{s,t}, \left( \frac{P^N_{s,t}}{N}\right)^{1/\alpha} S^{N,\alpha}_{s,t}\right) 
\right] \leq C [ (t-s)^{\frac{\am}{\alpha}} g(N (t-s)) +   e^{- C N |t-s|} ] ,
\end{equation}
where
\begin{equation}\label{eq:def:g:alpha<1}
g(x) = 
x^{-1} + x^{-\frac{\gamma}{\alpha}} + x^{\frac{\alpha-1}{\alpha}}, \,  \gamma \neq 1-\alpha .
\end{equation}
\end{description}

Moreover, for any $N \in \N^\ast$ and $\delta\in (0,1)$ such that $2\delta\norme{f}_\infty < 1$, there exists, on an extension of the original probability space, an i.i.d. sequence $(S^{N,\alpha}_{k\delta, (k+1)\delta})_{k\geq 0}$ of strictly stable random variables such that, for all $k\geq 0$, $S^{N,\alpha}_{k\delta, (k+1)\delta}$ is independent of $\F_{k\delta}$ and of $(\bar\pi^{i})_{i\geq 1}$ and satisfies the bound \eqref{eq:representation_L1_bound} (if $\alpha>1$) and \eqref{eq:representation_L1_bound:alpha<1}  (if $\alpha<1$) with $s=k\delta$ and $t=(k+1)\delta$. 
\end{proposition}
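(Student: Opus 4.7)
\textbf{Proof plan for Proposition \ref{lem:preliminary_results}.}

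The plan is to exploit the conditional Poissonian structure of $A^N_{s,t}$ together with a quantified stable central limit theorem. First, I would condition on $\F_s$: since the rates $f(X^{N,j}_s)$ are then frozen, the superposition, over $j$, of the atoms of $\pi^j$ lying in $]s,t]\times[0,f(X^{N,j}_s)]\times\R$ is a Poisson point process on $\R_+\times\R_+\times\R$ of intensity $(t-s)\bigl(\sum_j f(X^{N,j}_s)\bigr)\,\nu(du)=N(t-s)\mu^N_s(f)\,\nu(du)$. In particular $P^N_{s,t}\mid\F_s$ is Poisson with parameter $\lambda_s:=N(t-s)\mu^N_s(f)\in[\underline f\,N(t-s),\|f\|_\infty N(t-s)]$, and conditionally on $\F_s$ and $\{P^N_{s,t}=P\geq 1\}$, the corresponding $u$-marks $U_1,\dots,U_P$ are i.i.d.\ with law $\nu$, independent of $\F_s$ and of the $(r,z)$-marks. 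Writing $X_P:=P^{-1/\alpha}\sum_{n=1}^PU_n$ (and $X_0:=0$) yields the key identity
\[
A^N_{s,t}=\bigl(P^N_{s,t}/N\bigr)^{1/\alpha}X_{P^N_{s,t}},
\]
which is the heavy-tailed analogue of Proposition \ref{prop:scaling}.

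Next I would invoke the quantified stable CLT of \cite{xu} (for $1<\alpha<2$, in $W_1$) and \cite{chen22} (for $0<\alpha<1$, in $W_{d_{\am}}$): since $\nu$ satisfies Definition \ref{def:strongdomain}, there is a strictly $\alpha$-stable law $\mu_\ast$ (with skewness determined by $\beta$ and scale by $A$) such that $W_1(\mathcal{L}(X_P),\mu_\ast)\leq Ch(P)$ (resp.\ $W_{d_{\am}}(\mathcal{L}(X_P),\mu_\ast)\leq Ch(P)$), where $h$ is the explicit rate appearing in \eqref{eq:def:g} and \eqref{eq:def:g:alpha<1}. I then enlarge $(\Omega,\mathcal A,\mathbf P)$ by an auxiliary uniform $V$ independent of $\F_\infty\vee\sigma(\bar\pi^i:i\geq 1)$ and, for each $p\geq 1$, pick a measurable $\Phi_p:\R\times[0,1]\to\R$ realizing the optimal coupling so that $(X_p,\Phi_p(X_p,V))$ attains the above Wasserstein bound, while for $p=0$ I set $\Phi_0(\,\cdot\,,V):=F_{\mu_\ast}^{-1}(V)$. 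Defining $S^{N,\alpha}_{s,t}:=\Phi_{P^N_{s,t}}(X_{P^N_{s,t}},V)$, one sees that $\mathcal L(S^{N,\alpha}_{s,t}\mid\F_s,P^N_{s,t},(\bar\pi^i)_{i\geq 1})=\mu_\ast$ deterministically, since for every fixed $p$ the coupling $\Phi_p(X_p,V)$ has marginal $\mu_\ast$, and the projections $\bar\pi^i$ do not see the $u$-marks carrying $X_p$. This simultaneously delivers strict $\alpha$-stability and the required independences.

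The error bound then follows from
\[
\bigl|A^N_{s,t}-(P^N_{s,t}/N)^{1/\alpha}S^{N,\alpha}_{s,t}\bigr|=(P^N_{s,t}/N)^{1/\alpha}\bigl|X_{P^N_{s,t}}-S^{N,\alpha}_{s,t}\bigr|\,\indiq_{\{P^N_{s,t}\geq 1\}}.
\]
Taking expectation and the conditional expectation of the second factor given $P^N_{s,t}$ (bounded by $Ch(P^N_{s,t})$ in the appropriate norm by optimality of the coupling), it remains to compute $\E\bigl[(P^N_{s,t})^{1/\alpha}h(P^N_{s,t})\indiq_{P^N_{s,t}\geq 1}\bigr]$ for a Poisson variable of parameter $\lambda_s\asymp N(t-s)$. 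Standard Poisson moment estimates $\E[P^\beta\indiq_{P\geq 1}]\leq C(1+\lambda^\beta)$, valid for arbitrary $\beta\in\R$ after separating the regime $\{P\geq\lambda/2\}$ from the exponentially rare event $\{P<\lambda/2\}$, transform each of the cases of $h$ in \eqref{eq:def:g} into the desired shape $(t-s)^{1/\alpha}g(N(t-s))$, and similarly for \eqref{eq:representation_L1_bound:alpha<1}, using $\am<\alpha$. The extra term $e^{-CN(t-s)}$ in the $\alpha<1$ case arises precisely from the event $\{P^N_{s,t}\text{ small}\}$, on which the $d_{\am}$-distance between $X_P$ and $\mu_\ast$ cannot be quantitatively controlled, but which has exponentially small probability thanks to the lower bound $\lambda_s\geq \underline f\,N(t-s)$.

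Finally, for the i.i.d.\ sequence $(S^{N,\alpha}_{k\delta,(k+1)\delta})_{k\geq 0}$ claimed in the second assertion, I would run the construction above on each slot $[k\delta,(k+1)\delta]$ using an i.i.d.\ sequence of auxiliary uniforms $(V_k)_{k\geq 0}$ on the extension. Since, conditionally on $\F_{k\delta}$, each $S^{N,\alpha}_{k\delta,(k+1)\delta}$ has the fixed law $\mu_\ast$, independence across $k$ and from $\F_{k\delta}$ and $(\bar\pi^i)_{i\geq 1}$ follows automatically. The subtlest point of the whole argument is step two: transferring a Wasserstein-distance bound into an actual \emph{random} coupling on an enlarged space while preserving all the required independences. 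This is what forces both the introduction of the auxiliary uniform $V$ and the indexing of the coupling map $\Phi_p$ by the realized value of $P^N_{s,t}$; the remaining analysis is a careful bookkeeping of Poisson moments against the explicit rates of \cite{xu,chen22}.
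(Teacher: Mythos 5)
Your proposal follows essentially the same route as the paper's proof: the conditional Poisson representation $A^N_{s,t}=(P^N_{s,t}/N)^{1/\alpha}X_{P^N_{s,t}}$, the quantified stable CLT rates of \cite{xu} and \cite{chen22}, the transfer of the Wasserstein bound to a genuine coupling via an auxiliary uniform $V$ and a measurable coupling map indexed by the realized value of $P^N_{s,t}$ (the paper cites Lemma 3.12 of \cite{prodhommeArxiv} for this), and a Poisson moment estimate (the paper uses Jensen's inequality with the concavity of $x\mapsto x^{1/\alpha}g(x)$ where you split on $\{P\geq\lambda/2\}$, and it likewise isolates a Chernoff-small bad event in the $\alpha<1$ case to produce the exponential term). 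The argument is correct and matches the paper's in all essential respects.
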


\begin{proof}
Using basic properties of Poisson random measures, there exists an i.i.d. sequence of random variables $U^{(l)}_{s,t}, l \geq 1, $ distributed as $ \nu, $ independent of $P^N_{s,t},$ of $\F_s$ and of $ \bar \pi^i , i \geq 1, $ such that almost surely, 
\[
  A^{N}_{s, t} =\frac{1}{N^{1/\alpha}}  \tilde S_{P^{N}_{s, t}} , \mbox{ where we put } \tilde S_n \coloneqq \sum_{l=1}^n  U^{(l)}_{s,t}   ,
\]
which we rewrite as 
\begin{equation}\label{eq:14}
A^{N}_{s, t} =  \left(\frac{P^{N}_{s, t}}{N}\right)^{1/\alpha} \frac{1}{(P^{N}_{s, t})^{1/\alpha}} \tilde S_{P^{N}_{s, t}}.
\end{equation}

Since $\nu$ belongs to the domain of attraction of an $\alpha$-stable law,  the weak limit, as $n\to +\infty$, of the sequence $\left(\frac{1}{n^{1/\alpha}}\tilde{S}_n\right)_{n\geq 1},$ is the law of an $\alpha-$stable random variable $ S^\alpha.$

\sloppy Let $\mu_n$ be the optimal coupling minimizing $W_1\left(\frac{1}{n^{1/\alpha}}\tilde{S}_n, S^\alpha\right)$ if $\alpha>1$ and $W_{d_{\am}}\left(\frac{1}{n^{1/\alpha}}\tilde{S}_n, S^\alpha\right)$  for $\alpha<1,$ for any fixed $ n \geq 1$.  
These optimal couplings exist by Theorem 4.1 in \cite{villani}. Denote by $\mu^{(1)}_n$ their first marginal in both cases.   
Then a general coupling lemma, stated e.g. in Lemma 3.12 of the arXiv version  \cite{prodhommeArxiv} of  \cite{Prodhomme}, see also Section \ref{app:coupling} below, implies that there exists a measurable function $G_n : \R \times (0,1) \to \R$ such that $ \left( X_n, G_n\left(X_n, V\right)\right) \sim \mu_n $ whenever $(X_n, V) \sim \mu^{(1)}_n \otimes U(0,1)$. In particular, this holds for $X_n = \frac{1}{n^{1/\alpha}}\tilde S_n$ and $V \sim U(0,1)$ independent of it.   
Since the second marginal of $\mu_n$ is $\mathcal{L}(S^\alpha)$, this means that 
\begin{equation}\label{eq:G_n=S^alpha}
G_n\left(\frac{1}{n^{1/\alpha}}\tilde S_n, V\right) \sim S^\alpha .
\end{equation}
We also let for $n=0$, $ G_0 ( x, V ) $ be a random variable that does not depend on $x $ and which is distributed as $ G_0(x, V)  \sim S^\alpha.$ 
Let us finally introduce $ G : \N \times \R \times (0, 1 ) \to \R $ by putting 
\[ G (n, x, v ) \coloneqq \sum_{l=0}^\infty 1_{\{ n = l \} } G_l ( x, v ) .\]   

We now define a first extension of our original probability space by adding to it a uniform random variable $V\sim U(0,1),$ independent of the Poisson random measures $(\pi^i)_i$ and of the random variables $ (\tilde S_n)_n. $ \footnote{Note that this first extension does not depend on $ s, t  ,$ nor on $ N , \delta.$}  
Then, for any fixed $N\in \N^*$ and $0\leq s < t$, we define 
\begin{equation}\label{eq:def:S^Nalpha_s_t}
S^{N,\alpha}_{s,t} \coloneqq G \left( P^{N}_{s, t},  \frac{1}{(P^{N}_{s, t})^{1/\alpha}} \tilde S_{P^{N}_{s, t}} , V \right) . 
\end{equation}
We now prove that $S^{N,\alpha}_{s,t}$ is an $\alpha$-stable variable, which is independent of  $P^N_{s,t},$ of $\F_s$ and of $ \bar \pi^i , i \geq 1. $   
Let $ A \in \sigma \{ P^N_{s,t}\} \vee \sigma \{  \bar \pi^i , i \geq 1\} \vee \F_s . $ We have 
\begin{eqnarray*}
\E[\phi(S^{N,\alpha}_{s,t}) \indiq_A ] & =& \E \left[ \phi \left( G \left( P^{N}_{s, t},  \frac{1}{(P^{N}_{s, t})^{1/\alpha}} \tilde S_{P^{N}_{s, t}} , V \right)\right) \indiq_A \right] \nonumber \\
& = &\sum_{n=0}^\infty \E\left[\phi\left(G_n\left( \frac{1}{n^{1/\alpha} } \tilde S_n , V \right) \right) \indiq_{\{P^{N}_{s,t} = n \}}  \indiq_A \right] \nonumber \\
& = &\sum_{n=0}^\infty \E\left[\phi\left(G_n\left( \frac{1}{n^{1/\alpha} } \tilde S_n , V \right) \right)\right] \mathbf{P}(\{P^{N}_{s,t} = n \} \cap  A ) \nonumber \\
&=&  \sum_{n=0}^\infty \E\left[\phi\left(S^\alpha\right)\right] \mathbf{P}(\{ P^{N}_{s, t} = n\} \cap A ) 
= \E\left[\phi\left(S^\alpha\right)\right] \mathbf{P} ( A )  ,
\end{eqnarray*}
for any measurable bounded test function $\phi$. Here, we used that $ ((\tilde S_n)_n, V)  $ is independent of $P^N_{s,t},$ of $\F_s$ and of $ \bar \pi^i , i \geq 1, $ to obtain the third equality. Thus, $S^{N,\alpha}_{s,t}$ is indeed an $\alpha$-stable variable, which is independent of  $P^N_{s,t},$ of $\F_s$ and of $ \bar \pi^i , i \geq 1.$ 

 Finally, the bounds in \eqref{eq:representation_L1_bound} and \eqref{eq:representation_L1_bound:alpha<1} follow using the representation \eqref{eq:14} for $A^N_{s,t}$.

\noindent{\textbf{Case} $\pmb{\alpha>1}.$}
Employing that, conditionally on $\{P^N_{s,t}=n\}$, $G\left(P^N_{s,t}, \frac{\tilde{S}_{P^N_{s,t}}}{(P^N_{s,t})^{1/\alpha}}, V\right) = G_n\left(\frac{1}{n^{1/\alpha}}\tilde S_n, V \right) $, that $\mathcal{L}\left(\frac{\tilde S_n}{n^{1/\alpha}},G_n\left(\frac{1}{n^{1/\alpha}}\tilde S_n, V\right) \right)$ is the optimal coupling $\mu_n$ for the $W_1$ distance and  \eqref{eq:G_n=S^alpha},
 \begin{multline}\label{eq:proof:representation_L1_bound}
         \E \left[ \left| A^N_{s,t} - \left( \frac{P^N_{s,t}}{N}\right)^{1/\alpha} S^{N,\alpha}_{s,t} \right| \right]  = \sum_{n > 0} \left(\frac{n}{N}\right)^{1/\alpha} \E\left[\left| \frac{\tilde S_n}{n^{1/\alpha}}- G_n\left(\frac{\tilde S_n}{n^{1/\alpha}}, V\right)\right|\right] \mathbf{P}(P^{N}_{s, t}=n)\\
             = \frac{1}{N^{1/\alpha}}\sum_{n >  0} n^{1/\alpha} W_1\left( \frac{\tilde S_n}{n^{1/\alpha}}, G_n\left(\frac{\tilde S_n}{n^{1/\alpha}}, V\right) \right) \mathbf{P}(P^{N}_{s, t}=n) \\
             = \frac{1}{N^{1/\alpha}} \sum_{n > 0} n^{1/\alpha} W_1\left( \frac{\tilde S_n}{n^{1/\alpha}}, S^\alpha \right) \mathbf{P}(P^{N}_{s, t}=n) .
    \end{multline}     
Based on Example 2 in \cite{xu}, for any $n\in \N^*$, $W_1\left(\frac{\tilde S_n}{n^{1/\alpha}},S^\alpha\right) \leq C g(n)$, for a positive constant $C,$ where the function $g$ is given in \eqref{eq:def:g} 
and satisfies 
that  $x \mapsto x^{1/\alpha}g(x)$ is concave on $ \R_+$ for any choice of $\gamma > 0 .$ As a consequence,  we can apply Jensen's inequality in  \eqref{eq:proof:representation_L1_bound} to obtain
\begin{multline*}
 \frac{1}{N^{1/\alpha}} \sum_{n > 0} n^{1/\alpha} W_1\left( \frac{\tilde S_n}{n^{1/\alpha}}, S^\alpha \right) \mathbf{P}(P^{N}_{s, t}=n) \leq \frac{C}{N^{1/\alpha}}\sum_{n > 0} n^{1/\alpha} g(n)  \mathbf{P}(P^{N}_{s, t}=n) \\
= \frac{C}{N^{1/\alpha}} \E[(P^N_{s,t})^{1/\alpha} g(P^N_{s,t})] \leq \frac{C}{N^{1/\alpha}} \E[P^{N}_{s,t}]^{1/\alpha} g(\E[P^N_{s,t}])
 \leq C \|f\|_\infty^{1/ \alpha}  |t-s|^{1/\alpha} g(\underline f N |t-s|) ,
\end{multline*}
where we have employed that, conditionnally on $\F_{s}$, $ P^{N}_{s, t} \sim Pois((t-s) N \mu^N_{s}(f))$ and Assumption \ref{ass:f}\ref{itm:fbdd} to upperbound $\E[P^{N}_{s,t}]$ and \ref{ass:f}\ref{itm:flbdd} to upperbound $ g(\E[P^N_{s,t}])$ (using that $g$ is decreasing). 

\noindent \textbf{Case} $\pmb{\alpha<1}.$
In this case, we introduce $G \coloneqq \left\{\frac{1}{2} |t-s| N \underline{f}  \le P^{N}_{s, t} \leq 2  |t-s| N \|f\|_\infty  \right\}$.
It is easy to show, using Chernoff bounds for Poisson random variables, that $\mathbf{P}(G^c\, | \, \mathcal{F}_{s}) 
\le c e^{-C N |t-s|},$ for some constants $c, C > 0 $ not depending on $N.$ Then we start from 
\begin{eqnarray*}
& & \E\left[ d_{\am}\left(A^N_{s, t}, \left(\frac{P^{N}_{s, t }}{N}\right)^{1/\alpha} S^{N,\alpha}_{s, t}\right)\right] \\
& & \leq \E\left[ \left( \left(\frac{P^{N}_{s, t }}{N}\right)^{1/\alpha} \vee \left(\frac{P^{N}_{s, t }}{N}\right)^{\am/\alpha} \right) 
 d_{\am}\left(
\frac{1}{(P^{N}_{s, t })^{1/\alpha}} \tilde S_{P^{N}_{s, t }},  S^{N,\alpha}_{s, t} \right)  \indiq_G \right] \\
 &&\quad \quad + \E\left[  \left|A^N_{s, t} - \left(\frac{P^{N}_{s, t }}{N}\right)^{1/\alpha} S^{N,\alpha}_{s, t} \right|^{\am}  \indiq_{G^c} \right] \eqqcolon T_1 + T_2.
\end{eqnarray*}
To control $ T_2,$ we use the sub-additivity of $ x \mapsto x^\am $ and H\"older's inequality with $ q ' /\am $ where $ q' \in ] \am, \alpha [ $ and associated conjugate exponent $ p'$ such that $ 1/ p' + \am/ q' = 1, $ to upper bound 
\begin{eqnarray*}
T_2 &\le& \left(\E [  |A^N_{s, t}|^{q'} ]\right)^{ \am/ q'} \mathbf{P} ( G^c )^{1/p'} + \left(\E [ | S^{N,\alpha}_{s, t} |^{q'} ]\right)^{\am/ q'} \left(\E \left[ \left(\frac{P^{N}_{s, t }}{N}\right)^{{\am p'}/\alpha} \indiq_{G^c} \right] \right)^{1/ p'} 
\\
&\le & \left( C N^{ 1 - q'/\alpha} |t-s| \right)^{\am/q'}  \mathbf{P} ( G^c )^{1/p'}  + C \left(\E \left[ \left(\frac{P^{N}_{s, t }}{N}\right)^{{\am p'}/\alpha} \indiq_{G^c} \right]\right)^{1/ p'}  \\
&\le & c e^{-\tilde C N |t-s|},
\end{eqnarray*}
where $C$ is an upper bound on $(\E [ | S^{N,\alpha}_{s, t} |^{q'} ])^{\am/ q'}$ and where we have used H\"older's inequality once more to upper bound $\E ( \left({P^{N}_{s, t }}/{N}\right)^{{\am p'}/\alpha} \indiq_{G^c} ).$ 
To control $T_1,$ notice that we have $ \frac{P^{N}_{s, t }}{N} \le 1 $ on $ G$ (recall that we assumed that $ 2 \|f\|_\infty (t-s) \le 1 $), such that 
\[  \left(\frac{P^{N}_{s, t }}{N}\right)^{1/\alpha} \vee \left(\frac{P^{N}_{s, t }}{N}\right)^{\am/\alpha} = \left(\frac{P^{N}_{s, t }}{N}\right)^{\am/\alpha}.\]
Therefore, using analogous arguments as in the case $\alpha>1$,  
\begin{eqnarray*}
& & \E\left[ 
 \left( \left(\frac{P^{N}_{s, t }}{N}\right)^{1/\alpha} \vee \left(\frac{P^{N}_{s, t }}{N}\right)^{\am/\alpha} \right) 
 d_{\am}\left(\frac{1}{(P^{N}_{s, t })^{1/\alpha}} \tilde S_{P^{N}_{s, t }},  S^{N,\alpha}_{s, t} \right) \indiq_G
\right] \\
& & = \E\left[ 
  \left(\frac{P^{N}_{s, t }}{N}\right)^{\am/\alpha} 
 d_{\am}\left(\frac{1}{(P^{N}_{s, t })^{1/\alpha}} \tilde S_{P^{N}_{s, t }} ,  S^{N,\alpha}_{s, t} \right) \indiq_G
\right] \\
& & \leq \frac{1}{N^{\am/\alpha}}\sum_{n\geq 0} n^{\am/\alpha} \E\left[  d_{\am}\left(\frac{\tilde S_{n}}{n^{1/\alpha}}  ,  G_n\left(\frac{\tilde S_{n}}{n^{1/\alpha}}, V\right) \right) \right] \mathbf{P}( P^{N}_{s, t } = n) \\
& & = \frac{1}{N^{\am/\alpha}} \sum_{n\geq 0} n^{\am/\alpha} d_{W_\am}\left(\frac{\tilde S_n}{n^{1/\alpha}}, S^\alpha \right) \mathbf{P}( P^{N}_{s, t } = n) .
\end{eqnarray*}
Based on Example 2 in \cite{chen22}, for any $n\in \N^*,$ 
$d_{W_\am}\left(\frac{\tilde S_n}{n^{1/\alpha}}, S^\alpha \right) \leq C g(n)$, where $g$ is given in
 \eqref{eq:def:g:alpha<1}
and satisfies that  $x\mapsto x^{\am/\alpha} g(x)$ is concave on $\R_+$ for any $\gamma>0$. Then, as before, we apply Jensen's inequality to conclude the proof of \eqref{eq:representation_L1_bound:alpha<1}: 
\begin{eqnarray*}
&&\frac{1}{N^{q/\alpha}} \sum_{n\geq 0} n^{\am/\alpha} d_{W_\am}\left(\frac{\tilde S_n}{n^{1/\alpha}}, S^\alpha_1 \right) \mathbf{P}( P^{N}_{s, t } = n)
\leq    C |t-s|^{\am/\alpha} g(\underline{f} N |t-s|) .
\end{eqnarray*}

Last, if we consider, on a further extension of $(\Omega,\mathcal{A},\mathbf{P}) $, an i.i.d. sequence $(V_k)_{k\geq 0}$ of uniform random variables, independent of anything else, and we put (recalling \eqref{eq:def:S^Nalpha_s_t}) 
\[ S^{N, \alpha}_{ k \delta, (k+1) \delta } \coloneqq G \left( P^N_{ k \delta, (k+1) \delta} ,  \frac{1}{(P^{N}_{k \delta , (k+1) \delta})^{1/\alpha}} \tilde S_{P^{N}_{k \delta , (k+1) \delta}} , V_k  \right) ,
\]
$k\geq 0$, then the previous reasoning yields that $(S^{N, \alpha}_{ k \delta, (k+1) \delta } )_{k\geq 0}$ is a sequence of i.i.d. strictly stable random variables, and that for each $ k \geq 0, $ $S^{N,\alpha}_{k\delta,(k+1)\delta} $ is independent of $\mathcal{F}_{k\delta}$ and of $(\bar\pi^{i})_{i\geq 1}$.   
This observation concludes the proof. 
\end{proof}

\subsection{Construction of an $\alpha$-stable L{\'e}vy process}\label{subs:bridge}

Let  $(S^{N,\alpha}_{k\delta,(k+1)\delta} )_{k\geq 0}$ the i.i.d. family of strictly stable variables obtained in Proposition \ref{lem:preliminary_results}. 
Then we have the following proposition. 
\begin{proposition}\label{lem:construction_limit_LP}
For each $N \in \N^\ast$ and $ \delta \in ( 0,1) $ such that $ 2 \|f\|_\infty \delta < 1$, there exists, on an extension of $(\Omega, \mathcal{A},\mathbf{P})$ depending on $N$ and $\delta$, a strictly $\alpha$-stable process $S^{N,\alpha}$ independent of $ \F_0 $ and of $ \bar \pi^i, i \geq 1, $  such that almost surely, 
\begin{equation}\label{eq:as}
S^{N, \alpha}_{(k+1)\delta}- S^{N, \alpha}_{k\delta} = \delta^{1/\alpha} S^{N,\alpha}_{k\delta, (k+1) \delta}
\end{equation}
for all $k\geq 0. $ 
\end{proposition}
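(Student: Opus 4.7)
The plan is to construct $S^{N,\alpha}$ by concatenating independent stable bridges whose prescribed endpoints are the scaled increments $\delta^{1/\alpha}S^{N,\alpha}_{k\delta,(k+1)\delta}$ supplied by Proposition~\ref{lem:preliminary_results}. First, I would fix a reference strictly $\alpha$-stable L\'evy process $L=(L_t)_{t\geq 0}$ on some auxiliary probability space, normalized so that $L_\delta$ has the same law as $\delta^{1/\alpha}S^{N,\alpha}_{0,\delta}$; since $S^{N,\alpha}_{0,\delta}$ is strictly $\alpha$-stable and strictly stable L\'evy processes are self-similar of index $1/\alpha$, such an $L$ exists. Because $\Dd$ is Polish, a regular conditional distribution $y\mapsto\mathbb{Q}^\delta_y$ of $(L_t)_{0\leq t\leq\delta}$ given $L_\delta=y$ exists, which is by definition the law of the stable bridge of length $\delta$ with endpoint $y$.

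Next, I would enlarge $(\Omega,\mathcal{A},\mathbf{P})$ by an independent auxiliary probability space supporting, for each $k\geq 0$, a $\Dd$-valued random element $B^{(k)}$, such that conditionally on the i.i.d. sequence $(S^{N,\alpha}_{j\delta,(j+1)\delta})_{j\geq 0}$ the $B^{(k)}$ are independent with $B^{(k)}\sim\mathbb{Q}^\delta_{\delta^{1/\alpha}S^{N,\alpha}_{k\delta,(k+1)\delta}}$; this is a standard product construction. Then I define $S^{N,\alpha}_0\coloneqq 0$ and, for $t\in[k\delta,(k+1)\delta]$,
\[
S^{N,\alpha}_t\coloneqq\sum_{j=0}^{k-1}\delta^{1/\alpha}S^{N,\alpha}_{j\delta,(j+1)\delta}+B^{(k)}_{t-k\delta}.
\]
The required identity \eqref{eq:as} then holds almost surely for every $k\geq 0$ by construction, and $S^{N,\alpha}$ has c\`adl\`ag paths.

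To identify the law, I would check that the finite-dimensional distributions of $S^{N,\alpha}$ coincide with those of $L$. On one hand, the endpoints $\delta^{1/\alpha}S^{N,\alpha}_{k\delta,(k+1)\delta}$ are i.i.d. with the same law as $L_\delta$, and given these endpoints the bridges $B^{(k)}$ are independent with the stable bridge law. On the other hand, the Markov property and independence of increments of $L$ imply that the law of $L$ conditional on its values at the grid $\{k\delta:k\geq 0\}$ factorizes as a product of independent stable bridges with the same kernel $\mathbb{Q}^\delta$. Hence the full finite-dimensional distributions match, and $S^{N,\alpha}$ is a strictly $\alpha$-stable L\'evy process.

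Finally, the independence from $\mathcal{F}_0$ and $(\bar\pi^i)_{i\geq 1}$ follows by combining two facts: the bridges $B^{(k)}$ live on an extension independent of $\mathcal{F}_0\vee\sigma(\bar\pi^i,i\geq 1)$, and the sequence $(S^{N,\alpha}_{k\delta,(k+1)\delta})_{k\geq 0}$ is itself jointly independent of $\mathcal{F}_0\vee\sigma(\bar\pi^i,i\geq 1)$. This last joint independence I would establish by induction on $k$ from the construction in Proposition~\ref{lem:preliminary_results}: $S^{N,\alpha}_{k\delta,(k+1)\delta}$ is built from the auxiliary uniform $V_k$ (independent of everything else) and from $\mathcal{F}_{(k+1)\delta}$-measurable quantities, while being independent of $\mathcal{F}_{k\delta}$, which together with the independence of $V_0,\ldots,V_{k-1}$ and the inductive hypothesis yields joint independence. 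The main conceptual ingredient is the bridge factorization of the conditional law of $L$ given its values at grid points, which is a direct consequence of the Markov property of $L$.
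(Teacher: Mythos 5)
Your construction is essentially identical to the paper's proof: both disintegrate the law of the stable process on $[0,\delta]$ given its endpoint into a bridge kernel (the paper invokes Theorem 8.5 of \cite{kallenberg}), then glue together conditionally independent bridges with prescribed endpoints $\delta^{1/\alpha}S^{N,\alpha}_{k\delta,(k+1)\delta}$ via a product kernel (Ionescu--Tulcea in the paper, your ``standard product construction''). Your verification of the finite-dimensional distributions and of the independence from $\F_0$ and $(\bar\pi^i)_{i\geq 1}$ is correct and in fact spells out details the paper leaves implicit.
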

\begin{proof}
Fix any $\delta>0$ and consider the stable process $ ( S^\alpha_t )_{ t \geq 0 }, $ starting from $ S_0^\alpha = 0,$ defined on some probability space. 
Then, by Theorem 8.5 in \cite{kallenberg}, the joint law of $ ( S_\delta^\alpha, ( S^\alpha_t )_{ t \in [ 0, \delta ] }) $ can be disintegrated into the product of $\mathcal{L}(S^\alpha_\delta)$ and a probability kernel $Q$ from $\R$ to the Skorokhod space $\Dd$ such that $Q$ is $\mathcal{L}(S_\delta^\alpha)$-a.s. unique and satisfies 
\[ Q(S_\delta^\alpha, \cdot) = \mathcal{L}(( S^\alpha_t )_{ t \in [ 0, \delta ]}  | S_\delta^\alpha)(\cdot) \text{ a.s.}\]  
Notice that $Q$ depends on $\delta.$

Next, consider the product spaces $\R^\N$ and $\Omega' \coloneqq \prod_{k\geq 0} \Dd$, which also depends on $ \delta,$ endowed with the product $\sigma$-algebras, and define the product kernel 
\[ Q^\infty( (x^{(k)})_{k\geq 0} , d\gamma) \coloneqq \otimes_{k\geq 1} Q (x^{(k)}, d\gamma^{(k)}) \, . \]
Then $Q^\infty( (x^{(k)})_{k\geq 0} , d\gamma) $ is a probability kernel, from $\R^\N$ to $\Omega', $ thanks to the extension theorem of Ionescu-Tulcea (see e.g. Theorem 8.23 and Corollary 8.25 in \cite{kallenberg}). 
Define now  
\[ \phi \, :\, \Omega' \to D(\R_+, \R), \; 
        (\gamma^{(k)})_{k\geq 0}  \mapsto (\phi_t)_{t\geq 0}
\]
by $\phi_0 = 0$ and for any $ t > 0,$
\begin{equation*}
  \phi_t \coloneqq \sum_{k\geq 0} \indiq_{] k \delta, (k+1)\delta]}(t) \left(\sum_{l=0}^{k-1} \gamma^{(l)} ( \delta )  + \gamma^{(k)}(t-k\delta) \right), 
\end{equation*}
with the convention $\sum_{l=0}^{-1}  = 0$. 

Introduce finally
\[ X^{(k)} = \delta^{1/\alpha} S^{N,\alpha}_{k\delta, (k+1) \delta} \; ,\]
defined thanks to Proposition \ref{lem:preliminary_results} for any $ k \geq 0 $ on an extension of the original probability space. Then by construction, $(\phi_t)_{t \geq 0  } $ is an $\alpha$-stable process under $\mathbf{P}  \otimes Q^\infty( (X^{(k)})_{k\geq 1}, \cdot) .$ 
Setting $(S_t^{N,\alpha})_t \coloneqq (\phi_t)_t $ concludes the proof. 

\end{proof}

\subsection{Errors due to time discretization}\label{sec:discretis}
We now state a generic result expressing the error that is due to time discretization. 

\begin{proposition}\label{prop:discretis0}
 Grant Assumptions \ref{ass:b}\ref{itm:bbdd}, \ref{ass:f}\ref{itm:fbdd}, \ref{ass:psi}\ref{itm:psibdd}, \ref{ass:nu0} and \ref{ass:f+}.  
Then for any  $ 1 \le i \le N, $ for any $ s, t \geq 0 $ such that $ |s-t | \le 1,$ and for $  \am < \alpha,$ 
($\am \geq 1, $ if $ \alpha > 1$), we have \\
(i) 
\begin{equation*}
\E[|  f(X^{N, i }_t)  - f(X^{N, i }_{s }) |]\le  C \begin{cases}   N^{1/\am-1 /\alpha} |t-s|^{1/\am} , & \mbox{ if }  \alpha \in (1,2) \\ 
N^{ 1 - \am/ \alpha } |t-s| , & \mbox{ if } \alpha \in (0,1) .
\end{cases}
 \end{equation*}
(ii) Moreover, if  $ \alpha > \am  > 1,$ then we also have
\[
\E [ |X^{N, i }_t  - X^{N, i }_s |^\am ] \le C N^{ 1 - \am/ \alpha} |t-s| .
\]
\end{proposition}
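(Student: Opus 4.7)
Both bounds stem from the SDE \eqref{eq:micro:dyn}, which decomposes the increment $X^{N,i}_t - X^{N,i}_s$ into a drift contribution (pathwise bounded by $\|b\|_\infty|t-s|$), a main-jump contribution $J^i_{s,t}$ (identically zero when $\alpha>1$), and a collateral term $A^{N,i}_{s,t}$ as in \eqref{eq:def:A^N_s_t}. The plan is to prove (ii) first and then deduce the $\alpha>1$ half of (i) from it via a Jensen inequality, while the $\alpha<1$ half of (i) is handled directly using the subadditivity of $\|\cdot\|_{d_{\am}}$. The essential work concerns the collateral term, and the two regimes $\alpha>1$ and $\alpha<1$ require different treatments because $\nu$ is centered only when $\alpha>1$.

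\textbf{Part (ii), $\alpha\in(1,2)$.} Since $\psi\equiv 0$, only the drift and the collateral term remain. Centering of $\nu$ lets me write the collateral contribution as a sum over $j\neq i$ of independent mean-zero martingales
\[
M^j_t \;=\; \int_{(0,t]\times\R_+\times\R} u\,\indiq_{\{z\le f(X^{N,j}_{r-})\}}\,\tilde\pi^j(dr,dz,du).
\]
Conditioning on the (Poisson, mean $\le \|f\|_\infty|t-s|$) number of atoms of $\pi^j$ in $(s,t]\times[0,\|f\|_\infty]\times\R$, the kicks are i.i.d.\ $\nu$-distributed and mean-zero, so the von Bahr--Esseen inequality (applicable because $\am\in(1,2]$ and $\int|u|^{\am}\nu(du)<\infty$ since $\am<\alpha$) yields $\E|M^j_t-M^j_s|^{\am}\le C|t-s|$. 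A second application of the same inequality to the independent mean-zero variables $(M^j_t-M^j_s)_{j\neq i}$ gives
\[
\E\Bigl|\sum_{j\neq i}(M^j_t-M^j_s)\Bigr|^{\am} \;\le\; 2\sum_{j\neq i}\E|M^j_t-M^j_s|^{\am} \;\le\; CN|t-s|.
\]
Dividing by $N^{\am/\alpha}$ and adding the drift contribution $(\|b\|_\infty|t-s|)^{\am}\le C|t-s|$ (using $|t-s|\le 1$) produces the announced $CN^{1-\am/\alpha}|t-s|$.

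\textbf{Part (i).} For $\alpha\in(1,2)$, Lipschitz continuity of $f$ and Jensen's inequality (valid because $\am\ge 1$) give
\[
\E|f(X^{N,i}_t)-f(X^{N,i}_s)| \;\le\; \bigl(\E|f(X^{N,i}_t)-f(X^{N,i}_s)|^{\am}\bigr)^{1/\am} \;\le\; C\bigl(\E|X^{N,i}_t-X^{N,i}_s|^{\am}\bigr)^{1/\am},
\]
and part (ii) finishes the argument. For $\alpha\in(0,1)$, boundedness and Lipschitz continuity of $f$ imply $|f(x)-f(y)|\le C\,d_{\am}(x,y)$, so it suffices to bound $\E[\|X^{N,i}_t-X^{N,i}_s\|_{d_{\am}}]$. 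The subadditivity \eqref{eq:dqsubadditive} of $\|\cdot\|_{d_{\am}}$ lets me split the increment into drift, main-jump, and collateral contributions. The drift is trivially $O(|t-s|)$; the main-jump term is dominated by $\|\psi\|_\infty$ times the number of jumps of particle $i$, a Poisson variable with mean $\le \|f\|_\infty|t-s|$; for the collateral term I use $\|x\|_{d_{\am}}\le|x|^{\am}$ together with the pure subadditivity $\bigl(\sum_k|u_k|\bigr)^{\am}\le\sum_k|u_k|^{\am}$ (valid for $\am<1$) applied to the atoms of $\pi^j$, $j\neq i$, and take expectation against the intensity $dr\,dz\,\nu(du)$ to obtain $\E|A^{N,i}_{s,t}|^{\am}\le CN^{1-\am/\alpha}|t-s|$.

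\textbf{Main obstacle.} The main subtlety is the collateral term in the case $\alpha<1$: the martingale/von Bahr--Esseen argument available for $\alpha>1$ relies essentially on the centering of $\nu$, and moreover $\int|u|\nu(du)$ is generically infinite when $\alpha<1$, so compensation is not even available. One is therefore forced back onto the crude subadditivity of $x\mapsto x^{\am}$ over individual jump atoms; what rescues the argument is that this bound still respects the exact scaling $N^{1-\am/\alpha}|t-s|$ dictated by the $N^{1/\alpha}$-renormalization and by the finiteness of the $\am$-th absolute moment of $\nu$.
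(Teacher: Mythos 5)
Your proposal is correct in substance and, for $\alpha>1$, essentially coincides with the paper's argument: the paper also exploits that the compensated interaction term is a martingale and bounds its $L^{\am}$-moment, only it does so via the Burkholder--Davis--Gundy inequality in $L^{\am}$ followed by sub-additivity of $x\mapsto x^{\am/2}$ over the atoms, which yields exactly the same bound $\E\bigl[\bigl|N^{-1/\alpha}\sum_{j\neq i}(M^j_t-M^j_s)\bigr|^{\am}\bigr]\le CN^{1-\am/\alpha}|t-s|$ as your von Bahr--Esseen route; item (i) for $\alpha>1$ is then deduced by Lipschitz continuity of $f$ and Jensen, exactly as you do. For $\alpha<1$ your route genuinely differs: the paper applies It\^o's formula to $f(X^{N,1})$ (this is where Assumption \ref{ass:f+}, $f\in\mathcal{C}^1$, is used, the drift term becoming $\int b\,f'\,dv$) and then bounds the jump terms atom by atom using $|f(v+u N^{-1/\alpha})-f(v)|\le C|u|^{\am}N^{-\am/\alpha}$, whereas you bypass It\^o entirely by bounding $\E[\|X^{N,i}_t-X^{N,i}_s\|_{d_{\am}}]$ via sub-additivity and then using $|f(x)-f(y)|\le C\,d_{\am}(x,y)$. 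Your version is slightly more elementary and does not need $f\in\mathcal{C}^1$ for this step; the resulting estimates are identical.

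One imprecision you should repair in the $\alpha>1$ case: the increments $(M^j_t-M^j_s)_{j\neq i}$ are \emph{not} independent, and within a single $M^j$ the summands are not i.i.d.\ either, because the thinning indicators $\indiq_{\{z\le f(X^{N,j}_{r-})\}}$ depend on the whole system through the collateral jumps. The von Bahr--Esseen inequality for independent centered variables therefore does not apply as literally invoked. The fix is standard: $\sum_{j\neq i}(M^j_t-M^j_s)$ is a single compensated Poisson stochastic integral (with respect to the superposition of the independent $\pi^j$), so the martingale version of the inequality, $\E|\int H\,d\tilde\pi|^{\am}\le C\,\E\int|H|^{\am}\,dr\,dz\,\nu(du)$ for $\am\in[1,2]$ -- equivalently, BDG plus sub-additivity as in the paper -- gives the same bound $CN|t-s|$ with no independence needed.
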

\begin{proof}
We start with the proof of item (i) in case $ \alpha < 1 .$ By exchangeability, it suffices to consider $i = 1.$ Suppose w.l.o.g. that $s \le t .$ By Ito's formula, 
\begin{multline*}
f(X^{N, 1 }_{t})-f(X^{N, 1 }_{s}) = \int_s^t  b(X^{N, 1 }_{v}, \mu_v^N ) f' (X^{N,1}_{v} ) dv \\
+\int_{]s,t] \times \R_+} \indiq_{\{z\leq f(X^{N,1}_{v-})\}} [f ( X^{N,1}_{v- } + \psi ( X^{N,1}_{v- }, \mu_{v-}^N )  ) - f ( X^{N,1}_{v- } )]   \bar \pi^i (dv, dz) \\
+\sum_{j\neq 1}\int_{]s,t]\times \R_+\times \R} [f(X^{N,1}_{v- } + \frac u{N^{1/\alpha}}) - f(X^{N,1}_{v- } ) ] \indiq_{\{z\leq f(X^{N, j }_{v-})\}}\pi^j(dv,dz,du)  \\
\eqqcolon B^N_{s,t} + \psi^N_{s,t}+ I^N_{s,t}.
\end{multline*}
Using the Lipschitz continuity of $f$ and the boundedness of $f,f', b, \psi,$  we have that 
\[ \E ( | B^N_{s,t} |+ |\psi^N_{s,t}|) \le C |t-s | .\] 
As for the term $I^N_{s,t}$, since $\am<1,$ we use that $f$ is bounded and Lipschitz to obtain  
\[ \left| f \left( v +\frac{ u}{N^{1/\alpha}}\right) - f(v) \right| \le C  \left| f \left( v +\frac{ u}{N^{1/\alpha}}\right) - f(v) \right|^\am \leq  C \frac{ |u|^{\am  }}{ N^{ \am/ \alpha}} ,\]
such that
\[ \E |  I^N_{s,t}  | \le C N^{ 1 - \am / \alpha }  |t-s| .\] 
If $ \alpha > 1, $ we directly study 
$$ X_t^{N, i } - X_s^{N, i } = \int_s^t b(X^{N, 1 }_{v}, \mu_v^N ) dv    +  \frac 1{N^{1/\alpha}}\sum_{j\neq i}\int_{]s,t]\times \R_+\times \R}  u \indiq_{\{z\leq f(X^{N, j }_{v-})\}}\pi^j(dv,dz,du) $$
such that
\begin{multline*}
  |X_t^{N, i } - X_s^{N, i } |^\am \le \\
  C \left( \| b \|_\infty^\am |t-s|^\am + \left| \frac 1{N^{1/\alpha}}\sum_{j\neq i}\int_{]s,t]\times \R_+\times \R}  u \indiq_{\{z\leq f(X^{N, j }_{v-})\}}\pi^j(dv,dz,du)  \right|^\am \right) ,
\end{multline*} 
where we have used that $b$ is bounded. 
  
Since $\nu$ is centered, the second expression on the r.h.s. above (the interaction term) is a martingale. Thus we use the Burkholder-Davis-Gundy inequality in $L^\am$ to obtain
\begin{multline*}
 \E\left[ \left|  \frac 1{N^{1/\alpha}}\sum_{j\neq i}\int_{]s,t]\times \R_+\times \R}  u \indiq_{\{z\leq f(X^{N, j }_{v-})\}}\pi^j(dv,dz,du)\right|^\am  \right]  \leq  \\
  C   
          \E\left[ \left (
         \frac{1}{N^{2/\alpha}}\sum_{j\neq i } \int_{]s,t]\times \R_+ \times \R} u^2  \indiq_{\{z\leq f(X^{N,j}_{v-})\}} \pi^j(dv,dz,du) 
        \right)^{\am/2}\right] .
\end{multline*} 
Recalling Remark \ref{rem:finite_sum} and using the boundedness of $f,$ we obtain 
\begin{multline}\label{eq:similar2}
        \E\left[ \left|  \frac 1{N^{1/\alpha}}\sum_{j\neq i}\int_{]s,t]\times \R_+\times \R}  u \indiq_{\{z\leq f(X^{N, j }_{v-})\}}\pi^j(dv,dz,du)\right|^\am  \right] \\ 
         \leq C \E\left[\frac{1}{N^{\am/\alpha}}\sum_{j\neq i } \int_{]s,t] \times \R_+ \times \R} |u|^\am  \indiq_{\{z\leq f(X^{N,j}_{v-})\} } \pi^j(dv,dz,du) \right] \\
          \leq  C  N^{1- \am/\alpha} \int_{\R} |u|^\am \nu(du) \int_s^t \E\left[f(X^{N,1}_{v})  \right] dv \le 
          C N^{ (1- \am/\alpha)}   |t-s| .  
\end{multline}
Since $ |t- s|^\am \le |t-s| $ (recall that $ |t-s| \le 1 $ and $ \am \geq 1$), this implies item (ii). 
Item (i) follows similarly, since $\E[|  f(X^{N, i }_t)  - f(X^{N, i }_{s }) |] \le \|f\|_{Lip} \E[|  X^{N, i }_t  - X^{N, i }_{s } |] \le \|f\|_{Lip} \left(\E[|  X^{N, i }_t  - X^{N, i }_{s } |^\am] \right)^{1/\am} .$  
\end{proof}

\subsection{Proof of Theorem \ref{prop:representation_finite_syst}}\label{subs:Y}
In this subsection, we give the proof of Theorem \ref{prop:representation_finite_syst}. Let $S^{N,\alpha}$ be the $\alpha$-stable process obtained as in Proposition  \ref{lem:construction_limit_LP} from the sequence of increments $(S^{N,\alpha}_{k\delta,(k+1)\delta})_{k\geq 0}$ given by Proposition \ref {lem:preliminary_results}. Remember that 
\begin{equation*}
A_t^{N}=\frac 1{N^{1/\alpha}}\sum_{j =  1}^N\int_{[0,t]\times\R_+\times\R }u \indiq_{ \{ z \le  f ( X^{N, j}_{{s-}}) \}} \pi^j (ds,dz,du) 
\end{equation*} 
and 
\begin{equation*}
A_t^{N,\delta}=\frac 1{N^{1/\alpha}}\sum_{j =  1}^N\int_{[0,t]\times\R_+\times\R }u \indiq_{ \{ z \le  f ( X^{N, j}_{\tau_{s}}) \}} \pi^j (ds,dz,du) .
\end{equation*} 
Putting  
\begin{equation*}\label{eq:def:R^N1_t}
\begin{split}
R^{N,1}_{t} \coloneqq A^N_t - A^{N,\delta}_t ,
\end{split}
\end{equation*}
\begin{equation*}\label{eq:def:R^N2_t}
    R^{N,2}_t \coloneqq A^{N,\delta}_t - \int_{[0,t]} \left(\mu^N_{\tau_s}(f)\right)^{1/\alpha} d S^{N,\alpha}_s  ,
\end{equation*}
and
\begin{equation*}
R_t^N \coloneqq  R^{N,1}_t + R^{N,2}_t, 
\end{equation*}
we obtain \eqref{eq:atnapprox}. 

\subsection*{Study of  $ R_t^{N, 1 } .$}
Notice that, in case $\alpha > 1,$ $R_t^{N, 1 }$ is a martingale, since $\int_{\R^*}u\nu(du)=0.$ Then, in this case, we can employ the Burkholder-Davis-Gundy inequality in $L^\am$ and similar arguments as those leading to \eqref{eq:similar2}. In case $ \alpha < 1, $ we use instead the sub-additivity of $x\mapsto x^\am$ on $\R_+$ for $\am<\alpha$.  In both cases we obtain
\begin{eqnarray*}
        \E[|R^{N,1}_t |^\am  ]    
         \leq C  N^{1- \am/\alpha} \int_{\R^\ast} |u|^\am \nu(du) \int_0^t \E\left[\left|f(X^{N,j}_{s}) - f(X^{N,j}_{\tau_s}) \right|\right] ds.
 \end{eqnarray*}
Using Proposition \ref{prop:discretis0}, we obtain
\begin{equation}\label{eq:R^N1_t}
    \begin{split}
    \E[|R^{N,1}_t |^\am  ] & \leq C t  \left\{ \begin{array}{ll}
        N^{ ( 1 + \frac{1}{\am}  ) (1  - \am/\alpha)} \delta^{1/\am}, & \mbox{ if } \alpha > 1 \\
        N^{ 2(1 - \am/\alpha )}  \delta & \mbox{ if } \alpha < 1
        \end{array} \right\} .
    \end{split}
\end{equation}
In particular, for $\alpha >1,$ 
\begin{equation}\label{eq:rtn1lq}
\E[|R^{N,1}_t|]\leq \left (\E[|R^{N,1}_t |^\am  ]\right )^{1/\am} \leq C_{t }  N^{ \frac{1}{\am}( 1 + \frac{1}{\am}  ) (1  - \am/\alpha)} \delta^{1/(\am)^2},
\end{equation}
and, for $\alpha<1$, we use $\E[ \norme{R^{N,1}_t}_{d_{\am}} ] \leq \E[|R^{N,1}_t|^{\am}]$.

\subsection*{Study of  $R_t^{N, 2 }  .$}

 Recall \eqref{eq:def:A^N_s_t}, which we will apply for $s=k\delta$ and $t=(k+1)\delta .$ Then, denoting by $\lceil x \rceil$ the upper integer part of $x$, we have 
\begin{equation*}
    \begin{split}
        A^{N,\delta}_t & = \sum_{k=0}^{\lceil \frac{t}{\delta}\rceil - 1} A^{N}_{k\delta, (k+1)\delta} - \frac{1}{N^{1/\alpha}} \sum_{j=1}^N \int_{]t, \lceil \frac{t}{\delta}\rceil \delta ] \times \R_+ \times \R} u \indiq_{\{z \leq f(X^{N,j}_{\left(\lceil \frac{t}{\delta}\rceil -1\right)\delta})\}}\pi^j(ds,dz,du)\\
        & \eqqcolon \sum_{k=0}^{\lceil \frac{t}{\delta}\rceil - 1} A^{N}_{k\delta, (k+1)\delta} - E^1_t .
    \end{split}
\end{equation*}

Also, define
\begin{equation*}
    \begin{split}
        E^2_t \coloneqq \int_{]t, (\lceil \frac{t}{\delta}\rceil +1)\delta ]} \left(\mu^N_{\left(\lceil\frac{t}{\delta}\rceil \right)\delta}(f)\right)^{1/\alpha} d S^{N,\alpha}_s 
    \end{split}
\end{equation*}
and
\begin{equation*}\label{eq:def:E^N,delta_k}
E^{N}_{k\delta, (k+1)\delta }\coloneqq \left|\left(\frac{P^{N}_{k\delta, (k+1)\delta }}{N \delta}\right)^{1/\alpha} - (\mu^N_{k\delta}(f))^{1/\alpha}  \right|
| S^{N,\alpha}_{(k+1)\delta} - S^{N,\alpha}_{k\delta} |  ,
\end{equation*} 
where $P^{N}_{k\delta, (k+1)\delta}$ was defined in \eqref{eq:def:P^N_s_t}.

Then, for $1<\alpha<2$, we can write
\begin{eqnarray}\label{eq:R^N2_t:bound_1}
& & \E[| R^{N,2}_t| ]  \leq
\E\left[\left| \sum_{k=0}^{\lceil \frac{t}{\delta}\rceil-1} \left\{ A^{N}_{k\delta, (k+1)\delta }  - \int_{]k\delta, (k+1)\delta ]}\left(\mu^N_{k\delta}(f)\right)^{1/\alpha}d S^{N,\alpha}_s  \right\} -E^1_t + E^2_t  \right|\right] \nonumber \\
& & \leq \E\left[ \sum_{k=0}^{\lceil \frac{t}{\delta}\rceil -1} \left|A^N_{k\delta, (k+1)\delta} - \left(\frac{P^{N}_{k\delta, (k+1)\delta }}{N}\right)^{1/\alpha} S^{N,\alpha}_{k\delta, (k+1)\delta} \right.\right. \nonumber \\
&      & \left.\left. + \left(\frac{P^{N}_{k\delta, (k+1)\delta }}{N}\right)^{1/\alpha} S^{N,\alpha}_{k\delta, (k+1)\delta} - \int_{]k\delta, (k+1)\delta ]}\left(\mu^N_{k\delta}(f)\right)^{1/\alpha}d S^{N,\alpha}_s  \right| \right] \nonumber  \\
&      & \quad \quad \quad  \quad \quad \quad \quad \quad \quad \quad \quad \quad  \quad \quad \quad \quad \quad \quad \quad \quad \quad  \quad \quad \quad+ \E[|E^1_t|] + \E[|E^2_t|] \nonumber \\
&      & \leq \E\left[ \sum_{k=0}^{\lceil \frac{t}{\delta}\rceil -1} \left|A^N_{k\delta, (k+1)\delta} - \left(\frac{P^{N}_{k\delta, (k+1)\delta }}{N}\right)^{1/\alpha} S^{N,\alpha}_{k\delta, (k+1)\delta} \right|\right] \nonumber \\
& & + \E\left[ \sum_{k=0}^{\lceil \frac{t}{\delta}\rceil - 1} \left|\left(\frac{P^{N}_{k\delta, (k+1)\delta }}{N \delta}\right)^{1/\alpha} - (\mu^N_{k\delta}(f))^{1/\alpha}  \right|
| S^{N,\alpha}_{(k+1)\delta} - S^{N,\alpha}_{k\delta} | \right] \nonumber \\
 && \quad \quad \quad  \quad \quad \quad \quad \quad \quad \quad \quad \quad  \quad \quad \quad \quad \quad \quad \quad \quad \quad  \quad \quad \quad+ \E[|E^1_t|] + \E[|E^2_t|] \nonumber \\
& & \leq \left(\lceil \frac{t}{\delta}\rceil - 1\right) C \delta^{1/\alpha} g(N \delta) + \sum_{k=0}^{\lceil \frac{t}{\delta}\rceil - 1} \E[E^{N}_{k\delta, (k+1)\delta }] +\E[|E^1_t|] + \E[|E^2_t|] ,
\end{eqnarray}
where we have used that, thanks to \eqref{eq:as}, almost surely, $ \delta^{1/\alpha} S^{N,\alpha}_{k\delta, (k+1)\delta}  =  S^{N,\alpha}_{(k+1)\delta} - S^{N,\alpha}_{k\delta}$, and the last inequality follows straight from Proposition \ref {lem:preliminary_results} $i)$.

For $\alpha<1$, using the sub-additivity of the function $ x \mapsto x^\am $ on $\R_+ $ for $ \am < \alpha < 1$,  
we can write 
\begin{eqnarray}\label{eq:R^N2_t:bound_1:alpha<1}
& & \E[ \norme{ R^{N,2}_t}_{d_{\am}} ]  \leq 
\E\left[ \sum_{k=0}^{\lceil \frac{t}{\delta}\rceil -1} \left|A^N_{k\delta, (k+1)\delta} - \left(\frac{P^{N}_{k\delta, (k+1)\delta }}{N}\right)^{1/\alpha} S^{N,\alpha}_{k\delta, (k+1)\delta} \right| \right.  \\
& & \left. \quad \quad \quad \quad \quad \quad \quad \quad \quad \quad 
\wedge \sum_{k=0}^{\lceil \frac{t}{\delta}\rceil -1} \left|A^N_{k\delta, (k+1)\delta} - \left(\frac{P^{N}_{k\delta, (k+1)\delta }}{N}\right)^{1/\alpha} S^{N,\alpha}_{k\delta, (k+1)\delta} \right|^{\am} \right] \nonumber \\
& & + \E\left[ \sum_{k=0}^{\lceil \frac{t}{\delta}\rceil - 1} \left|\left(\frac{P^{N}_{k\delta, (k+1)\delta }}{N \delta}\right)^{1/\alpha} - (\mu^N_{k\delta}(f))^{1/\alpha}  \right|^{\am}
| S^{N,\alpha}_{(k+1)\delta} - S^{N,\alpha}_{k\delta} |^{\am} \right] \nonumber \\
 && \quad \quad \quad  \quad \quad \quad \quad \quad \quad \quad \quad \quad  \quad \quad \quad \quad \quad \quad \quad \quad \quad  \quad \quad \quad+ \E[|E^1_t|^{\am}] + \E[|E^2_t|^{\am}] \nonumber\\
& & \leq \left( \lceil \frac{t}{\delta} \rceil -1 \right) C \delta^{\am/\alpha} g(N\delta) + \sum_{k=0}^{\lceil \frac{t}{\delta}\rceil - 1} \E[(E^{N}_{k\delta, (k+1)\delta })^\am] +\E[|E^1_t|^\am] + \E[|E^2_t|^\am] .  \nonumber
\end{eqnarray}
In the last inequality we employed Proposition \ref{lem:preliminary_results} $ii)$, and we used that the exponential factor is negligible compared to the other terms. 

Now, using the same arguments as those used in the proof of Proposition \ref{prop:discretis0}, we have in both cases $\alpha<1$ and $1<\alpha<2,$
\begin{equation}\label{eq:E^1_t}
\E[ | E^1_t |^{\am \wedge 1}] \leq C  \left\{ 
\begin{array}{ll}
N^{1/\am - 1/\alpha } \delta^{1/\am} & \mbox{ if } \alpha > 1 \\
N^{ 1 - \am/ \alpha} \delta & \mbox{ if } \alpha < 1  \\
\end{array} \right\} .         
\end{equation} 
Moreover, since $f$ is bounded and $S^{N,\alpha}_{s}-S^{N,\alpha}_r \overset{d}{=} |s-r|^{1/\alpha} S^\alpha$, 
\begin{equation}\label{eq:E2t}
    \begin{split}
        \E[| E^2_t|^{\am \wedge 1} ] & = \E\left[\left|\left(\mu^N_{\left(\lceil\frac{t}{\delta}\rceil -1\right)\delta}(f) \right)^{1/\alpha} \right|^{\am \wedge 1} \left| \int_{]t, \lceil \frac{t}{\delta}\rceil\delta]} d S^{N,\alpha}_s \right|^{\am \wedge 1} \right]
       \\
       & \leq C \E[|S^{N,\alpha}_{\lceil \frac{t}{\delta}\rceil\delta} - S^{N,\alpha}_t|^{\am \wedge 1}] \leq C \delta^{^{(\am \wedge 1)}/\alpha}.
    \end{split}
\end{equation}
Last, we use once more deviation inequalities to deal with the (conditional) Poisson random variables: as before, since $f$ is bounded and lowerbounded, for the event  
\begin{equation*}
        G \coloneqq \left\{\frac{1}{2} \delta N \underline{f}  \le P^{N}_{k\delta, (k+1)\delta} \leq 2  \delta N \|f\|_\infty  \right\} , \mbox{ we have }  \mathbf{P}(G^c ) \leq c e^{-C \delta N}. 
\end{equation*}
Notice that  $0 < \underline f  < \mu^N_{k\delta}(f) \le \|f\|_\infty . $ So we can use the Lipschitz property of $z \mapsto z^{1/\alpha}$ on $ [ \frac12 \underline f , \infty [ $ in case $ \alpha > 1 $ and on $ [ 0, 2 \|f\|_\infty ] $ in case $ \alpha < 1 ,$ Jensen's inequality  and the fact that $P^{N}_{k\delta, (k+1)\delta}$ is conditionally  Poisson distributed together with the boundedness of $f$  to deduce that 
\begin{eqnarray*}
&&\E\left[\left|\left(\frac{P^{N}_{k\delta, (k+1)\delta}}{N \delta }\right)^{1/\alpha} - \left(\mu^N_{k\delta}(f)\right)^{1/\alpha}\right|^{ \am \wedge 1 }  \indiq_G  \right] \le C   \E\left[\left|\frac{P^{N}_{k\delta, (k+1)\delta}}{N \delta } - \mu^N_{k\delta}(f)\right|^{ \am \wedge 1 } \indiq_G  \right]\\
&&\le  C   \E\left[\left|\frac{P^{N}_{k\delta, (k+1)\delta}}{N \delta } - \mu^N_{k\delta}(f)\right| \indiq_G  \right]^{ \am \wedge 1 } \le C {\E\left[\left|\frac{P^{N}_{k\delta, (k+1)\delta}}{N \delta } - \mu^N_{k\delta}(f)\right|^2 \right]}^{ \frac12 ( \am \wedge 1 ) } \\
&&\le  C \left( {\frac{1}{N^2 \delta^2}Var\left[Pois\left( N \delta \mu^N_{k\delta}(f)\right)\right]}\right)^{ \frac12 ( \am \wedge 1 ) } \le  C (N \delta)^{-( \am \wedge 1 )/2}. 
\end{eqnarray*}
Moreover, using H\"older's inequality, bounding  $[ P^{N}_{k\delta, (k+1)\delta} / (N \delta) ]^{ (\am \wedge 1)/ \alpha } $ in terms of its first moment, and using that $\mu^N_{k\delta}(f) \le \|f\|_\infty, $ 
\[  \E\left[\left|\left(\frac{P^{N}_{k\delta, (k+1)\delta}}{N \delta }\right)^{1/\alpha} - \left(\mu^N_{k\delta}(f)\right)^{1/\alpha}\right|^{ \am \wedge 1 }  \indiq_{ G^c}  \right]  \le  C e^{-c\delta N}.\]
Therefore, using the independence of $S^{N,\alpha}_{(k+1)\delta}- S^{N,\alpha}_{k\delta}$ of $\F_{k\delta}$ and of $P^{N}_{k\delta, (k+1)\delta},$ we conclude that 
\begin{equation}\label{eq:bound:ENk2delta}
    \begin{split}
        \E[(E^{N}_{k\delta, (k+1)\delta})^{\am \wedge 1}] & =  \E\left[\left| \left(\frac{P^{N}_{k\delta, (k+1)\delta}}{N\delta}\right)^{1/\alpha} - (\mu^N_{k\delta}(f))^{1/\alpha} \right|^{ \am \wedge 1 } \right] \E \left[ |S^{N,\alpha}_{(k+1)\delta}- S^{N,\alpha}_{k\delta}|^{ \am \wedge 1 } \right] \\
        & \leq  C (N \delta)^{-( \am \wedge 1 )/2}  \delta^{(\am \wedge 1)/\alpha}.
    \end{split}
\end{equation}
Overall, putting together \eqref{eq:R^N2_t:bound_1} (\eqref{eq:R^N2_t:bound_1:alpha<1} if $\alpha<1$), \eqref{eq:E^1_t}, \eqref{eq:E2t} and \eqref{eq:bound:ENk2delta}, 
we have in the case $\alpha>1$,
\begin{equation*}
\E[ | R^{N,2}_t |  ] \leq C_{t} \left[ \lceil\frac{t}{\delta}\rceil \delta^{1/\alpha} \left(    g(N \delta)+  (N \delta)^{- 1/2}   \right) + N^{1/\am - 1/\alpha } \delta^{1/\am } + \delta^{1  /\alpha}\right] 
\end{equation*}
and 
\begin{equation*}
  \E[  \norme{R^{N,2}_t}_{d_{\am}}] \leq  C_{t} \left[
\lceil\frac{t}{\delta}\rceil \left(  \delta^{\am /\alpha}  g(N \delta)+  (N \delta)^{-\am /2}  \delta^{\am  /\alpha} \right) + N^{ 1 - \am / \alpha} \delta  +\delta^{\am  /\alpha} \right]  
\end{equation*}
for $\alpha<1$. 

Recall the control on $ R^{N, 1 }_t$ obtained in \eqref{eq:rtn1lq} above in case $ \alpha > \am > 1.$ We clearly have that $N^{1/\am - 1/\alpha } \delta^{1/\am} \le  N^{\left( 1 + \frac{1}{\am }\right) \left( 1 - \frac{\am }{\alpha}\right) } \delta^{\frac{1}{\am }} .$ Suppose now that $ \delta = \delta(N) $ is small enough, such that the latter expression is smaller than $1.$ Since $\am > 1,$ this implies that 
\[ N^{1/\am - 1/\alpha } \delta^{1/\am}  \le   N^{\left( 1 + \frac{1}{\am }\right) \left( 1 - \frac{\am }{\alpha}\right) } \delta^{\frac{1}{\am }} \le \left(  N^{\left( 1 + \frac{1}{\am }\right) \left( 1 - \frac{\am }{\alpha}\right) } \delta^{\frac{1}{\am }}\right)^{1/\am } \le r_t(N, \delta) . \]
Moreover, for 
$ \alpha < 1, $ recalling \eqref{eq:R^N1_t}, clearly  $  N^{ 1 - \am / \alpha} \delta   \le  N^{ 2 ( 1 - \frac{\am }{\alpha}) } \delta \le r_t(N,\delta)$, which allows to conclude. 
$\qed$
\section{Convergence to the limit system}\label{sec:convergencelim}
\subsection{Some technical results}\label{eq:sectechique}
We provide some technical lemmas that we 
state in this subsection for generic processes $X $ and $ \tilde X $ with associated measures $ \mu $ and $ \tilde \mu $ which can be either the associated empirical distributions or the respective conditional laws given $ S^\alpha .$ 
In the sequel, $ M $  denotes the  jump measure of $S^\alpha $ and, as before, $ T_K = \inf \{ t \geq 0 : | \Delta S_t^\alpha | > K \}.$ 
Moreover, we will always assume that Assumptions \ref{ass:b}--\ref{ass:nu0+} are satisfied. 

 \begin{lemma}\label{lem:controlM}
Let 
\[
M_t(\mu,\tilde \mu) \coloneqq \int_{[0, t ] \times \R^* }   \left(\mu^{1/ \alpha}_{s-}(f)  - \tilde \mu^{1/ \alpha}_{s-}(f)\right)z    \tilde M ( ds, dz)\quad  \mbox{ if } \quad\alpha > 1  ,
\] 
and
\[
M_t(\mu,\tilde \mu) \coloneqq \int_{[0, t ] \times \R^* }   \left( \mu^{1/ \alpha}_{s-}(f)  - \tilde \mu^{1/ \alpha}_{s-}(f)\right)z    M ( ds, dz)\quad  \mbox{ if } \quad\alpha < 1  .
\] 
\begin{description}
\item{i)} For all $1<\alpha<\ap\leq 2, $ we have for any $ K >  0, $  
\begin{multline}\label{eq:controlI1}
 \E\left [  |M_t(\mu,\tilde \mu)|^{\ap}\indiq_{\{t < T_K\}}\right ]  \le C \frac{K^{\ap-\alpha}}{\ap - \alpha}  \int_0^t \E \left [   |  \mu_{s}(f)- \tilde  \mu_{s}(f) |^{\ap}\indiq_{\{s < T_K\}}\right] ds \\
 \leq  C \frac{K^{\ap-\alpha}}{ \ap - \alpha} \int_0^t \E \left [   W_1(  \mu_{s}, \tilde\mu_{s} )\indiq_{\{s < T_K\}}\right] ds .
\end{multline} 
 \item{ii)} For all $0< \am < \alpha<1,$ we have for any $ K > 0,$ 
 \begin{multline}\label{eq:controlI2}
 \E\left [  |M_t(\mu,\tilde \mu)|\indiq_{\{t < T_K\}}\right ]  \le C \frac{K^{1-\alpha}}{ 1- \alpha} \int_0^t\E\left[|\mu_s(f) - \tilde \mu_s(f)|\indiq_{\{s < T_K\}}\right]ds \\
\le C \frac{K^{1-\alpha}}{ 1- \alpha}  \int_0^t\E\left[W_{d_\am}(\mu_s,\tilde \mu_s)\indiq_{\{s < T_K\}}\right]ds .
 \end{multline}
 \end{description}
\end{lemma}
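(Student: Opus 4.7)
The plan in both cases is the same: exploit the observation that, on the event $\{t<T_K\}$, the stable process $S^\alpha$ has no jumps exceeding $K$ on $[0,t]$, so in the stochastic integral defining $M_t(\mu,\tilde\mu)$ we may freely restrict the jump measure $M$ (resp.\ $\tilde M$) to $\{|z|\le K\}$ without changing the value of $M_{t\wedge T_K}(\mu,\tilde\mu)$. Once the big jumps are cut, the L\'evy measure $\nu^\alpha$ restricted to $\{|z|\le K\}$ has all moments of order $> \alpha$ finite, with the explicit computation
\[
\int_{\{0<|z|\le K\}} |z|^{\beta}\,\nu^\alpha(dz) \;=\; (a_++a_-)\frac{K^{\beta-\alpha}}{\beta-\alpha}
\]
for any $\beta>\alpha$; this is exactly the source of the factor $K^{\ap-\alpha}/(\ap-\alpha)$ in (i) and of the factor $K^{1-\alpha}/(1-\alpha)$ in (ii).

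For part (i), $1<\alpha<\ap\le 2$, I would introduce the stopped, truncated compensated integral
\[
\widetilde M^K_t(\mu,\tilde\mu) \;\coloneqq\; \int_{[0,t]\times\{0<|z|\le K\}} \indiq_{\{s\le T_K\}}\bigl(\mu^{1/\alpha}_{s-}(f)-\tilde\mu^{1/\alpha}_{s-}(f)\bigr)\,z\,\tilde M(ds,dz),
\]
which, on $\{t<T_K\}$, coincides with $M_t(\mu,\tilde\mu)$. Since the integrand is predictable and $\ap\in(1,2]$, the Bichteler--Jacod / Kunita inequality for compensated Poisson stochastic integrals in $L^{\ap}$ yields
\[
\E\bigl[|\widetilde M^K_t(\mu,\tilde\mu)|^{\ap}\bigr] \le C\,\E\!\int_0^t\!\int_{\{0<|z|\le K\}} \indiq_{\{s\le T_K\}}\bigl|\mu^{1/\alpha}_{s-}(f)-\tilde\mu^{1/\alpha}_{s-}(f)\bigr|^{\ap}|z|^{\ap}\,\nu^\alpha(dz)\,ds.
\]
Performing the $z$-integral gives $C\,K^{\ap-\alpha}/(\ap-\alpha)$, and then \eqref{eq:controlmuf} with $p=\ap$ reduces the integrand to $|\mu_s(f)-\tilde\mu_s(f)|^{\ap}$, which is the first estimate of \eqref{eq:controlI1}. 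For the second estimate I use that $|\mu_s(f)-\tilde\mu_s(f)|\le 2\|f\|_\infty$ is bounded, so $|\cdot|^{\ap}\le C|\cdot|$ (recall $\ap\ge 1$), and then the Kantorovich--Rubinstein duality together with the Lipschitz continuity of $f$ (Assumption \ref{ass:f}\ref{itm:fLip}) gives $|\mu_s(f)-\tilde\mu_s(f)|\le C\,W_1(\mu_s,\tilde\mu_s)$.

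For part (ii), $0<\am<\alpha<1$, the integral is uncompensated and we work directly in $L^1$. After the same truncation/stopping trick, the triangle inequality under the expectation gives
\[
\E\bigl[|M_t(\mu,\tilde\mu)|\indiq_{\{t<T_K\}}\bigr] \le \E\!\int_0^t\!\int_{\{0<|z|\le K\}} \indiq_{\{s\le T_K\}}\bigl|\mu^{1/\alpha}_{s-}(f)-\tilde\mu^{1/\alpha}_{s-}(f)\bigr|\,|z|\,\nu^\alpha(dz)\,ds,
\]
the $z$-integral contributing $C\,K^{1-\alpha}/(1-\alpha)$ (this is where $\alpha<1$ is essential). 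The first bound of \eqref{eq:controlI2} then follows from \eqref{eq:controlmuf} with $p=1$, and the second one from \eqref{eq:controlmuf2}, which upgrades the control by $|\mu_s(f)-\tilde\mu_s(f)|$ into a control by $W_{d_{\am}}(\mu_s,\tilde\mu_s)$ thanks to the fact that $f/C$ is $1$-Lipschitz with respect to $d_{\am}$ (using boundedness and Lipschitzness of $f$).

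The only genuinely delicate point is the interplay between the stopping time $T_K$ and the stochastic integral, i.e.\ checking that stopping at $T_K$ and truncating to $\{|z|\le K\}$ are interchangeable in the expectation; this is standard but should be stated cleanly so that the Bichteler--Jacod bound can be applied to a predictable integrand whose jumps are uniformly bounded by $K$ times a bounded factor, making every integrability condition trivially satisfied.
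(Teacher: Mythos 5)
Your overall strategy is the paper's: split off the jumps of size larger than $K$, use that on $\{t<T_K\}$ they do not occur, compute $\int_{B^\ast_K}|z|^{\beta}\nu^\alpha(dz)$ to produce the factor $K^{\beta-\alpha}/(\beta-\alpha)$, and then pass from $|\mu_s^{1/\alpha}(f)-\tilde\mu_s^{1/\alpha}(f)|$ to $|\mu_s(f)-\tilde\mu_s(f)|$ and to the Wasserstein distances via \eqref{eq:controlmuf} and \eqref{eq:controlmuf2}. Part (ii) is correct and matches the paper. Your use of the Bichteler--Jacod inequality in $L^{\ap}$ in part (i) is a legitimate variant of the paper's argument (BDG followed by sub-additivity of $x\mapsto x^{\ap/2}$); both yield the same moment bound for the compensated small-jump integral.

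There is, however, one genuine gap in part (i): the claim that the truncated compensated integral $\widetilde M^K_t(\mu,\tilde\mu)$ coincides with $M_t(\mu,\tilde\mu)$ on $\{t<T_K\}$ is false in general. Writing $\tilde M = M - \nu^\alpha(dz)\,ds$, one has
\begin{multline*}
M_t(\mu,\tilde\mu)=\int_{[0,t]\times B^\ast_K}\bigl(\mu^{1/\alpha}_{s-}(f)-\tilde\mu^{1/\alpha}_{s-}(f)\bigr)z\,\tilde M(ds,dz)
+\int_{[0,t]\times B^c_K}\bigl(\mu^{1/\alpha}_{s-}(f)-\tilde\mu^{1/\alpha}_{s-}(f)\bigr)z\,M(ds,dz)\\
-\int_0^t\bigl(\mu^{1/\alpha}_{s}(f)-\tilde\mu^{1/\alpha}_{s}(f)\bigr)\,ds\int_{B^c_K}z\,\nu^\alpha(dz).
\end{multline*}
On $\{t<T_K\}$ the middle term vanishes, but the last term --- the compensator of the big jumps, equal to $-M_K\int_0^t(\mu^{1/\alpha}_{s}(f)-\tilde\mu^{1/\alpha}_{s}(f))\,ds$ with $M_K$ as in \eqref{eq:def:C_K} --- does not, unless $a_+=a_-$. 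This is precisely the term $M_t^2(\mu,\tilde\mu)$ in the paper's proof, which is handled separately by Jensen's inequality and absorbed into the same bound (at the cost of a constant $C_t$). The omission is easy to repair, but as written your reduction to a purely small-jump martingale drops a nonzero drift contribution.
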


\begin{proof} 
We write 
\[ M_t(\mu,\tilde \mu) \eqqcolon M_t^{1}(\mu,\tilde \mu)+M_t^{2}(\mu,\tilde \mu), \]
where $M_t^1(\mu,\tilde \mu)$ corresponds to the integral over $[0,t]\times  B^\ast_K$ and $M_t^2(\mu,\tilde \mu)$ to that over $[0,t]\times  B^c_K.$ 
Let  us first consider the case $1<\alpha<\ap \leq  2.$ 
Using first the Burkholder-Davis-Gundy inequality and then the sub-additivity of $x\to x^{\ap/2},$ we obtain
\begin{multline*}
\E[|M_t^{1}(\mu,\tilde \mu)|^{\ap}\indiq_{\{t < T_K\}}] \le  \E\left[ \left |\int_{[0,t\wedge T_K]\times B^\ast_K}\left(\mu_s^{1/\alpha}(f)-\tilde\mu^{1/\alpha}_{s}(f)\right)z\tilde M(ds,dz)\right |^{\ap} \right]\\
\leq C \E\left[ \left (\int_{[0,t\wedge T_K]\times B^\ast_K}\left(\mu_s^{1/\alpha}(f)-\tilde\mu_{s}^{1/\alpha}(f)\right)^2 z^2M(ds,dz)\right )^{\ap/2} \right]\\
\leq  C \int_0^t\E\left[|\mu_s^{1/\alpha}(f)-\tilde\mu_{s}^{1/\alpha}(f)|^{\ap}\indiq_{\{s < T_K\}}\right]ds\int_{B^\ast_K} \frac {|z|^{\ap}}{|z|^{1+\alpha}}dz \\
=  C\frac{K^{\ap-\alpha}}{ \ap - \alpha} \int_0^t\E[|\mu_s^{1/\alpha}(f)-\tilde\mu_{s}^{ 1/\alpha}(f)|^{\ap}\indiq_{\{s < T_K\}}]ds. 
\end{multline*}
As for $M_t^{2}(\mu,\tilde \mu)$, using Jensen's inequality,  
\begin{multline*}
\E[|M_t^{2}(\mu,\tilde \mu)|^{\ap}\indiq_{\{t < T_K\}}] \le  \E\left[\left|\int_0^t(\mu_s^{1/\alpha}(f)-\tilde\mu_s^{1/\alpha}(f))\indiq_{\{s < T_K\}}ds\int_{B^c_K}\frac{z}{|z|^{1+\alpha}} dz\right|^{\ap}\right]\\
\leq C_t\int_0^t\E\left[ |\mu_s^{ 1/\alpha}(f)-\tilde\mu_s^{ 1/\alpha}(f)|^{\ap}\indiq_{\{s < T_K\}}\right]ds .
\end{multline*}

 Now, in both previous bounds we use the Lipschitz property of $z\mapsto z^{1/\alpha}$ on $[\underline{f}, \infty)$ to obtain the first inequality in \eqref{eq:controlI1} and we use \eqref{eq:controlmuf} to obtain the second bound in \eqref{eq:controlI1}: 
\begin{multline*}
\E\left[|M_t(\mu,\tilde \mu)|^{\ap}\indiq_{\{t < T_K\}}\right]\leq C \E\left[|M_t^1(\mu,\tilde \mu)|^{\ap}\indiq_{\{t < T_K\}}\right]+C \E\left[|M_t^2(\mu,\tilde \mu)|^{\ap}\indiq_{\{t < T_K\}}\right]\\
\leq  C_t \frac{K^{\ap-\alpha}}{ \ap - \alpha} \int_0^t\E[|\mu_s(f)-\tilde\mu_{s}(f)|^{\ap}\indiq_{\{s < T_K\}}]ds \leq C_t \frac{K^{\ap-\alpha}}{ \ap - \alpha}  \int_0^t \E \left [   W_1(  \mu_{s}, \tilde \mu_{s} )\indiq_{\{s < T_K\}}\right] ds .
\end{multline*}

Let now $0<\alpha<1$ so that $M_t^{2}(\mu,\tilde \mu),$ being a non-compensated integral, equals zero on the event $ \{ t < T_K \}.$  Using that $z\to z^{1/\alpha}$ is Lipschitz on $[0,\norme{f}_\infty],$ we obtain  
\begin{eqnarray*}
\E[|M_t(\mu,\tilde \mu)|\indiq_{\{t < T_K\}}]&=&\E[|M_t^1(\mu,\tilde \mu)|\indiq_{\{t < T_K\}}]\\
&\leq& \E\left[ \int_{[0,t\wedge T_K]\times B^\ast_K}\left|\mu_s^{1/\alpha}(f)-\tilde\mu^{1/\alpha}_{s}(f)\right | |z| M(ds,dz) \right]\\
&\leq& C\E\left[ \int_{[0,t]}\left|\mu_s^{1/\alpha}(f)-\tilde\mu_{s}^{1/\alpha}(f)\right |\indiq_{\{s<T_K\}} ds \int_{B^\ast_K}\frac{|z|}{|z|^{\alpha+1}} dz \right]\\
&\leq &C \frac{K^{1-\alpha}}{1 - \alpha} \E\left[ \int_{[0,t]}\left|\mu_s(f)-\tilde\mu_{s}(f)\right | \indiq_{\{s < T_K\}} ds    \right] .
\end{eqnarray*}
Finally, using \eqref{eq:controlmuf2}, this last expression is in turn upper bounded by 
\[ C\frac{K^{1-\alpha}}{1 - \alpha}  \int_0^t\E\left[W_{d_\am}(\mu_s,\tilde \mu_s)\indiq_{\{s < T_K\}}\right]ds.\] 
\end{proof}

\begin{lemma}\label{lem:controlb}
Let 
\[
B_t (X, \tilde X, \mu , \tilde \mu) \coloneqq \int_0^t[   b ( X_{s} ,  \mu_{s}) -  b ( \tilde X_{s}, \tilde \mu_s  )] ds .
\] 
\begin{description}
\item{i)} If $1<\alpha<2,$ then for any $1<\alpha<\ap \le 2$ and for any $ K > 0, $ 
\begin{multline*} 
\E[|B_t (X, \tilde X, \mu , \tilde \mu)|^{\ap}\indiq_{\{t < T_K\}}]\leq   \\
  C_t   \left\{ \begin{array}{ll}
\E \int_0^t  \indiq_{\{s < T_K\}} \left(  |  X_s - \tilde X_s |^{\ap} + W_{1}^{\ap} ( \mu_s, \tilde \mu_s ) \right) ds .\\
\E \int_0^t  \indiq_{\{s < T_K\}} \left( | X_s - \tilde X_s | + W_{1} ( \mu_s, \tilde \mu_s )\right) ds.
\end{array}
\right.
\end{multline*}
\item{ii)} If $0<\alpha<1,$ then for any $0<\am<\alpha$ and for any $ K > 0, $
\begin{equation*}
\E[|B_t (X, \tilde X, \mu , \tilde \mu)|\indiq_{\{t < T_K \}}]\leq C 
\E\int_0^t \indiq_{\{s<T_K\}} (d_\am(X_s,\tilde X_s)+W_{d_\am}(\mu_s,\tilde\mu_s))ds.
\end{equation*} 

\end{description}
In particular, if $X=X^{N,i},$ $\tilde X=\tilde X^{N,i},$ where $(X^{N,i})_{i=1}^N$, $(\tilde X^{N,i})_{i=1}^N$ are two systems  defined on the same probability space and $(X^{N,i},\tilde X^{N,i})_{i=1}^N$ is exchangeable, and if moreover $\mu=\mu^{N,X}$, $\tilde\mu=\tilde\mu^{N,X}$ are the empirical measures of $X$ and $\tilde X,$ then, for any $1<\alpha<\ap \le 2$ and for any $ K > 0, $
 \begin{equation}\label{eq:b2}
\E[|B_t (X, \tilde X, \mu , \tilde \mu)|^{\ap}\indiq_{\{t < T_K\}}]\leq C  \left\{
\begin{array}{ll}
\int_0^t\E[|X_s-\tilde X_s|^{\ap}\indiq_{\{s < T_K\}}]ds,\\
\int_0^t \E[|X_s - \tilde X_s|\indiq_{\{s<T_K\}}] ds ,
\end{array}
\right.
\end{equation}
and, for any $0<\am<\alpha < 1 $ and for any $ K > 0, $
\begin{equation}\label{eq:b3}
\E[|B_t (X, \tilde X, \mu , \tilde \mu)|\indiq_{\{t < T_K\}}]\leq C
\int_0^t\E[d_\am(X_s, \tilde X_s)\indiq_{\{s < T_K\}}]ds .
\end{equation}
\end{lemma}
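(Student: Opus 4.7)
\textbf{Proof plan for Lemma \ref{lem:controlb}.}

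The strategy is in each case the same: write $B_t$ as an integral, apply a suitable inequality to move the power inside the time integral, and then use the Lipschitz hypothesis on $b$ combined with $\indiq_{\{t<T_K\}}\le\indiq_{\{s<T_K\}}$ for $s\le t$ to conclude. The only real subtlety is dealing simultaneously with the two alternative bounds in case (i), which requires using the boundedness of $b$ as an auxiliary tool, and then translating the measure-dependent Wasserstein terms into expectations of one-particle distances when $\mu,\tilde\mu$ are empirical measures on exchangeable couples.

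\emph{Case (i), $1<\alpha<2$.} For the first bound I would apply Jensen's inequality to get
\[
|B_t(X,\tilde X,\mu,\tilde\mu)|^{\ap}\le t^{\ap-1}\int_0^t|b(X_s,\mu_s)-b(\tilde X_s,\tilde\mu_s)|^{\ap}ds,
\]
multiply by $\indiq_{\{t<T_K\}}\le \indiq_{\{s<T_K\}}$ and then invoke Assumption \ref{ass:b}\ref{itm:bLip} together with the elementary inequality $(a+b)^{\ap}\le 2^{\ap-1}(a^{\ap}+b^{\ap})$ to produce the bound in terms of $|X_s-\tilde X_s|^{\ap}+W_1^{\ap}(\mu_s,\tilde\mu_s)$. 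For the second bound I use the boundedness of $b$ (Assumption \ref{ass:b}\ref{itm:bbdd}): since $|B_t|\le 2t\|b\|_\infty$, we have $|B_t|^{\ap}\le (2t\|b\|_\infty)^{\ap-1}|B_t|$; now apply Minkowski and the Lipschitz bound on $b$ without raising to the power $\ap$, yielding a $C_t$ times $\int_0^t(|X_s-\tilde X_s|+W_1(\mu_s,\tilde\mu_s))ds$ under the indicator.

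\emph{Case (ii), $0<\alpha<1$.} Here the argument is immediate: $|B_t|\le\int_0^t|b(X_s,\mu_s)-b(\tilde X_s,\tilde\mu_s)|ds$, and the reinforced Lipschitz Assumption \ref{ass:bqLipp+psiqLip} gives
\[
|b(X_s,\mu_s)-b(\tilde X_s,\tilde\mu_s)|\le C\bigl(d_{\am}(X_s,\tilde X_s)+W_{d_{\am}}(\mu_s,\tilde\mu_s)\bigr).
\]
Multiplying by the monotone indicator and taking expectation yields the claim.

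\emph{Empirical-measure specialisation.} When $X=X^{N,i}$, $\tilde X=\tilde X^{N,i}$ and $\mu=\mu^{N,X}$, $\tilde\mu=\tilde\mu^{N,X}$, the coupling $(X^{N,j},\tilde X^{N,j})_{j=1}^N$ provides an admissible transport plan, so
\[
W_1(\mu^{N,X}_s,\tilde\mu^{N,X}_s)\le \frac{1}{N}\sum_{j=1}^N|X^{N,j}_s-\tilde X^{N,j}_s|,\qquad W_{d_{\am}}(\mu^{N,X}_s,\tilde\mu^{N,X}_s)\le \frac{1}{N}\sum_{j=1}^N d_{\am}(X^{N,j}_s,\tilde X^{N,j}_s).
\]
By Jensen's inequality (for the $\ap$-power case) and exchangeability, after taking expectations the $W_1$- and $W_{d_{\am}}$-terms are bounded by the corresponding quantity for the single particle $i$, producing \eqref{eq:b2} and \eqref{eq:b3}.

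The main obstacle I anticipate is merely bookkeeping: keeping track of when Jensen versus boundedness of $b$ is applied so as to produce both alternatives in \eqref{eq:b2}, and making sure the Lipschitz constant absorbs the factor $t^{\ap-1}$ (respectively $(2t\|b\|_\infty)^{\ap-1}$) into the time-dependent constant $C_t$. There is no real analytic difficulty beyond this.
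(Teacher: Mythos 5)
Your proposal is correct and follows essentially the same route as the paper's (very brief) proof: Jensen/boundedness of $b$ plus its Lipschitz property for items i) and ii), and the coupling $\frac1N\sum_{j}\delta_{(X^{N,j}_s,\tilde X^{N,j}_s)}$ together with exchangeability for the empirical-measure specialisation \eqref{eq:b2}--\eqref{eq:b3}. The only detail worth making explicit, which you do correctly, is that case ii) uses the strengthened Assumption \ref{ass:bqLipp+psiqLip} rather than the plain Lipschitz Assumption \ref{ass:b}\ref{itm:bLip}.
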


\begin{proof}
The inequalities of item i) and item  ii) follow  immediately from the Lipschitz property of $b,$ using moreover the fact that $b$ is also bounded  in case $ \alpha > 1 .$ 
 Equations \eqref{eq:b2} and \eqref{eq:b3}  follow from the  fact that
$\frac 1N\sum_{i=1}^N\delta_{(X^{N,i}_s,\tilde X^{N,i}_s)}$ is a coupling of   $\frac 1N\sum_{i=1}^N\delta_{X^{N,i}_s}$ and $\frac 1N\sum_{i=1}^N\delta_{\tilde X_s^{N,i} }$ and the exchangeability of $(X^{N,i},\tilde X^{N,i})_{i=1}^N.$
Of course both inequalities hold also without indicator, but in the sequel we need them in this form.
\end{proof}

\begin{lemma}\label{lem:controlpsi}
For $\alpha<1$, let 
\[
\Psi_t (X, \tilde X, \mu , \tilde \mu) \coloneqq \int_{[0,t]\times \R_+} \left[   \psi(X_{s-},\mu_{s-}) \indiq_{\{z\leq f(X_{s-})\}} - \psi(\tilde X_{s-},\tilde\mu_{v-}) \indiq_{\{z\leq f(\tilde X_{s-})\}} \right] \bar\pi(ds,dz) .
\] 
Then, for any $0<\am<\alpha$ and any $ K > 0, $ 
\begin{equation}\label{eq:controlpsi1}
\E[|\Psi_t(X,\tilde X, \mu, \tilde \mu)|\indiq_{\{t < T_K\}}] \leq C \int_0^t \E\left[ ( d_\am(X_s,\tilde X_s) + W_{d_\am}(\mu_s, \tilde \mu_s)) \indiq_{\{t<T_K\}}\right] ds.
\end{equation}
In particular, if $X=X^{N,i},$ $\tilde X=\tilde X^{N,i},$ where $(X^{N,i})_{i=1}^N$, $(\tilde X^{N,i})_{i=1}^N$ are two systems  defined on the same probability space and $(X^{N,i},\tilde X^{N,i})_{i=1}^N$ is exchangeable, and if moreover $\mu=\mu^{N,X}$, $\tilde\mu=\tilde\mu^{N,X}$ are the empirical measures of $X$ and $\tilde X,$
\begin{equation}\label{eq:controlpsi2}
\E[|\Psi_t(X,\tilde X, \mu, \tilde \mu)|\indiq_{\{t < T_K\}}] \leq C \int_0^t \E\left[  d_\am(X_s,\tilde X_s)  \indiq_{\{t<T_K\}}\right] ds.
\end{equation}
\end{lemma}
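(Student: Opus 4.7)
The strategy mirrors the final step of the uniqueness proof for $\alpha<1$ in Section \ref{sec:strongexist} together with the coupling argument used to derive \eqref{eq:b2}--\eqref{eq:b3} in Lemma \ref{lem:controlb}. First, I would remove the absolute value by splitting the integrand through the triangle inequality: on the common region $\{z\leq f(X_{s-})\wedge f(\tilde X_{s-})\}$ the two terms combine into $|\psi(X_{s-},\mu_{s-}) - \psi(\tilde X_{s-},\tilde\mu_{s-})|$, while on the symmetric difference $\{f(X_{s-})\wedge f(\tilde X_{s-}) < z \leq f(X_{s-})\vee f(\tilde X_{s-})\}$ only one indicator fires and the integrand is dominated by $\|\psi\|_\infty$.

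Next, I would take the expectation. Since the event $\{t<T_K\}$ is $\sigma(S^\alpha)$-measurable, hence independent of $\bar\pi$, and is contained in $\{s<T_K\}$ for $s\leq t$, the indicator can be inserted pointwise inside the time integral. Using the intensity $ds\,dz$ of $\bar\pi$, the common-region piece becomes
\[
\E\int_0^t |\psi(X_s,\mu_s) - \psi(\tilde X_s,\tilde\mu_s)|\, (f(X_s)\wedge f(\tilde X_s))\, \indiq_{\{s<T_K\}}\, ds,
\]
which by the boundedness of $f$ and Assumption \ref{ass:bqLipp+psiqLip} is at most $C\E\int_0^t (d_\am(X_s,\tilde X_s) + W_{d_\am}(\mu_s,\tilde\mu_s))\indiq_{\{s<T_K\}}\,ds$. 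The symmetric-difference piece yields
\[
\|\psi\|_\infty \E\int_0^t |f(X_s) - f(\tilde X_s)|\, \indiq_{\{s<T_K\}}\, ds,
\]
and since $f$ is bounded and Lipschitz one has $|f(x)-f(y)|\leq C d_\am(x,y)$ (as already observed after \eqref{eq:controlmuf2}), so this piece fits inside the same bound. Summing gives \eqref{eq:controlpsi1}.

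Finally, for the exchangeable empirical-measure case \eqref{eq:controlpsi2}, the empirical coupling $\frac{1}{N}\sum_{i=1}^N \delta_{(X^{N,i}_s,\tilde X^{N,i}_s)}$ yields $W_{d_\am}(\mu^{N,X}_s,\tilde\mu^{N,X}_s) \leq \frac{1}{N}\sum_i d_\am(X^{N,i}_s,\tilde X^{N,i}_s)$, whose expectation equals $\E[d_\am(X^{N,1}_s,\tilde X^{N,1}_s)]$ by exchangeability; this absorbs the Wasserstein term into the pointwise $d_\am$ contribution. I do not expect any genuine obstacle beyond careful bookkeeping of the symmetric difference in the jump-rate regions together with the conversion, via boundedness of $f$ and $\psi$ and the sub-additivity \eqref{eq:dqsubadditive} of $\|\cdot\|_{d_\am}$, between Lipschitz bounds in the Euclidean metric and bounds with respect to $d_\am$.
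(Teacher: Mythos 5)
Your proof is correct and follows essentially the same route as the paper: the paper's proof of this lemma simply refers to the argument of Lemma \ref{lem:controlb} together with the explicit split of the $\psi$-term already carried out in the uniqueness proof of Section \ref{sec:strongexist} (bounding the integrand by $\|\psi\|_\infty\,|f(X_{s-})-f(\tilde X_{s-})| + \|f\|_\infty\,|\psi(X_{s-},\mu_{s-})-\psi(\tilde X_{s-},\tilde\mu_{s-})|$, then using $|f(x)-f(y)|\le C d_{\am}(x,y)$ and Assumption \ref{ass:bqLipp+psiqLip}), and your treatment of the exchangeable empirical-measure case via the coupling $\frac1N\sum_i\delta_{(X^{N,i}_s,\tilde X^{N,i}_s)}$ is exactly that of Lemma \ref{lem:controlb}. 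The only cosmetic remark is that inserting $\indiq_{\{s<T_K\}}$ is justified purely by the inclusion $\{t<T_K\}\subseteq\{s<T_K\}$ and positivity of the integrand, not by independence of $\bar\pi$ from $S^\alpha$ (the processes $X,\tilde X$ may depend on the stable noise), but this does not affect the argument.
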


\begin{proof}
The proof is analogous to the one of Lemma \ref{lem:controlb}, using Assumption \ref{ass:bqLipp+psiqLip} and using the boundedness and Lipschitz continuity of $f$ and the Kantorovitch-Rubinstein duality. 

\end{proof}

\subsection{Introducing an auxiliary process}
In what follows, to clearly distinguish the empirical measures of the respective auxiliary processes that we shall consider, we write $\mu^{N,X}$  
 for the empirical measure of the finite system $(X^{N,i})_{1\leq i \leq N}$ (see \eqref{eq:micro:dyn}). 
 
Based on Theorem \ref{prop:representation_finite_syst}, we
introduce for all $N\in \N^\ast$, $i=1,\dots,N,$ the auxiliary process
\begin{multline}\label{eq:def:Y^{N,i}}
Y^{N,i}_t = X^{i}_0 + \int_0^t b(X^{N,i}_s, \mu^{N,X}_s) ds +\int_{[0,t]\times\R_+} \psi (X^{N, i}_{s-}, \mu^{N,X}_{{ s-}} )  \indiq_{ \{ z \le  f ( X^{N, i}_{s-}) \}} \bar\pi^i (ds,dz) \\
+  \int_{[0,t]} \left(\mu^{N,X}_{s-}(f)\right)^{1/\alpha} d S^{N,\alpha}_s ,
\end{multline}
where we recall that $ \psi ( \cdot ) \equiv 0 $ in case $ \alpha > 1.$ Let $ T_K^N = \inf \{ t \geq 0 : | \Delta S_t^{N, \alpha} | > K \} .$ In what follows, $C_{ t } $ denotes a constant (that may change from line to line) which is non-decreasing as a function of $t$,  and $r_t(N,\delta)$ is given in Theorem \ref{thm:prop:chaos}.

\begin{proposition}\label{prop:secondrepr}
Grant Assumptions \ref{ass:b}--\ref{ass:nu0+}. Then, on an extension of $(\Omega, \mathcal{A},\mathbf{P})$ depending on $N$ and $ \delta, $ 
\begin{description}
\item [i)]
if $\alpha > 1,$ then we have for any $\am ,\ap$ such that  $1<\am <\alpha<\ap<2,$
\[
  \E[|X_t^{N,i}-Y^{N,i}_t|\indiq_{\{t < T^N_K\}}]\leq C_t r_t^{X,Y},
\]
where
\[
  r_t^{X,Y}\coloneqq \left( \frac{K^{\ap-\alpha}}{ \ap - \alpha }\right)^{1/ \ap }  \left( N^{1 - \am/\alpha} \delta \right)^{1/\ap}  + r_t(N, \delta);
\]
\item [ii)] if $0<\alpha<1,$ then we have for any $\am $ such that $0<\am <\alpha<1,$ for any $ K > 0, $ 
\begin{equation*}
  \E[  d_\am (X_t^{N,i}, Y^{N,i}_t) \indiq_{\{t < T^N_K\}}]\leq  C_t  \frac{K^{1-\alpha}}{1-\alpha} r_t(N, \delta) .
 \end{equation*}
\end{description}
\end{proposition}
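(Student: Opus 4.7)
The proof rests on exploiting the representation of $A^N_t$ provided by Theorem \ref{prop:representation_finite_syst}. Subtracting the defining SDE \eqref{eq:def:Y^{N,i}} of $Y^{N,i}$ from \eqref{eq:micro:dyn} produces an exact cancellation of the initial condition, the drift term involving $b$, and the main-jump term involving $\psi$, since both processes use the same $X^{N,i}$ and $\mu^{N,X}$ in those positions. What remains is the difference of the two interaction terms, and applying Theorem \ref{prop:representation_finite_syst} to rewrite $A^N_t$ gives the decomposition
\[
X^{N,i}_t - Y^{N,i}_t = R^N_t + M^{\mathrm{disc}}_t, \qquad M^{\mathrm{disc}}_t \coloneqq \int_0^t \bigl[(\mu^{N,X}_{\tau_s}(f))^{1/\alpha} - (\mu^{N,X}_{s-}(f))^{1/\alpha}\bigr]\,dS^{N,\alpha}_s.
\]
The stochastic integral $M^{\mathrm{disc}}_t$ captures exactly the error produced by freezing $\mu^{N,X}$ at the discretisation times inside the stable integral.

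The contribution of $R^N_t$ is absorbed directly by Theorem \ref{prop:representation_finite_syst}: its $L^1$-norm if $\alpha>1$, respectively its $d_{\am}$-norm if $\alpha<1$, is bounded by $C_t r_t(N,\delta)$, which is precisely one of the two ingredients of $r_t^{X,Y}$. The new work is therefore the control of $M^{\mathrm{disc}}_t$, for which I would apply Lemma \ref{lem:controlM} to the processes $\mu_s = \mu^{N,X}_{\tau_s}$ and $\tilde\mu_s = \mu^{N,X}_s$, with $S^{N,\alpha}$ and $T^N_K$ playing the roles of $S^\alpha$ and $T_K$. This reduces matters to bounding a time integral of $\E[|\mu^{N,X}_{\tau_s}(f)-\mu^{N,X}_s(f)|^p\indiq_{\{s<T^N_K\}}]$, with $p=\ap$ if $\alpha>1$ and $p=1$ if $\alpha<1$.

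For $\alpha>1$ the key inequality is the interpolation
\[
|f(x)-f(y)|^{\ap} \leq (2\|f\|_\infty)^{\ap-\am}\,\|f\|_{\mathrm{Lip}}^{\am}\,|x-y|^{\am},
\]
valid for any $\am\in(1,\alpha)$. Combining it with Jensen's inequality inside the empirical average and with exchangeability, Proposition \ref{prop:discretis0}(ii) yields
\[
\E\bigl[|\mu^{N,X}_{\tau_s}(f) - \mu^{N,X}_s(f)|^{\ap}\bigr] \leq C\,\E\bigl[|X^{N,1}_{\tau_s} - X^{N,1}_s|^{\am}\bigr] \leq C\,N^{1-\am/\alpha}\,\delta.
\]
Feeding this into Lemma \ref{lem:controlM}(i) and then upgrading from $L^{\ap}$ to $L^1$ by Jensen, $\E[|M^{\mathrm{disc}}_t|\indiq_{\{t<T^N_K\}}] \leq (\E[|M^{\mathrm{disc}}_t|^{\ap}\indiq_{\{t<T^N_K\}}])^{1/\ap}$, produces precisely the extra contribution $(K^{\ap-\alpha}/(\ap-\alpha))^{1/\ap}(N^{1-\am/\alpha}\delta)^{1/\ap}$ in $r_t^{X,Y}$. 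In the case $\alpha<1$, using $\|\cdot\|_{d_{\am}} \leq |\cdot|$ together with Lemma \ref{lem:controlM}(ii), exchangeability, the Lipschitz property of $f$ and Proposition \ref{prop:discretis0}(i) yields an integrand bound of order $N^{1-\am/\alpha}\delta$, which is dominated by the $N^{2(1-\am/\alpha)}\delta$ term already present in $r_t(N,\delta)$, giving the announced control after multiplication by $K^{1-\alpha}/(1-\alpha)$.

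The main obstacle is the norm mismatch in the case $\alpha>1$: the small-jump part of the stable integral naturally demands an $L^{\ap}$ estimate on the integrand, whereas Proposition \ref{prop:discretis0} only delivers $L^{\am}$-control of the time-increments of $X^{N,i}$. The interpolation of $|f(x)-f(y)|^{\ap}$ against $|x-y|^{\am}$ displayed above is what reconciles the two norms without destroying the rate, and it is precisely this step that produces the exponents $N^{1-\am/\alpha}$ and $\delta^{1}$ appearing inside $(\,\cdot\,)^{1/\ap}$ in the definition of $r_t^{X,Y}$.
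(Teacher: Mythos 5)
Your decomposition is essentially the paper's, and your treatment of the two terms you do identify ($R^N_t$ via Theorem \ref{prop:representation_finite_syst}, and the discretisation martingale via Lemma \ref{lem:controlM}, the interpolation $|f(x)-f(y)|^{\ap}\le C|f(x)-f(y)|^{\am}\le C|x-y|^{\am}$, exchangeability, and Proposition \ref{prop:discretis0}) matches the paper's argument step for step, including the final Jensen upgrade from $L^{\ap}$ to $L^1$ that produces the exponent $1/\ap$ in $r_t^{X,Y}$.

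However, your claimed identity $X^{N,i}_t - Y^{N,i}_t = R^N_t + M^{\mathrm{disc}}_t$ is false as stated: the interaction term in \eqref{eq:micro:dyn} sums over $j\neq i$, whereas $A^N_t$ in \eqref{eq:atn} sums over all $j=1,\dots,N$, including $j=i$. The correct decomposition therefore carries an additional self-interaction term
\[
E^{N,i}_t = \int_{[0,t]\times\R_+\times\R}\frac{u}{N^{1/\alpha}}\indiq_{\{z\le f(X^{N,i}_{s-})\}}\pi^i(ds,dz,du),
\]
which does not cancel and must be estimated separately. This is not fatal: by the boundedness of $f$ and the finiteness of the moments of order $p<\alpha$ of $\nu$, one gets $\E[|E^{N,i}_t|]\le CtN^{-1/\alpha}$ for $\alpha>1$ and $\E[|E^{N,i}_t|^{\am}]\le CtN^{-\am/\alpha}$ for $\alpha<1$, and since $N\delta\to\infty$ both bounds are dominated by $r_t(N,\delta)$, so the stated rates survive. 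But as written your "exact cancellation" step is wrong, and the proof needs this extra term and its (easy) control to be complete.
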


\begin{proof}[Proof of Proposition \ref{prop:secondrepr}] 
Clearly, \[ X_t^{N,i }=Y_t^{N, i } + R_t^N - E_t^{N, i }-M_t^{N},\] 
where $R_t^N$ is defined in Theorem  \ref{prop:representation_finite_syst}, $M_t^N\coloneqq M_t( \mu^{N,X},\mu_{\tau}^{N,X})$ as in Lemma \ref{lem:controlM}  and 
\begin{equation}\label{eq:def:E^Ni_t}
E^{N,i}_t \coloneqq  \int_{[0,t]\times \R_+ \times\R} \frac{u}{N^{1/\alpha}} \indiq_{\{z\leq f(X^{N,i}_{s-})\}} \pi^i(ds,dz,du).
\end{equation}
Since $ f$ is bounded, in case $ \alpha > 1,$ we upperbound 
\begin{equation}\label{eq:E^Ni_t11}
 \E[| E^{N,i}_t |] \leq \frac{C}{N^{1/\alpha}}\E\left[\int_{[0,t]\times \R_+ \times \R} |u| \indiq_{ \{z \leq \| f\|_\infty \}  } \pi^i(ds,dz,du)  \right] \leq  C t N^{-1/\alpha},
\end{equation}
whereas if $0<\am <\alpha<1,$ using sub-additivity,
\begin{equation}\label{eq:E^Ni_t12}
 \E[| E^{N,i}_t |^\am] \leq \frac{C}{N^{\am/\alpha}}\E\left[\int_{[0,t]\times \R_+ \times \R} |u|^\am \indiq_{ \{z \leq \| f\|_\infty \}  } \pi^i(ds,dz,du)  \right] \leq  C t N^{-\am /\alpha},
\end{equation}
Finally, since $N\delta\to\infty,$ it is easy to see that both upper bounds \eqref{eq:E^Ni_t11},\eqref{eq:E^Ni_t12} are bounded by $C r_t(N,\delta).$

Let $1<\am<\alpha<\ap<2.$ Then using Lemma \ref{lem:controlM},  Jensen's inequality, exchangeability, the boundedness and Lipschitz continuity of $f$ and item ii) of Proposition \ref{prop:discretis0},  
\begin{eqnarray}\label{eq:MM}
\E\left [|M^N_t|^{\ap}\indiq_{\{t < T^N_K\}}\right] &\leq & C \frac{K^{\ap-\alpha}}{\ap-\alpha}\int_0^t\E\left[\left|\mu_s^{N,X}(f)-\mu_{\tau_s}^{N,X}(f)\right|^{\ap}\right]ds
\\ \nonumber
&\leq&  C \frac{K^{\ap-\alpha}}{\ap-\alpha} \int_0^t\E[|f(X^{N,1}_s)-f(X^{N,1}_{\tau_s})|^{\ap}]ds \\ \nonumber 
&\leq &  C \frac{K^{\ap-\alpha}}{\ap-\alpha} \int_0^t \E[|X^{N,1}_s- X^{N,1}_{\tau_s} |^\am ] ds \\ \nonumber
&\leq &  C_t \frac{K^{\ap-\alpha}}{\ap-\alpha}   N^{ 1 - \am/ \alpha} \delta 
.\nonumber
\end{eqnarray}
To go from the second line above to the third line, we have used that, since $f$ is bounded, $ |f(X^{N,1}_s)-f(X^{N,1}_{\tau_s})|^{\ap} \le C |f(X^{N,1}_s)-f(X^{N,1}_{\tau_s})|^{\am} ,$ which is in turn upper bounded by 
$ C \| f\|_{Lip} |X^{N,1}_s-X^{N,1}_{\tau_s}|^{\am}.$

Remember that Theorem \ref {prop:representation_finite_syst} gives
$\E[ | R_t^{N } | ] 
\leq  C_{t} r_t(N, \delta)$ in case $\alpha > 1.$ 
Using this bound and collecting \eqref{eq:MM}, \eqref{eq:E^Ni_t11} we obtain the claim $i).$
Whereas if $0<\alpha<1,$ the claim $ii)$ follows from upper bounding $ d_\am (X_t^{N,i}, Y^{N,i}_t ) \le \norme{R_t^N}_{d_\am} + |E_t^{N, i } |^\am + | M_t^N | , $ using then Theorem \ref {prop:representation_finite_syst}, \eqref{eq:E^Ni_t12}, Lemma \ref{lem:controlM}, exchangeability and Proposition \ref{prop:discretis0}: 
\begin{multline*}\label{eq:M}
\E\left [|M_t^N| \indiq_{\{t < T^N_K\}}\right]\leq  C \frac{K^{1-\alpha}}{1-\alpha} \int_0^t\E\left[\left|\mu_s^{N,X}(f)-\mu_{\tau_s}^{N,X}(f)\right|\right]ds  \\
\leq  C \frac{K^{1-\alpha}}{1-\alpha} \int_0^t\E[|f(X^{N,1}_s)-f(X^{N,1}_{\tau_s})|]ds\leq  C_t \frac{K^{1-\alpha}}{1-\alpha} N^{ 1 - \am/ \alpha } \delta ,
\end{multline*}
 noticing that this last term is upper bounded by $ \frac{K^{1-\alpha}}{1-\alpha}r_t(N,\delta).$
\end{proof}

\begin{proposition}\label{prop:YtoY}
Grant Assumptions \ref{ass:b}--\ref{ass:nu0+}.  Let $Y^N$ given by \eqref{eq:def:Y^{N,i}} and  
 $r_t^{X,Y}$ given by Proposition \ref{prop:secondrepr}. Write $\mu^{N,Y}$ for the empirical measure of $ Y^N.$ Then $Y^N$ can be represented as 
\begin{multline*}
Y^{N,i}_t = X^{i}_0 + \int_0^t b(Y^{N,i}_s, \mu^{N,Y}_s) ds + \int_{[0, t ] \times \R_+} \psi ( Y^{N,i}_{s-}, \mu^{N,Y}_{s-}) \indiq_{\{z\leq f(Y^{N,i}_{s-})\}} \bar \pi^i (ds, dz) \\
+  \int_{[0,t]} \left(\mu^{N,Y}_{s-}(f)\right)^{1/\alpha} d S^{N,\alpha}_s +R_t^{N,Y},\quad\mbox{and}\quad
\end{multline*}
\begin{description}
\item{i)}
if $1<\alpha<2,$ then for any $1<\alpha<\ap\leq2,$
\begin{equation*}
\E[|R_t^{N,Y}|^{\ap}\indiq_{\{t < T^N_K\}}]\leq    C_t \frac{K^{\ap-\alpha}}{\ap-\alpha} r_t^{X,Y}   ;    
\end{equation*}
\item {ii)} if $0<\alpha<1,$ then for any $ K > 0,$ 
\begin{equation*}
\E[|R_t^{N,Y}|\indiq_{\{t < T^N_K\}}]\leq  C_t  { \left( \frac{K^{1-\alpha}}{1-\alpha} \right)^2 r_t(N, \delta) }.
\end{equation*}
\end{description}
\end{proposition}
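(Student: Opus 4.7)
The plan is to write $R_t^{N,Y}$ explicitly as the difference between the defining expression \eqref{eq:def:Y^{N,i}} for $Y^{N,i}$ and the target right-hand side appearing in the statement. A direct subtraction produces the clean decomposition
\[
R_t^{N,Y} = B_t(X^{N,i},Y^{N,i},\mu^{N,X},\mu^{N,Y}) + \Psi_t(X^{N,i},Y^{N,i},\mu^{N,X},\mu^{N,Y}) + M_t(\mu^{N,X},\mu^{N,Y}),
\]
in the notation of Lemmas \ref{lem:controlb}, \ref{lem:controlpsi}, \ref{lem:controlM}, with $\Psi_t\equiv 0$ in the case $\alpha>1$ since $\psi\equiv 0$ there. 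From here the proof is essentially a matter of invoking the correct form of each lemma and feeding in the pointwise bound on $X^{N,i}-Y^{N,i}$ supplied by Proposition \ref{prop:secondrepr}.

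For $1<\alpha<\ap<2$, I would control $\E[|B_t|^{\ap}\indiq_{\{t<T^N_K\}}]$ via the second form of \eqref{eq:b2}, which trades an $\ap$-moment of $X^{N,i}-Y^{N,i}$ for the weaker $L^1$ quantity (this form being available precisely because $b$ is bounded). The same $L^1$-integrand appears in the second inequality of \eqref{eq:controlI1} applied to $M_t$, once the empirical coupling together with the exchangeability of $(X^{N,i},Y^{N,i})_{1\le i\le N}$ is used to bound $\E[W_1(\mu^{N,X}_s,\mu^{N,Y}_s)\indiq_{\{s<T^N_K\}}]$ by $\E[|X^{N,1}_s-Y^{N,1}_s|\indiq_{\{s<T^N_K\}}]$. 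Plugging in $\E[|X^{N,1}_s-Y^{N,1}_s|\indiq_{\{s<T^N_K\}}]\le C_s r_s^{X,Y}$ from Proposition \ref{prop:secondrepr} i), combining via $|a+b|^{\ap}\le C(|a|^{\ap}+|b|^{\ap})$, and retaining the dominant $K$-prefactor coming from $M_t$, yields exactly the stated bound $C_t K^{\ap-\alpha}/(\ap-\alpha)\, r_t^{X,Y}$.

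For $0<\alpha<1$, the strategy is identical, with the $L^1$-norm replaced everywhere by the metric $d_{\am}$. I would apply \eqref{eq:b3} to $B_t$, \eqref{eq:controlpsi2} to $\Psi_t$, and \eqref{eq:controlI2} to $M_t$, using once more that joint exchangeability and the empirical coupling reduce $\E[W_{d_{\am}}(\mu^{N,X}_s,\mu^{N,Y}_s)\indiq_{\{s<T^N_K\}}]$ to $\E[d_{\am}(X^{N,1}_s,Y^{N,1}_s)\indiq_{\{s<T^N_K\}}]$. Proposition \ref{prop:secondrepr} ii) then bounds the latter by $C_s K^{1-\alpha}/(1-\alpha)\, r_s(N,\delta)$. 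The $B_t$ and $\Psi_t$ contributions inherit a single factor $K^{1-\alpha}/(1-\alpha)$ from Proposition \ref{prop:secondrepr} ii), while the $M_t$ contribution picks up a second such factor from \eqref{eq:controlI2} itself; this is the origin of the squared prefactor $(K^{1-\alpha}/(1-\alpha))^2$ in the statement, and the $M_t$ term dominates.

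Since every step is a direct application of a previously established estimate, the work is essentially bookkeeping rather than analysis. The only subtle point is matching $L^p$-levels across the chain of estimates: in the $\alpha>1$ regime only the $L^1$-control on $X^{N,i}-Y^{N,i}$ is available from Proposition \ref{prop:secondrepr} i), so one must select the second (non-convex) forms of \eqref{eq:b2} and \eqref{eq:controlI1}, which are precisely the versions designed to compensate this mismatch by trading on the boundedness of $b$ and the boundedness of the drift contribution in $M_t$.
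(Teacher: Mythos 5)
Your proposal is correct and follows essentially the same route as the paper: the same decomposition $R^{N,Y}_t=B_t+\Psi_t+M_t$ (with $\Psi_t\equiv 0$ for $\alpha>1$), the same lemmas combined with exchangeability, the empirical coupling and Proposition \ref{prop:secondrepr}, and the same explanation of the squared factor $\left(K^{1-\alpha}/(1-\alpha)\right)^2$ coming from the $M_t$ term. The only cosmetic difference is that, for $M_t$ in the case $\alpha>1$, the paper uses the first inequality of \eqref{eq:controlI1} together with Jensen's inequality and the boundedness and Lipschitz continuity of $f$ rather than the $W_1$ form, but the two routes yield the same integrand $\E[|X^{N,i}_s-Y^{N,i}_s|\indiq_{\{s<T^N_K\}}]$.
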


\begin{proof}
\sloppy Let $1<\alpha<2.$ Then $R_t^{N,Y}=B^N_t+M^N_t,$
with $M^N_t \coloneqq M_t(\mu^{N,X},\mu^{N,Y})$
and 
$B^N_t=B_t(X^{N,i},Y^{N,i}, \mu^{N,X},\mu^{N,Y}).$ 
Using first \eqref{eq:b2}  of Lemma \ref{lem:controlb}  and then Proposition \ref{prop:secondrepr}, we obtain
\begin{equation}\label{eq:bb}
\E[|B^N_t|^{\ap}\indiq_{\{t < T^N_K\}}]\leq C\int_0^t\E[|X^{N,i}_s-Y^{N,i}_s|\indiq_{\{s < T^N_K\}}]ds\leq C t r_t^{X,Y}.
\end{equation}

Moreover, using Lemma \ref{lem:controlM}, Jensen's inequality, the exchangeability of $(X^{N,i},Y^{N,i})_i$ and finally once more the boundedness and the Lipschitz continuity of $f,$
\begin{equation}\label{eq:M}
\begin{split}
\E\left[|M^N_t|^{\ap}\indiq_{\{t < T^N_K\}}\right] & \leq C \frac{K^{\ap-\alpha}}{\ap-\alpha} \int_0^t\E\left[\left|\mu^{N,X}_s(f)-\mu^{N,Y}_s(f)\right|^{\ap} \indiq_{\{s < T^N_K\}}\right]ds \\
& \leq C \frac{K^{\ap-\alpha}}{\ap-\alpha} \int_0^t\E[|f(X^{N,i}_s)-f(Y^{N,i}_{s})|^{\ap} \indiq_{\{s < T^N_K\}}]ds\\
& \leq C \frac{K^{\ap-\alpha}}{\ap-\alpha}  \int_0^t\E[|X^{N,i}_s-Y^{N,i}_{s}|\indiq_{\{s < T^N_K\}}]ds
 \leq C_t  \frac{K^{\ap-\alpha}}{\ap-\alpha}  r_t^{X,Y}.
\end{split}
\end{equation}
Finally, collecting \eqref{eq:bb} and \eqref{eq:M} we obtain the claim  $i).$ 

Let now $0<\am<\alpha<1$. Clearly $R_t^{N,Y}=B^N_t+ \Psi^N_t+M^N_t,$
with $M^N_t \coloneqq M_t(\mu^{N,X},\mu^{N,Y}),$
$\Psi^N_t \coloneqq \Psi_t(X^{N,i},Y^{N,i}, \mu^{N,X},\mu^{N,Y})
$ 
and
$B^N_t \coloneqq B_t(X^{N,i},Y^{N,i}, \mu^{N,X},\mu^{N,Y}).$
Now we have, by \eqref{eq:b3} of Lemma \ref{lem:controlb} and \eqref{eq:controlpsi2} of  Lemma \ref{lem:controlpsi} respectively, 
\[
\E[|B_t^N|\indiq_{\{t < T^N_K\}}] \leq C \int_0^t\E[d_{\am}(X_s^{N,i}, Y_s^{N,i}) \indiq_{\{s < T^N_K\}}] ds 
\]
and
\[
\E[|\Psi^N_t|\indiq_{\{ t < T^N_K\}}]\leq C\int_0^t\E[ d_{\am}(X_s^{N,i},Y_s^{N,i}) \indiq_{\{s < T^N_K\}}]ds , 
\]
so that, using Proposition \ref{prop:secondrepr} in both the last inequalities, we obtain
\[ \E[(|B_t^N|+ |\Psi^N_t|)\indiq_{\{t < T^N_K\}}]\leq C_t  \frac{K^{1-\alpha}}{1-\alpha} r_t(N, \delta) .  \]
Finally, \eqref{eq:controlI2} implies
\begin{multline*}
\E[| M^N_t|\indiq_{\{t < T^N_K\}}]\leq  C \frac{K^{1-\alpha}}{ 1- \alpha}  \int_0^t\E\left[W_{d_\am}(\mu_s,\tilde \mu_s)\indiq_{\{s < T^N_K\}}\right]ds
\leq C_t \left(\frac{K^{1-\alpha}}{1-\alpha} \right)^2 r_t(N, \delta) ,
\end{multline*}
concluding the proof.
\end{proof}

\subsection{Representation for the limit system}\label{subs:aX}
In this subsection we prove the following representation result. 
\begin{proposition}\label{prop:MFbarX}
Grant Assumptions \ref{ass:b}--\ref{ass:nu0+}. 
Let $\bar X^N$ denote the unique solution to the limit system \eqref{eq:limitsystem} driven by $S^{N,\alpha}$, and let $(\bar X^{N,i})_{i=1,\dots,N}$ be the first $N$ coordinates of this solution. Write $ \mu^{N, \bar X^N} $ for the empirical measure of these first $N$ coordinates. Then 

\begin{multline*}
\bar X^{N,i}_t =  X^{i}_0 +   \int_0^t  b(\bar X^{N,i}_s ,  \mu^{N,\bar X^N}_s )  ds + \int_{[0,t]\times\R_+ } \psi (\bar X^{N, i}_{s-}, \mu^{N,\bar X^N}_{s-}(f) )  \indiq_{ \{ z \le  f ( \bar X^{N, i}_{s-}) \}} \bar\pi^i (ds,dz)\\
+  
 \int_{[0,t] } \left( \mu^{N,\bar X^N}_{s-}(f)  \right)^{1/ \alpha}   d S_s^{ N,\alpha}+R_t^{N,\bar X}, 
\end{multline*} 
where
\begin{description}
\item{i)} if  $1<\alpha<\ap<2,$ 
\begin{equation*}\label{eq:rbarx}
\E\left[|R_t^{N,\bar X}|^{\ap}\indiq_{\{t < T^N_K\}}\right]\leq  C_t 
\frac{ K^{\ap-\alpha}}{\ap - \alpha} N^{-1/2};
\end{equation*}
\item{ii)}  if $0<\am<\alpha<1,$
\begin{equation*}\label{eq:rbarx}
\E\left[|R_t^{N,\bar X}|\indiq_{\{t < T^N_K\}}\right]\leq  C_t
\frac{ K^{1-\alpha}}{1 - \alpha} (  N^{ - 1/2} \indiq_{ \{1 > \am > \frac12\}} +
N^{- \am }\indiq_{\{ \am < \frac12\}}) .
\end{equation*}
\end{description}

\end{proposition}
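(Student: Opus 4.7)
Plan. The proof consists of three moves. First, by subtracting the limit SDE \eqref{eq:limitsystem} satisfied by $\bar X^{N,i}$ (with driving noise $S^{N,\alpha}$ and $\bar\pi^i$) from the target representation, the remainder $R_t^{N,\bar X}$ is identified as the sum of three terms coming from replacing $\bar\mu_s$ by the empirical measure $\mu^{N,\bar X^N}_s$ inside $b$, inside $\psi$ (absent when $\alpha > 1$), and inside the integrand $\bar\mu_{s-}(f)^{1/\alpha}$ of the stable integral:
\begin{multline*}
R_t^{N,\bar X} = \int_0^t \bigl[b(\bar X^{N,i}_s, \bar \mu_s) - b(\bar X^{N,i}_s, \mu^{N,\bar X^N}_s)\bigr]\, ds \\
+ \int_{[0,t]\times \R_+} \bigl[\psi(\bar X^{N,i}_{s-}, \bar \mu_{s-}) - \psi(\bar X^{N,i}_{s-}, \mu^{N,\bar X^N}_{s-})\bigr] \indiq_{\{z \le f(\bar X^{N,i}_{s-})\}} \bar\pi^i(ds,dz)\\
+ \int_0^t \bigl[\bar\mu_{s-}(f)^{1/\alpha} - \mu^{N,\bar X^N}_{s-}(f)^{1/\alpha}\bigr]\, dS^{N,\alpha}_s.
\end{multline*}

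Second, I would bound each of these pieces by applying Lemmas \ref{lem:controlb}, \ref{lem:controlpsi} and \ref{lem:controlM} with $X = \tilde X = \bar X^{N,i}$. Since the diagonal terms $|X-\tilde X|$ and $d_\am(X,\tilde X)$ vanish, the estimates collapse to multiples of $\int_0^t \E[W_1(\bar\mu_s, \mu^{N,\bar X^N}_s)\,\indiq_{\{s<T^N_K\}}]\,ds$ in the regime $1<\alpha<2$ (using the second version of the bound in Lemma \ref{lem:controlb} i), which keeps $W_1$ linear and thereby preserves the $N^{-1/2}$ rate of the Fournier-Guillin step below), and of $\int_0^t \E[W_{d_\am}(\bar\mu_s, \mu^{N,\bar X^N}_s)\,\indiq_{\{s<T^N_K\}}]\,ds$ in the regime $0<\alpha<1$. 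The $K$-dependent prefactors $K^{\ap-\alpha}/(\ap-\alpha)$ and $K^{1-\alpha}/(1-\alpha)$ in the final statements come only from Lemma \ref{lem:controlM}, the contributions of the drift and jump terms being $K$-free and hence dominated.

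Third, I would close the argument via a conditional law-of-large-numbers estimate. Given $S^{N,\alpha}$, the coordinates $(\bar X^{N,i})_{1 \le i \le N}$ all solve the same limit SDE driven by the common $S^{N,\alpha}$ but by mutually independent inputs $(X^i_0, \bar\pi^i)$, and are therefore conditionally i.i.d.\ with common law $\bar\mu_s$. Applying the one-dimensional Fournier-Guillin estimates of \cite{fournierguillin} to this conditional i.i.d.\ family (together with the trivial inequality $W_{d_\am}\leq W_\am$ in the $\alpha<1$ case) yields, conditionally on $S^{N,\alpha}$, a bound of the form $C\,[\bar\mu_s(|x|^p)]^{1/p}\,N^{-1/2}$ for $W_1$ with $p>2$, and a bound of the form $C\,[\bar\mu_s(|x|^p)]^{1/p}\,\bigl(N^{-1/2}\indiq_{\{\am > 1/2\}} + N^{-\am}\indiq_{\{\am < 1/2\}}\bigr)$ for $W_{d_\am}$ with $p$ chosen appropriately.

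The main technical obstacle lies in integrating the random prefactor $[\bar\mu_s(|x|^p)]^{1/p}$ on the event $\{s < T^N_K\}$. By Remark \ref{rem:33} this conditional moment is almost surely finite, but the bound involves $\sup_{v \le s}\bigl|\int_{[0,v]} \bar\mu_{u-}(f)^{1/\alpha}\, dS^{N,\alpha}_u\bigr|$, whose unconditional $p$-th moments blow up for $p>\alpha$. Restricting to $\{s < T^N_K\}$ is precisely what makes the argument work: on this event the driving stable process has no jumps of size larger than $K$, so the stable integral coincides with its $K$-truncated small-jump part (plus a deterministic drift when $\alpha>1$) and hence admits moments of arbitrary order, with a constant depending only on $K$, $s$ and $\|f\|_\infty$ that can be absorbed into $C_t$. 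Plugging this uniform moment control into the Fournier-Guillin bounds and integrating in $s$ produces the two rates announced in i) and ii).
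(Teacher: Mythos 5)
Your decomposition of $R_t^{N,\bar X}$, the application of Lemmas \ref{lem:controlb}, \ref{lem:controlpsi} and \ref{lem:controlM} on the diagonal $X=\tilde X=\bar X^{N,i}$, and the conditional i.i.d.\ plus Fournier--Guillin step are exactly the paper's proof (the last step is packaged there as Lemma \ref{lem:fournier_guillin}). The one place where you deviate is the treatment of the random prefactor $[\bar\mu_s(|x|^p)]^{1/p}$. The obstacle you describe is not actually present in the paper's argument: Lemma \ref{lem:fournier_guillin} is proved \emph{unconditionally}, without restricting to $\{s<T^N_K\}$, because the exponents in Fournier--Guillin are chosen so that, after applying sub-additivity, the stable integral only ever appears raised to a power $\am<\alpha$ (cases $\am<\tfrac12$ and $\tfrac12<\am<1$) or to the power $1<\alpha$ (case $\alpha>1$), and $\E[\sup_{v\le s}|S^{N,\alpha}_v|^{r}]<\infty$ for $r<\alpha$. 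Your workaround --- truncating to $\{s<T^N_K\}$ so that the stable integral has all moments --- is sound, but it injects an extra $K$-dependence into the constant multiplying $N^{-1/2}$ (resp.\ $N^{-\am}$), so it proves the estimate with $C_{t,K}$ in place of $C_t$ and does not isolate the $K$-dependence in the explicit factor $K^{\ap-\alpha}/(\ap-\alpha)$ (resp.\ $K^{1-\alpha}/(1-\alpha)$) as the proposition states. Since $K$ is sent to infinity only after $N$ in the final argument, one would then have to track this extra dependence through Theorem \ref{thm:prop:chaos}; the paper's choice of exponents avoids this entirely and is the cleaner route.
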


To prove the above result we will need the following lemma.
\begin{lemma}\label{lem:fournier_guillin}
Supposing that $ \E ( |X^i_0|^{ (2 \alpha ) \vee 1} ) < \infty $ if $ \alpha < 1 $ and $ \E ( | X^i_0|^p ) < \infty $ for some $p > 2 $ in case $ \alpha > 1, $ 
we have for any $t\geq 0$ and $ \am < \alpha $ 
\begin{equation*}
\E ( W_\am ( \mu^{N,\bar X^N}_t, \bar \mu_t ) ) \indiq_{ \alpha < 1 } + \E ( W_1(\mu^{N,\bar X^N}_t, \bar \mu_t)) \indiq_{ \alpha > 1 }  \le C_t 
 \left\{ 
\begin{array}{ll}
 N^{ - 1/2} & \alpha > 1 \\
 N^{ - 1/2}, & 1 > \am > \frac12 \\
 N^{- \am }, & \am < \frac12 
\end{array} 
\right\} ,
\end{equation*}
where $C_{ t } $ is a positive constant which is non-decreasing in $t .$ 
\end{lemma}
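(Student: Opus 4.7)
The plan is to apply the Fournier--Guillin empirical approximation estimates of \cite{fournierguillin} conditionally on the common noise $S^{N,\alpha}$, and then take the outer expectation carefully. First I will use that, conditional on the $\sigma$-algebra $\mathcal G := \sigma(S^{N,\alpha})$, the variables $(\bar X^{N,i}_t)_{1\le i\le N}$ are i.i.d.\ with common conditional law $\bar\mu_t$. This follows from the strong uniqueness of \eqref{eq:limitsystem} (Theorem~\ref{theo:strongexistence}) combined with the fact that the inputs $(X_0^i,\bar\pi^i)_{i\ge 1}$ are i.i.d.\ and jointly independent of $S^{N,\alpha}$: for each $i$ the solution $\bar X^{N,i}$ is the same deterministic measurable functional applied to $(X_0^i,\bar\pi^i,S^{N,\alpha})$. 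Consequently, $\mu^{N,\bar X^N}_t$ is the empirical measure of $N$ conditionally i.i.d.\ samples from $\bar\mu_t$, so the Fournier--Guillin bounds apply conditionally on $\mathcal G$.

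Next, by Theorem~1 of \cite{fournierguillin} applied in dimension $d=1$, with $p=1$ if $\alpha>1$ and $p=\am$ if $\alpha<1$, and choosing an auxiliary exponent $q>p$ such that $\bar\mu_t$ has a finite (conditional) $q$-th moment, I obtain
\[
\E\!\left[W_p(\mu^{N,\bar X^N}_t,\bar\mu_t)\,\middle|\,\mathcal G\right] \le C_{p,q}\, M_q(\bar\mu_t)^{p/q}\, r_N,
\]
where $M_q(\bar\mu_t)=\E[|\bar X^{N,1}_t|^q\mid\mathcal G]$ and $r_N$ equals $N^{-1/2}$ in two cases (either $\alpha>1$, taking any $q>2$, made possible by the hypothesis $X_0\in L^{p_0}$ for some $p_0>2$; or $\alpha<1$ with $\am>1/2$, taking any $q\in(2\am,2\alpha)$, made possible by $X_0\in L^{2\alpha}$), and equals $N^{-\am}$ in the third case $\alpha<1$, $\am<1/2$, where $q=1$ already satisfies $q>\am/(1-\am)$.

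The final step is to check that $\E[M_q(\bar\mu_t)^{p/q}]<\infty$. Starting from the a priori bound \eqref{eq:mufin} and applying the conditional Minkowski inequality, using that $X_0$ and the Poisson random measure $\bar\pi$ are independent of $\mathcal G$ while the stable integral is $\mathcal G$-measurable, I obtain
\[
M_q(\bar\mu_t)^{1/q}\le (\E|X_0|^q)^{1/q}+\|b\|_\infty t+\|\psi\|_\infty (\E[\Pi_t^q])^{1/q}+\sup_{s\le t}\left|\int_0^s\bar\mu_{v-}(f)^{1/\alpha}\,dS^{N,\alpha}_v\right|,
\]
where $\Pi_t$ is a Poisson variable of parameter $\|f\|_\infty t$. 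Raising to the power $p/q\le 1$, using sub-additivity of $x\mapsto x^{p/q}$, and taking expectation, the only non-trivial term to bound is $\E\bigl[\sup_{s\le t}\bigl|\int_0^s\bar\mu_{v-}(f)^{1/\alpha}\,dS^{N,\alpha}_v\bigr|^{p/q}\bigr]$. Since $\bar\mu_s(f)\in[\underline f,\|f\|_\infty]$ and $p/q<p<\alpha$, classical moment estimates for stable integrals apply (a direct pathwise sub-additivity argument over jumps when $\alpha<1$, and a BDG-plus-drift argument when $\alpha>1$), yielding finiteness. The resulting constant is non-decreasing in $t$, which provides the claimed $C_t$.

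The main obstacle is precisely this passage from the conditional to the unconditional bound. Unconditionally, $\bar X^{N,1}_t$ only enjoys moments strictly below $\alpha<2$, whereas Fournier--Guillin requires a conditional moment of order $q>p$ that may exceed $\alpha$ ($q>2$ when $\alpha\in(1,2)$, or $q>2\am$ when $\alpha<1$ and $\am>1/2$). Working conditionally on $S^{N,\alpha}$ is the key device, since all conditional moments become a.s.\ finite; at the outer expectation only the \emph{smaller} power $p/q<\alpha$ of the stable integral appears, which matches precisely the regularity afforded by the stable driver and the assumed moments on $X_0$.
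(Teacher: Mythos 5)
Your proposal is correct and follows essentially the same route as the paper: condition on $S^{N,\alpha}$ so that the coordinates become conditionally i.i.d.\ with law $\bar\mu_t$, apply Theorem 1 of \cite{fournierguillin} conditionally with the appropriate pair $(p,q)$ in each of the three regimes, and then integrate out using the a priori bound on $\bar\mu_t$'s moments together with sub-additivity, the key point being that only a power $p/q<\alpha$ of the stable integral survives. The only (immaterial) differences are your choices $q\in(2\am,2\alpha)$ and $q>2$ arbitrary where the paper takes $q=2\alpha$ and $q=p$ respectively.
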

The proof of this lemma is given in  the Appendix section \ref{app:proof_fg}. 

\begin{proof}[Proof of Prop. \ref{prop:MFbarX}]
Here, with $\Psi=0$ in the case $1<\alpha<2,$
 \[ R_t^{N,\bar X}=B_t(\bar X^{N,i},\bar X^{N,i}, \mu^{N,\bar X^N},\bar \mu)+\Psi_t(\bar X^{N,i},\bar X^{N,i}, \mu^{N,\bar X^N},\bar \mu)+M_t( \mu^{N,\bar X^N},\bar \mu).\]
We start with the proof of the case $1<\alpha<2.$ 
Using Lemma \ref{lem:controlb},
\begin{equation*}
\E[|B_t(\bar X^{N,i},\bar X^{N,i}, \mu^{N,\bar X^N},\bar \mu)|^{\ap}\indiq_{\{s < T^N_K\}}]\leq 
\int_0^t\E\left[W_1(\mu^{N,\bar X^N}_s ,\bar \mu_s)\right]ds\leq CtN^{-1/2}.
\end{equation*}

Using Lemma \ref{lem:controlM} and Lemma \ref{lem:fournier_guillin}, the result of $i)$ follows from
\begin{equation*}\label{eq:Mgron}
\E\left[|M_t( \mu^{N,\bar X^N},\bar \mu)|^{\ap}\indiq_{\{t < T^N_K\}}\right] 
\leq C \frac{K^{\ap-\alpha}}{\ap-\alpha}\int_0^t\E\left[W_1(\mu^{N,\bar X^N}_s,\bar \mu_s)\right]ds\leq  C_t \frac{K^{\ap-\alpha}}{\ap-\alpha} N^{-1/2}.
\end{equation*}
We now deal with the case $0<\alpha<1.$ From Lemma \ref{lem:controlb} and Lemma \ref{lem:fournier_guillin}, 
\begin{multline*}
\E[|B_t(\bar X^{N,i},\bar X^{N,i}, \mu^{N,\bar X^N},\bar \mu)|\indiq_{\{s < T^N_K\}}]\\
\leq  C
\int_0^t\E\left[W_{d_\am}(\mu^{N,\bar X^N}_s,\bar \mu_s)\right]ds \le  C
\int_0^t\E\left[W_{\am}(\mu^{N,\bar X^N}_s,\bar \mu_s)\right]ds
\\
\leq C t(  N^{ - 1/2} \indiq_{ \{1 > \am > \frac12\}} +
N^{- \am }\indiq_{\{ \am < \frac12\}}) .
\end{multline*}
The same bound is true for $\Psi_t(\bar X^{N,i},\bar X^{N,i}, \mu^{N,\bar X^N},\bar \mu),$ by Lemma \ref{lem:controlpsi}.
Moreover, from Lemma \ref{lem:controlM} and Lemma \ref{lem:fournier_guillin}, 
\begin{multline*}
\E\left[|M_t( \mu^{N,\bar X^N},\bar \mu)\indiq_{\{t < T^N_K\}}\right]\leq  C \frac{K^{1-\alpha}}{1-\alpha} \int_0^t\E[W_{d_\am}(\mu_s^{N,\bar X^N},\bar \mu_s)]ds \\
\leq \frac{K^{1-\alpha}}{1-\alpha} C t(  N^{ - 1/2} \indiq_{ \{1 > \am > \frac12\}} +
N^{- \am }\indiq_{\{ \am < \frac12\}}),
\end{multline*}
concluding the proof.

\end{proof}

\subsection{Bounding the distance between $Y^{N,i}_t$ and $\bar X^{N,i}_t$}\label{subs:Y_X_lim}
\begin{proposition}\label{prop:dist_Y_barX}
Grant Assumptions \ref{ass:b}--\ref{ass:psi},

Then, for all $N\in \N^\ast$ and $i=1,\dots,N$, 
\begin{description}
\item{i)} For $1<\alpha<2$, for all $\alpha < \ap < 2$ and for all $K > 0$,
\begin{equation*}
\E\left[|Y_t^{N,i}-\bar X_t^{N,i}|^{\ap}\indiq_{\{t < T^N_K\}}\right]\leq   (r_t^{X,Y}+
N^{-1/2})e^{Ct \frac{K^{\ap-\alpha}}{\ap-\alpha} } . 
\end{equation*}
\item{ii)} For $0<\alpha<1$, for all $0<\am<\alpha$ and all $K > 0$, 
\begin{equation*}
\E\left[|Y_t^{N,i}-\bar X_t^{N,i}|\indiq_{\{t < T^N_K\}}\right]\leq
 (r_t(N, \delta) +N^{ - 1/2} \indiq_{ \{\alpha > \am > \frac12\}} +
N^{- \am }\indiq_{\{ \am < \frac12\}})e^{C t \frac{K^{1-\alpha}}{1 - \alpha}}.
\end{equation*}
\end{description}
\end{proposition}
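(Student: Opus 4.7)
The strategy is to write $Y^{N,i}_t - \bar X^{N,i}_t$ as a sum of error terms that can be controlled using Lemmas \ref{lem:controlb}, \ref{lem:controlpsi}, \ref{lem:controlM} (for the drift, jump, and stable integral contributions) and Propositions \ref{prop:YtoY} and \ref{prop:MFbarX} (for the remainder terms $R^{N,Y}$ and $R^{N,\bar X}$), and then close the estimate by Gronwall's lemma. The crucial observation is that, since the construction of both $(Y^{N,i})_{1\le i\le N}$ and $(\bar X^{N,i})_{1\le i\le N}$ is symmetric in $i$ (they are driven by the exchangeable family $(X^i_0,\bar\pi^i, S^{N,\alpha})$ and depend on permutation-invariant empirical measures), the pair system $(Y^{N,i},\bar X^{N,i})_{1\le i\le N}$ is exchangeable, so that $\frac1N \sum_{i=1}^N \delta_{(Y^{N,i}_s,\bar X^{N,i}_s)}$ is a coupling of $\mu^{N,Y}_s$ with $\mu^{N,\bar X^N}_s$, allowing us to apply the sharper bounds \eqref{eq:b2}--\eqref{eq:b3} and \eqref{eq:controlpsi2}.

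First I would subtract the representation of $Y^{N,i}_t$ given by Proposition \ref{prop:YtoY} from the one of $\bar X^{N,i}_t$ given by Proposition \ref{prop:MFbarX}, obtaining
\[
Y^{N,i}_t - \bar X^{N,i}_t = B^N_t + \Psi^N_t + M^N_t + R^{N,Y}_t - R^{N,\bar X}_t,
\]
with $B^N_t \coloneqq B_t(Y^{N,i},\bar X^{N,i},\mu^{N,Y},\mu^{N,\bar X^N})$, $\Psi^N_t \coloneqq \Psi_t(Y^{N,i},\bar X^{N,i},\mu^{N,Y},\mu^{N,\bar X^N})$ (equal to $0$ if $\alpha>1$), and $M^N_t \coloneqq M_t(\mu^{N,Y},\mu^{N,\bar X^N})$.

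\textbf{Case $1<\alpha<2$.} Raising to the power $\ap$ and using $|\sum_{k=1}^4 a_k|^{\ap}\le 4^{\ap-1}\sum |a_k|^{\ap}$, I would then apply \eqref{eq:b2} with exponent $\ap$ together with the exchangeability coupling to get
\[
\E[|B^N_t|^{\ap}\indiq_{\{t<T^N_K\}}] \le C\int_0^t \E[|Y^{N,i}_s-\bar X^{N,i}_s|^{\ap}\indiq_{\{s<T^N_K\}}]\,ds,
\]
and similarly from \eqref{eq:controlI1} together with the Lipschitz/boundedness of $f$, Jensen's inequality and exchangeability,
\[
\E[|M^N_t|^{\ap}\indiq_{\{t<T^N_K\}}] \le C \frac{K^{\ap-\alpha}}{\ap-\alpha}\int_0^t \E[|Y^{N,i}_s-\bar X^{N,i}_s|^{\ap}\indiq_{\{s<T^N_K\}}]\,ds.
\]
Combining with $\E[|R^{N,Y}_t|^{\ap}\indiq_{\{t<T^N_K\}}]\le C_t \frac{K^{\ap-\alpha}}{\ap-\alpha} r^{X,Y}_t$ from Proposition \ref{prop:YtoY} and $\E[|R^{N,\bar X}_t|^{\ap}\indiq_{\{t<T^N_K\}}]\le C_t \frac{K^{\ap-\alpha}}{\ap-\alpha} N^{-1/2}$ from Proposition \ref{prop:MFbarX}, I obtain a closed Gronwall-type inequality
\[
\E[|Y^{N,i}_t-\bar X^{N,i}_t|^{\ap}\indiq_{\{t<T^N_K\}}] \le C_t \frac{K^{\ap-\alpha}}{\ap-\alpha}(r^{X,Y}_t + N^{-1/2}) + C\frac{K^{\ap-\alpha}}{\ap-\alpha}\int_0^t \E[|Y^{N,i}_s-\bar X^{N,i}_s|^{\ap}\indiq_{\{s<T^N_K\}}]\,ds,
\]
and Gronwall's lemma yields the stated bound, after absorbing the prefactor into the exponential.

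\textbf{Case $0<\alpha<1$.} The argument runs in parallel using the distance $d_{\am}$. I would apply \eqref{eq:b3} of Lemma \ref{lem:controlb}, \eqref{eq:controlpsi2} of Lemma \ref{lem:controlpsi} (to handle the now non-trivial jump term $\Psi^N_t$), and \eqref{eq:controlI2} of Lemma \ref{lem:controlM}, each time relying on the exchangeability of $(Y^{N,i},\bar X^{N,i})_{1\le i\le N}$ to replace Wasserstein distances between empirical measures by $\E[d_{\am}(Y^{N,i}_s,\bar X^{N,i}_s)]$. The remainder estimates from Propositions \ref{prop:YtoY} and \ref{prop:MFbarX} provide the inhomogeneous terms $C_t (K^{1-\alpha}/(1-\alpha))^2 r_t(N,\delta)$ and $C_t (K^{1-\alpha}/(1-\alpha))(N^{-1/2}\indiq_{\am>1/2}+N^{-\am}\indiq_{\am<1/2})$ respectively; Gronwall's lemma closes the loop, the largest constant in the exponent coming from $M^N_t$ and being of order $K^{1-\alpha}/(1-\alpha)$.

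The main obstacle is essentially bookkeeping: verifying that $(Y^{N,i},\bar X^{N,i})_{1\le i\le N}$ is truly exchangeable so that the stronger inequalities \eqref{eq:b2}, \eqref{eq:b3}, \eqref{eq:controlpsi2} apply (instead of the weaker Wasserstein bounds which would only yield a bound via $N^{-1/2}$ losses on each step); and ensuring that all error terms from Propositions \ref{prop:YtoY}, \ref{prop:MFbarX}, already absorb the dependence on $K$ in a compatible way, so that the final Gronwall constant matches the stated exponential $e^{Ct K^{\ap-\alpha}/(\ap-\alpha)}$ (resp.\ $e^{Ct K^{1-\alpha}/(1-\alpha)}$).
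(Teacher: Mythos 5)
Your proposal is correct and follows essentially the same route as the paper's proof: the same decomposition of $Y^{N,i}_t-\bar X^{N,i}_t$ into $B$, $\Psi$, $M$ and the two remainders from Propositions \ref{prop:YtoY} and \ref{prop:MFbarX}, the same use of \eqref{eq:b2}/\eqref{eq:b3}, \eqref{eq:controlpsi2} and Lemma \ref{lem:controlM} via the exchangeability of the coupled pair system, and the same Gronwall closure with the $K$-dependent prefactor absorbed into the exponential.
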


\begin{proof}
Using the representations of $\bar X$ and of $Y$ given by Propositions \ref{prop:MFbarX} and \ref{prop:YtoY},
\begin{multline*}
Y_t^{N,i}-\bar X_t^{N,i}=B(Y_t^{N,i},\bar X_t^{N,i},\mu^{N,Y},\mu^{N,\bar X^N})+\Psi(Y_t^{N,i},\bar X_t^{N,i},\mu^{N,Y},\mu^{N,\bar X^N})\\
+M_t(\mu^{N,Y},\mu^{N,\bar X^N})+R_t^{N,\bar X}+R_t^{N,Y}.
\end{multline*}
We start with the proof of the case $1<\alpha<2.$ In this case
 \[\E\left[|R_t^{N,\bar X}|^{\ap}\indiq_{\{t < T^N_K\}}\right]\leq C_t \frac{ K^{\ap-\alpha}}{\ap - \alpha} N^{-1/2 } \]
 and
 \[ \E[|R_t^{N,Y}|^{\ap}\indiq_{\{t < T^N_K\}}]\leq C_t \frac{K^{\ap-\alpha}}{\ap-\alpha} r_t^{X,Y}  .\]
Using Lemma \ref{lem:controlb}, 
\begin{equation*}
 \E\left[|B(Y_t^{N,i},\bar X_t^{N,i},\mu^{N,Y},\mu^{N,\bar X^N})|^{\ap}\indiq_{\{t < T^N_K\}}\right] 
\leq C\int_0^t\E[|Y_s^{N,i}-\bar X_s^{N,i}|^{\ap}\indiq_{\{s < T^N_K\}}]ds .
\end{equation*}

Furthermore, using the first inequality in \eqref{eq:controlI1} in Lemma \ref{lem:controlM}, Jensen's inequality and exchangeability, 
 \[
 \E\left[|M_t(\mu^{N,Y},\mu^{N,\bar X^N})|^{\ap}\indiq_{\{t < T^N_K\}}\right]\leq C \frac{K^{\ap-\alpha}}{\ap-\alpha}\int_0^t\E\left[|Y_s^{N,i}-\bar X_s^{N,i}|^{\ap}\indiq_{\{s < T^N_K\}}\right] ds.
 \]
We conclude, using Gronwall's lemma, that
\begin{multline*}
\E\left[|Y_t^{N,i}-\bar X_t^{N,i}|^{\ap}\indiq_{\{t < T^N_K\}}\right]\leq C_t  \frac{K^{\ap-\alpha}}{\ap-\alpha} (r_t^{X,Y}+
N^{-1/2})e^{ \frac{K^{\ap-\alpha}}{\ap-\alpha} t}  \\
 \le C  (r_t^{X,Y}+
N^{-1/2})e^{Ct \frac{K^{\ap-\alpha}}{\ap-\alpha} } ,
\end{multline*}
where in the last inequality we have used that $ x e^x \le e^{C x } $ for some $ C> 0, $ for all $ x \geq 0.$

We now deal with the case $0<\alpha<1.$ Then
\[\E\left[|R_t^{N,\bar X}|\indiq_{\{t < T^N_K\}}\right]\leq C_t \frac{ K^{1-\alpha}}{1 - \alpha} (  N^{ - 1/2} \indiq_{ \{\alpha > \am > \frac12\}} +
N^{- \am}\indiq_{\{ \am < \frac12\}})
\]
and
\[\E[|R_t^{N,Y}|\indiq_{\{t < T^N_K\}}]\leq  C_t   \left( \frac{K^{1-\alpha}}{1-\alpha} \right)^2 r_t(N, \delta) .\]
By Lemma \ref{lem:controlb}, 
\[
 \E\left[|B(Y_t^{N,i},\bar X_t^{N,i},\mu^{N,Y},\mu^{N,\bar X^N})|\indiq_{\{t < T^N_K\}}\right]\leq C\int_0^t\E[|Y_s^{N,i}-\bar X_s^{N,i}|\indiq_{\{s < T^N_K\}}]ds . 
\]
The same bound is true for $\Psi$, from Lemma \ref{lem:controlpsi}. Using Lemma \ref{lem:controlM} and the boundedness of $f$, 
 \[
 \E\left[|M_t(\mu^{N,Y},\mu^{N,\bar X^N})|\indiq_{\{t < T^N_K\}}\right]\leq C\frac{ K^{1-\alpha}}{1 - \alpha} \int_0^t\E\left[|Y_s^{N,i}-\bar X_s^{N,i}|\indiq_{\{s < T^N_K\}}\right] .
 \]
We conclude similarly as above, using Gronwall's lemma, that in the case $0<\alpha<1,$
\begin{multline*}
\E\left[|Y_t^{N,i}-\bar X_t^{N,i}|\indiq_{\{t < T^N_K\}}\right]\leq  C_t (r_t(N, \delta) +N^{ - 1/2} \indiq_{ \{\alpha > \am > \frac12\}} +
N^{- \am }\indiq_{\{ \am < \frac12\}})e^{C t \frac{K^{1-\alpha}}{1 - \alpha}}.
\end{multline*}
\end{proof}

\subsection{Proof of Theorem \ref{thm:prop:chaos}}\label{subs:proof:prop:chaos:conclude}
\begin{proof}
We start proving item 2. In case $ \alpha > 1 , $ we have, 
 using Propositions \ref{prop:secondrepr} and \ref{prop:dist_Y_barX} and supposing $ r_t^{X, Y } \le 1 $ (which is true for $ N$ sufficiently large), 
\begin{multline*}
\E[|X^{N,i}_t - \bar X^{N,i}_t|\indiq_{\{t<T^N_K\}}]  
\leq 
  \E[|X^{N,i}_t - Y^{N,i}_t| \indiq_{\{t < T^N_K\}}] +  \left (\E[|Y^{N,i}_t - \bar X^{N,i}_t|^{\ap}\indiq_{\{t<T^N_K\}}]\right)^{1/\ap}\\ \leq C_t r_t^{X,Y}+ 
\left(  (r_t^{X,Y}+
N^{-1/2})e^{Ct \frac{K^{\ap-\alpha}}{\ap-\alpha} }\right)^{1/\ap} \\
\le (r_t^{X,Y}+ N^{-1/2})^{1/\ap}e^{Ct \frac{K^{\ap-\alpha}}{\ap-\alpha} } . 
\end{multline*} 
Let now $0<\alpha<1.$ In this case, using Propositions \ref{prop:secondrepr} and \ref{prop:dist_Y_barX},
\begin{multline*}
\E[d_{\am}(X^{N,i}_t, \bar X^{N,i}_t) \indiq_{\{t<T^N_K\}}]  
\leq 
  \E[d_{\am}(X^{N,i}_t, Y^{N,i}_t)\indiq_{\{t<T^N_K\}} ] +  \E[|Y^{N,i}_t - \bar X^{N,i}_t|\indiq_{\{t<T^N_K\}}]\\
  \leq C_t  \frac{K^{1-\alpha}}{1-\alpha} r_t(N, \delta)  + (r_t(N, \delta)  +N^{ - 1/2} \indiq_{ \{1 > \am > \frac12\}} +
N^{- \am }\indiq_{\{ \am < \frac12\}})e^{C t \frac{K^{1-\alpha}}{1 - \alpha} }\\
\le C \left(  r_t(N, \delta)  +N^{ - 1/2} \indiq_{ \{1 > \am > \frac12\}} +
N^{- \am }\indiq_{\{ \am < \frac12\}}\right) e^{C t \frac{K^{1-\alpha}}{1 - \alpha} } ,
\end{multline*} 
which finishes the proof of item 2. 

We now turn to the proof of item 1. and start discussing the case $ \alpha > 1.$ 
We write
\begin{equation}
  \E ( | X_t^{N, i } - \bar X_t^{N,i} |) =  \E ( | X_t^{N, i } - \bar X_t^{N,i} | \indiq_{\{t \geq T^N_K\}}) +  \E ( | X_t^{N, i } - \bar X_t^{N,i} |\indiq_{\{t<T^N_K\}} ) .
\end{equation}  
We have already achieved a control of the second term of the right hand side of the above inequality (item 2.). Concerning the first term, comparing the equations defining $ X^{N, i }_t $ and $ \bar X^{N i }_t, $ recalling the 
notation of equations \eqref{eq:atn} and \eqref{eq:def:E^Ni_t} and using similar arguments as those leading to \eqref{eq:mufin}, we have
$$ | \bar X_t^{N,i} - X_t^{N, i } |  \le  2  \| b\|_\infty t +  \ \left| \int_{[0, t ] } \bar \mu_{v-} (f)^{ 1/ \alpha} d S_v^{N, \alpha } \right| + |A_t^N | +  |  E_t^{N, i } | ,$$
where both $  \left| \int_{[0, t ] } \bar \mu_{v-} (f)^{ 1/ \alpha} d S_v^{N, \alpha } \right|  $ and $ E_t^{N, i } $  possess a finite moment of order $p$  for any $ 0 < p < \alpha ,$ uniformly in $N.$ 
Moreover, by Theorem \ref{prop:representation_finite_syst}, 
$$ | A_t^N| \le \left| \int_{[0, t]} \left( \mu^N_{\tau_s} (f) \right)^{1/\alpha } d S_s^{N, \alpha } \right| + | R_t^N| =: | J_t^N | + |R_t^N|, $$
where $J_t^N $ possesses again a finite moment of order $p$  for any $ 0 < p < \alpha ,$ uniformly in $N.$ 

So we may write
\begin{multline*}
 | X_t^{N, i } - \bar X_t^{N,i} | \indiq_{\{ T_K^N \le t  \} } \\
 \le \left( 2  \|b\|_\infty t +  \left| \int_{[0, t ] } \bar \mu_{v-} (f)^{ 1/ \alpha} d S_v^{N, \alpha } \right|+ | J_t^N | + |E_t^{N, i } |  \right) \indiq_{\{ T_K^N \le t  \} } +  | R_t^N| \\
 =: {\mathcal T}_t^{N, i } \indiq_{\{ T_K^N \le t  \} } +  | R_t^N| , 
\end{multline*} 
where $ {\mathcal T}_t^{N, i } ,$ the sum of the four terms appearing above,  has finite moments of order $p$ for any $ 0 < p < \alpha, $ uniformly in $N.$ 
Moreover, we know that $ \E |R_t^N | \le C_t r_t (N, \delta), $ thanks to Theorem \ref{prop:representation_finite_syst}. 
Fix now some $ 1 < p < \alpha $ and use H\"older's inequality with conjugate coefficients $p$ and $q$ to obtain that
$$
 \E  \left[  {\mathcal T}_t^{N, i } \indiq_{\{ T_K^N \le t  \} }\right]   
 \le C_{p } \| {\mathcal T}_t^{N, i } \|_p      \mathbf{P} ( T_K^N \le t )^{ 1/q} =: C_p ( t) \mathbf{P} ( T_K^N \le t )^{ 1/q},
$$
where  $  C_p ( t)  $ is the uniform in $N$ bound on the $L_p-$norm of ${\mathcal T}_t^{N, i }.$ Since $ T_K^N $ is exponentially distributed with parameter $ \nu^\alpha ( B_K^c )$ (recall \eqref{eq:St}) and since $ \nu^\alpha ( B_K^c ) \le C K^{ - \alpha} , $ we
have that  
$$ \mathbf{P} ( T_K^N \le t ) = 1 - e^{ - C t K^{- \alpha } }  \to 0 $$ as $ K \to \infty.$ To summarize, we have
$$
 \E ( | X_t^{N, i } - \bar X_t^{N,i} |) \le C_p ( t) \mathbf{P} ( T_K^N \le t )^{ 1/q} + C_t r_t ( N, \delta) 
 + \E ( | X_t^{N, i } - \bar X_t^{N,i} |\indiq_{\{t<T^N_K\}} ) .
$$ 
Now, to prove that $ \E ( | X_t^{N, i } - \bar X_t^{N,i} |)  $ converges to $0, $ as $ N \to \infty, $ we proceed as follows. Fix any $ \varepsilon > 0.$ 
Then we may choose $K$ such that  
$$C_p ( t) \mathbf{P} ( T_K^N \le t )^{ 1/q} \le \varepsilon / 3.$$  
Once $K$ is fixed with this control, we then choose $ N_0 = N_0 (K ) $ sufficiently big such that the right hand side of \eqref{eq:finrate:alpha>1} is bounded by $ \varepsilon / 3 $ for all $ N \geq N_0 (K)$ and such that moreover $ C_t r_t (N, \delta (N)), $ our bound on $\E |R_t^N | , $ is also bounded by $ \varepsilon/3 $ for all $ N \geq N_0.$  Therefore,  $\E ( | X_t^{N, i } - \bar X_t^{N,i} |)  \le \varepsilon $ for all $ N \geq N_0 .$ Since $ W_1  ( {\mathcal L}( X_t^{N, i } ) , {\mathcal L} ( \bar X_t^i ) )  \le \E ( | X_t^{N, i } - \bar X_t^{N,i} |) , $ this implies item 1. in case $ \alpha > 1.$  

We now discuss the case $\alpha <1.$ Since $ \psi $ and $f$ are bounded, using the same arguments and the same notation as in the case $ \alpha > 1,$ we now have 
$$ 
|X^{N, i}_t - \bar X^{N,i}_t  |  \le   2 \| b \|_\infty t +  2  \| \psi\|_\infty N_t^{ \| f\|_\infty } + | A_t^N| + 
| E^{N, i}_t |   +  \left|  \int_{[0,t] } \left( \bar \mu_{s-}(f)  \right)^{1/ \alpha}   d S_s^{N,\alpha} \right| ,$$
where $ N_t^{\|f\|_\infty} = \int_{[0,t]\times\R_+ }   \indiq_{ \{ z \le \|f\|_\infty  \}} \bar \pi^i (ds,dz)  $ has Poisson distribution with parameter $ \|f\|_\infty t .$ We use once more the representation  $A_t^N=J_t^N+R_t^N,   $ obtained in Theorem \ref{prop:representation_finite_syst}.   

Put now 
$${\mathcal T}_t^{N, i} := 2 \| b \|_\infty t +  2  \| \psi\|_\infty N_t^{ \| f\|_\infty }  + |J_t^N| +|E^{i,N}_t| + \left|  \int_{[0,t] } \left( \bar \mu_{s-}(f)  \right)^{1/ \alpha}   d S_s^{N,\alpha}\right| ,$$
which has finite moments of order $ \tilde p, $ uniformly in $ N, $ for any $ 0 < \tilde p < \alpha.$ 
The sub-additivity of  $\R_+ \ni x \mapsto  \| x \|_{d_{\alpha_{-}}}   $ implies then that 
$$
\|X_t^{N,i}-\bar X_t^{N,i}\|_{d_{\alpha_{-}}}  \indiq_{\{ T_K^N \le t  \} } \le \|{\mathcal T}_t^{N, i } \|_{d_{\alpha_{-}}} \indiq_{\{ T_K^N \le t  \} }
  + \|R_t^N\|_{d_{\alpha_{-}}} .
$$
Finally, we choose $1 <  p<\frac {\alpha}{\alpha_{-}},$ use H\"older's inequality with conjugate coefficients $p$ and $q$ and moreover the fact that 
$$(|x|^{\alpha_{-}}\wedge|x|)^p=|x|^{p\alpha_{-}}\wedge|x|^p\leq |x|^{p\alpha_{-}}\wedge|x|, \mbox{ i.e. } (\|x\|_{d_{\alpha_{-}}} )^p\leq  \| x\|_{d_{p \alpha_{-}}} $$
to deduce that 
$$\E(\|{\mathcal T}_t^{N, i } \|_{d_{\alpha_{-}}} \indiq_{\{ T_K^N \le t  \} })\leq[\E (\|{\mathcal T}_t^{N, i } \|_{p d_{\alpha_{-}}})))]^{1/p} \mathbf{P} ({\{ T_K^N \le t  \} })^{1/q}\leq C_{ p \alpha_{-} } (t)  \mathbf{P}({\{ T_K^N \le t  \} })^{1/q},$$ 
where the constant $C_{ p \alpha_{-} } (t) $ does not depend on $N$ and is a bound on the $d_{ p \alpha_{-}}-$moment of $ {\mathcal T}_t^{N, i }, $ which is 
finite because  $p\alpha_{-}<\alpha .$ The conclusion of the proof follows then as in the case $ \alpha > 1.$ 

%

\end{proof}

\subsection{Discussion of the convergence rate obtained in Theorem \ref{thm:prop:chaos}}\label{sec:discussionrate}
We close this section with a discussion of the different ingredients that constitute our rates in \eqref{eq:finrate:alpha>1} and \eqref{eq:finrate:alpha<1}. Taking formally $ \alpha_{-} = \alpha ,$ it is clear that the main contribution to the error arises from $ r_t(N, \delta)$. The term that involves $ g (N \delta ) $ comes from the quantitative version of the stable central limit theorem and can presumably not be improved. The terms $ ( N \delta)^{- 1/2} $ (if $\alpha >1$) and $ ( N \delta)^{- \alpha_{-}/2} $ (if $ \alpha < 1 $) come from the fluctuations in the law of large numbers for the Poisson number of jumps per time interval. So they cannot be improved neither. The $ \delta-$power $ \delta^{1/\alpha } $ (or $\delta^{ \alpha_{-}/\alpha} $) comes from the fact that we discretize time and that we have to deal with the ``overlap'' coming from the stochastic integral against the stable process (that is, the integral between $ K \delta $ and $t , $ where $ K$ is such that $K \delta \le t < (K+1) \delta$). 

Finally, the very first term in the definition of $ r_t(N, \delta)$ comes from the necessity of approximating the interaction term $A_t^N$ of the finite particle system by its time-discretized version. In the $ \alpha_{-} \to \alpha$ limit, it is basically of order $ \delta^{ - 1/\alpha} $ if $ \alpha > 1 $ or $ \delta, $ if $ \alpha < 1, $ which is reasonable. 

To summarize, we believe that the term $ r_t(N, \delta) $ is rather optimal due to the above reasons. 

There are other terms that come into play, like for instance the error that is due to the Wasserstein distance between the empirical measure of the limit system and its theoretical counterpart -- we believe that this cannot be improved neither.

However, we see that in the case $ \alpha > 1, $ it is $ r_t(N, \delta)^{1/\alpha} $ (in the $ \alpha_+ \to \alpha-$ limit) that appears as a final rate. This is because we want to compare the finite system, which is of bounded variation, to the limit one, which is not. Roughly speaking, there are errors that have to be considered in the $L^1 - $norm (such as the one that we control in Proposition \ref{lem:preliminary_results}, in the case $ \alpha > 1$), and other errors that cannot be controlled in $L^1, $ but need to be controlled in another norm, for example $L^{ \alpha_+}, $ for $ \alpha_+ > \alpha > 1$ -- a control that we need to use e.g. in Lemma \ref{lem:controlM} to deal with the stochastic integral terms, when integrating against the stable process. To switch from one norm to another, using H\"older's inequality, we loose with respect to the accuracy of our error. And so it seems rather clear that the final rate of convergence has no reason to be optimal in the case $ \alpha > 1.$

\section{Appendix}
\subsection{Proof of Theorem \ref{thm:existfinite}, strong existence and uniqueness for the finite system }\label{subsec:strongexistfin} 

\begin{proof} 
Fix $N \in \N^\ast$ and let $\tau_0\coloneqq 0$ and $(\tau_n)_{n\geq 1}$ be the sequence of the jump times of the Poisson process $ (\sum_{j=1}^N \pi^j ( [0, t ] \times [0, \|f\|_\infty ] \times \R ))_{t \geq 0} .$ Let moreover $(U^k)_{k\geq 1}$ be an i.i.d. family of real-valued random variables, $\sim \nu,$ such that $ U^k $ is the atom of $ \sum_{j=1}^N \pi^j  ( \{ \tau_k\} \times [0, \|f\|_\infty ], \cdot ) .$ 
We construct the solution to \eqref{eq:micro:dyn} recursively on each interval $[\tau_n, \tau_{n+1}]$, $n\geq 0$. On $[\tau_0,\tau_1)$, the solution to \eqref{eq:micro:dyn} obeys 
\begin{equation}\label{eq:micro.dyn:no:jumps}
    \begin{split}
        X^{N,i}_t = X^{i}_0 + \int_0^t b(X^{N,i}_s, \mu^N_s) ds.
    \end{split}
\end{equation}
By Assumption \ref{ass:b}\ref{itm:bLip}, \eqref{eq:micro.dyn:no:jumps} admits a unique solution on $ [ 0, \tau_1 [ $ (\cite{graham92}). 
Suppose that at $\tau_1$, the $i$-th particle has a main jump. We put then $X^{N,i}_{\tau_1} = X^{N,i}_{\tau_1 - } + \psi (X^{N,i}_{\tau_1 - }, \mu^N_{\tau_1-} ) $ in case $ \alpha < 1 $ and  $X^{N,i}_{\tau_1} = X^{N,i}_{\tau_1 - } $ if $ \alpha > 1 .$ Moreover, we put $X^{N,j}_{\tau_1} = X^{N,j}_{\tau_1- } + \frac{U_1}{N^{1/\alpha}}$ for all $j\neq i$.    
We then solve the equation \eqref{eq:micro.dyn:no:jumps} on $[0, \tau_2-\tau_1]$  with this new initial condition and so on. Since $N$ is finite, $ \tau_n \to \infty $ as $ n \to \infty , $ such that the above construction can be achieved on the whole positive time axis. 

Last, by Assumption \ref{ass:nu0} and since $b$ and $\psi$ are bounded and the $(U_k)_{k\geq 1}$ have finite moments of order $p$ for all $p<\alpha$, we conclude that, for any fixed $N\in\N^\ast$, $t>0$ and $i=1,\ldots,N$, $X^{N,i}_t$ has finite moments of order $p$ for all $p<\alpha$. 
\end{proof}
\subsection{Proof of the strong existence of the limit system}\label{app:strong_ex_lim}
In this subsection we prove strong existence for the limit system \eqref{eq:limitequation1}. We start discussing the case $ \alpha > 1 $ when there are no main jumps. Our argument is partially inspired by the one used in Proposition 2 in \cite{fournier}. Fix a truncation level $K>0,$ recall that we assume $X_0 \in L^2$ and that we have introduced the constant $M_K$ in \eqref{eq:def:C_K}.  We define the following Picard iteration for all $ n \geq 1 :$   
\begin{eqnarray}\label{eq:def:picard} 
  X^{[0],K}_t &\equiv& X_0 \quad \forall \, t\geq 0,\nonumber \\
   X^{[n],K}_0 & =& X_0 \quad \forall\, n \geq 1, \nonumber \\
    X^{[n],K}_t &= &X^{[n-1],K}_0 + \int_0^t b(X^{[n-1],K}_s,\mu^{[n-1],K}_s) ds + \int_{[0,t]\times B^\ast_K} \! \!  \left(\mu^{[n-1],K}_{s -}(f)\right)^{1/\alpha} z \tilde M(ds,dz) \nonumber \\
   &     &  -M_K \int_0^t \left(\mu^{[n-1],K}_s(f) \right)^{1/\alpha} ds   ,
\end{eqnarray}
where $\mu^{[n],K}_s \coloneqq \mathcal{L}(X^{[n],K}_s \, | \, S^\alpha)$. 

Using similar arguments as in the proof of the uniqueness, we have the a priori bound 
\begin{equation}\label{eq:a_priori_bound}
    \begin{split}
        \E\left[\sup_{t \in [0,T]} \left| X^{[n],K}_t \right|^2\right] & \leq  C_{K,T}(1+\E[|X_0|^2])
    \end{split}
\end{equation}
for some constant $C_{K,T}$ which does only depend on the truncation level $K$ and on $T,$ but not on $n$.

We show now that, for any $T>0$ and $t \in [0,T]$, the sequence $(X^{[n],K}_t)_{n\geq 0}$ defined in \eqref{eq:def:picard} converges a.s. to a limit $\bar{X}^K_t$.

First notice that, analogously to what we obtained in the uniqueness proof (see \eqref{eq:uniqueness:gronwall}), 
\begin{equation}\label{eq:previous_bound}
    \begin{split}
        \E\left[\sup_{t \in [0,T]} \left| X^{[n+1],K}_t - X^{[n],K}_t\right|^2\right] & \leq C_{K,T} \int_0^T \E\left[\sup_{t \in [0,s]} \left| X^{[n],K}_t - X^{[n-1],K}_t\right|^2\right] ds
    \end{split}
\end{equation}
for some constant $C_{K,T}$ non-decreasing with respect to $T$.  

Introduce for any $ n \geq 0$ 
\begin{equation}
u^{[n],K}_T \coloneqq \E\left[\sup_{t \in [0,T]} \left| X^{[n+1],K}_t - X^{[n],K}_t\right|^2\right]  
\end{equation}
and iterate \eqref{eq:previous_bound} to obtain
\begin{multline*}
         u^{[n],K}_T  \leq C_{K,T}^2 \int_0^T ds_1 \int_0^{s_1} u^{[n-2],K}_{s_2} ds_2
          \leq C_{K,T}^n \int_0^T ds_1 \int_0^{s_1} ds_2 \dots \int_0^{s_{n-1}} ds_{n} u^{[0],K}_{s_n} \\
          \leq    \frac{T^n}{ n !} C_{K,T}^n u^{[0],K}_{T}, 
\end{multline*}
where $u^{[0],K}_{T} = \E\left[\sup_{t \in [0, T]} \left|X^{[1],K}_t - X^{[0],K}_t\right|^2\right] \le C_T  $ is bounded thanks to \eqref{eq:a_priori_bound}.

This implies that 
\begin{multline*}
\E\left[ \sum_{n\geq 0} \sup_{t \in [0,T]} \left| X^{[n+1],K}_t - X^{[n],K}_t\right| \right]   \leq \sum_{n\geq 0} \sqrt{ \E\left[\sup_{t \in [0,T]} \left| X^{[n+1],K}_t - X^{[n],K}_t \right|^2\right]} \\
 = \sum_{n\geq 0}\sqrt{u^{[n],K}_T} \le \sqrt{C_T}  \sum_{n\geq 0} C_{K,T}^{n/2} \frac{T^{n/2}}{\sqrt{n !}} < +\infty , 
\end{multline*}
such that
\begin{equation*}
\sup_{t \in [0,T]} \sum_{n\geq 0}  \left| X^{[n+1],K}_t - X^{[n],K}_t\right| 
< +\infty \qquad \text{a.s..}
\end{equation*}
Hence the series $\sum_{n\geq 0} \left( X^{[n+1],K}_t - X^{[n],K}_t\right) $ converges a.s. and we can define a.s. 
\begin{equation*}
\bar{X}^K_t \coloneqq X_0 + \sum_{n\geq 0} \left( X^{[n+1],K}_t - X^{[n],K} _t\right) .
\end{equation*}

The next step is to prove that the a.s. limit of $(X^{[n],K}_t)_{n\geq 0}$, $\bar X^K_t$, solves the same equation as the process $\bar X_t$ on $[0,  T_K[,$ that is, that almost surely,
\begin{eqnarray}\label{eq:picard_limit:fixed_K}
\bar X^K_t & =& X_0 + \int_0^t b(\bar X^K_s, \bar \mu^K_s) ds + \int_{[0,t]\times B^\ast_K} \left(\bar \mu^K_{s-}(f)\right)^{1/\alpha} z \tilde M(ds,dz) \nonumber \\
        && \qquad -M_K \int_0^t \left(\bar \mu^K_s(f) \right)^{1/\alpha} ds ,
\end{eqnarray}
where $\bar \mu^K_s \coloneqq \mathcal{L}(\bar X^K_s \, | \, S^\alpha)$. 
This follows by taking the $n \to +\infty$ limit in \eqref{eq:def:picard}, since 
\begin{itemize}
\item by Assumption \ref{ass:b}\ref{itm:bLip}, $\E[|b(X^{[n],K}_t, \mu^{[n],K}_t) - b(\bar X^K_t, \bar \mu^K_t)|] \leq C \E[|X^{[n],K}_t -\bar X^K_t|]$. 
On the other hand, $\E[|X^{[n],K}_t -\bar X^K_t|] \to 0$ by dominated convergence.  
Hence, from this $L^1$ convergence of $X^{[n],K}_t$ to $\bar X^K_t$,  we obtain $L^1$ convergence of  $b(X^{[n],K}_t, \mu^{[n],K}_t)$ to $b(\bar X^K_t, \bar \mu^K_t)$. This latter yields in turn a.s. convergence, up to a subsequence. 
\item $\mu^{[n],K}_t(f)  = \E\left[f(X^{[n],K}_t) \, | \, S^\alpha \right] \to \E\left[f(\bar X^K_t) \, | \, S^\alpha\right] \eqqcolon \bar\mu^K_t(f)$ a.s.  
Moreover, we have that  $| \mu^{[n],K}_t(f) |\leq \norme{f}_\infty$ by Assumption \ref{ass:f}\ref{itm:fbdd}, hence we obtain the $L^2$-convergence of the stochastic integrals against $ \tilde M,$ whence the almost sure convergence, once again for a subsequence. The convergence of $ \int_0^t \left(\mu^{[n],K}_s(f) \right)^{1/\alpha} ds$ follows similarly.
\end{itemize}

As a consequence of the above construction we dispose of a family of processes $(\bar X^{K})_{K \in \N^*}$ such that
\begin{itemize}
\item[(i)] for any $K$, $\bar X^K$ solves \eqref{eq:picard_limit:fixed_K};  
\item[(ii)] $\bar X^{K+1}_t = \bar X^K_t$ a.s. for all $t\in [0,T_K[ ,$ since both are solution of the same equation on $ [0, T_K[.$  
\end{itemize} 

So, letting $T_0 = 0$, the following process is well-defined 
\[
\bar X_t \coloneqq \sum_{K \geq 1} \indiq_{[T_{K-1}, T_{K}[}(t) \bar X^{K}_t, 
\]
and it solves \eqref{eq:limitsystem} on $[0,T]$ for any $T\geq 0$.

The same construction thanks to a Picard iteration works also in the case $ \alpha < 1, $ using $L^1$-norm instead of $L^2$-norm now. Details are omitted.

\subsection{Proof of Lemma \ref{lem:fournier_guillin}}\label{app:proof_fg}
\begin{proof}
\sloppy Under the law $ \mathbf{P} ( \cdot | S^{N, \alpha}), $ that is, conditionally on $S^{N, \alpha}, $ the $N$ coordinates $ \bar X^{N, 1 }_t , \ldots, \bar X^{N, N}_t $ are i.i.d. and distributed according to $ \bar \mu_t. $ 

Let us first treat the case $ \am  < \frac12.$ We have already argued in  Remark \ref{rem:33}, see in particular \eqref{eq:mufin2}, that $ \bar \mu_t $ admits a finite first moment. Theorem 1 of \cite{fournierguillin} (with their $p$ replaced by $ \am $ and their $q$ replaced by $1$, and using conditional expectation $ \E ( \cdot | S^{N, \alpha})$ instead of unconditional one) implies that, for a universal constant $ C( \am ), $   
\[ \E \left(W_ \am  (  \mu_t^{N, \bar X^N}, \bar \mu_t ) | S^{N, \alpha } \right) \le C ( \am )  \left( \int |x| \bar \mu_t ( dx)\right)^ \am  \cdot 
N^{-  \am  } . \] 
Relying on the upper bound obtained in Remark \ref{rem:33} above, we have that 
$  \int |x| \bar \mu_t ( dx) \le \E ( |X_0 | ) + Ct + \|f\|_\infty  \sup_{s \le t } | S_s^{N, \alpha}| .$ 
Using the sub-additivity of the function $ |\cdot |^ \am  $ and taking expectation then yields the result.

We now treat the case $ 1 >  \am  > \frac12, $ in which case $ \alpha > \frac12 $ as well, since $ \alpha >  \am .$ In this case, $ \bar \mu_t $ admits a finite moment of order $ 2 \alpha ,$ since $ X_0 $ does by assumption. We now apply Theorem 1 of \cite{fournierguillin} with their $p$ replaced by $ \am $ and their $q$ replaced by $2 \alpha$ such that  
\[ \E \left(W_ \am  (  \mu_t^{N, \bar X^N}, \bar \mu_t ) | S^{N, \alpha } \right) \le C ( \am , \alpha )  \left( \int |x|^{2 \alpha} \bar \mu_t ( dx)\right)^{ \am  /(2 \alpha)} \cdot 
( N^{- 1/2 } + N^{ - ( 1 - \frac{ \am }{{2 \alpha}} )} )  . \] 
Since $ \frac12 < 1 - \frac{ \am }{{2 \alpha}} , $ the leading order of the above expression is given by $ N^{ - 1/2}.$ Moreover, 
\[  \int |x|^{2 \alpha} \bar \mu_t ( dx) \le C_t ( \E ( |X_0 |^{2 \alpha} ) + 1 + \sup_{s \le t } | S_s^{N, \alpha}|^{2 \alpha}  .\] 
The sub-additivity of the function $ | \cdot |^{  \am  / ( 2 \alpha ) } $ (recall that $  \am  < \alpha $) implies then that, yet for another constant $ \tilde C_t,$  
\[ \left( \int |x|^{2 \alpha} \bar \mu_t ( dx)\right)^{ \am  /(2 \alpha)}  \le \tilde C_t ( 1 + \sup_{s \le t } | S_s^{N, \alpha}|^ \am ) , \]
and we take expectation to conclude. 

Finally in case $ \alpha > 1,$ we use Theorem 1 of \cite{fournierguillin} with their $p$ replaced by $1$ and their $q$ replaced by our $p$ such that   
\[ \E \left(W_1 (  \mu_t^{N, \bar X^N}, \bar \mu_t ) | S^{N, \alpha } \right) \le C (p)  \left( \int |x|^p \bar \mu_t ( dx)\right)^{1/p} \cdot 
N^{- 1/2  } , \]
and we conclude similarly as above, using the sub-additivity of the function $ |\cdot |^{1/p}. $  
\end{proof}

\subsection{Proof of Remark \ref{rem:rate}}\label{sec:delta_choice}

\textbf{Case $\pmb{1<\alpha<2}$. }

According to Theorem \ref{thm:prop:chaos}, the error term is given, up to a constant, by  \eqref{eq:finrate:alpha>1}, where we have to choose $ \delta = \delta (N) $ such that $ N \delta \to \infty.$

To understand formally what is the leading term in our error, we let $ \am \uparrow \alpha $ and $\ap \downarrow \alpha$. In the limit $\am=\ap = \alpha$, we are left with an error term given by (up to a constant and to the common power $1/\alpha$) 
\[ \delta^{\frac{1}{\alpha^2}}
 + \delta^{\frac{1-\alpha}{\alpha}}\left(   g(N \delta)+  (N\delta)^{- 1/2} \right)   .\]

Since $g(x) = x^{-B}$ for some $B>0$, if we suppose $\delta = N^{-\eta}$ for some $\eta \in (0,1)$, we can write 
\[ \delta^{\frac{1-\alpha}{\alpha}}\left(   g(N \delta)+  (N\delta)^{- 1/2} \right) = N^{ \eta ( 1 - 1/ \alpha + B ) } g(N) + N^{ \eta(  3/2 - 1/ \alpha ) } N^{ - 1/2} , \]
which is an increasing function of $\eta.$ 
On the other hand, $\delta^{\frac{1}{\alpha^2}}$  is a decreasing function of $\eta, $ and so we have to choose $\eta$ such that the two terms which are left are equal, that is, 
\[ \delta^{ \frac {1-\alpha + \alpha^2}{\alpha^2}} =    g(N \delta)+  (N\delta)^{- 1/2}  .\] 
The leading term between $g(N\delta)$ and $(N\delta)^{-1/2}$ will asymptotically behave as $(N \delta)^{-C}$, with either $C=B$ or $C= 1/2$. 
Then we have to solve
\[ \delta^{ \frac {1-\alpha + \alpha^2}{\alpha^2}} = N^{-C} \delta^{-C} ,
\mbox{ which gives } \delta = N^{ - \frac{C\alpha^2}{1 -\alpha + C\alpha^2 + \alpha^2 } }  .\]
By the equality we imposed, and re-introducing the $1/\alpha$ power, the rate will be 
\[ \delta^{ 1 /\alpha^3 } =  N^{ - \frac{C}{(1-\alpha+C\alpha^2 + \alpha^2 )\alpha   } } .\]

Consider now the explicit form of the function $g$ given in \eqref{eq:def:g}. We give the explicit rate (i.e. we choose $C$) in the two cases: 

\textbf{Case 1:} $\gamma < 2-\alpha$. Then, if $\gamma < \alpha/2$, $C= \gamma/\alpha$ and the rate is 
\[  N^{ - \frac{\gamma}{ \alpha^2 (1-\alpha +\gamma\alpha + \alpha^2 )} } ,\] 
whereas, if $\gamma \in \left(\frac{\alpha}{2}, 2-\alpha\right)$, $ C = 1/2,$ such that the rate is 
\[  N^{ - \frac{1}{2\alpha \left(1 -\alpha + \frac{3}{2}\alpha^2 \right)} } .\]

\textbf{Case 2:} $\gamma > 2- \alpha$. Then, for $ \alpha \in ( 1, \frac43 ) , $ $ C = 1/2,$ such that the rate is once more 
\[ N^{ - \frac{1}{2\alpha \left(1 -\alpha + \frac{3}{2}\alpha^2 \right)} } .\]
If $\alpha > \frac{4}{3}$, then $C = \frac{2-\alpha}{\alpha}$, such that the rate is
\[   N^{-\frac{2-\alpha}{\alpha^2(1+\alpha)}} . \]

\textbf{Case $\pmb{\alpha<1}$.}

According to Theorem \ref{thm:prop:chaos}, the error term equals, up to a constant, the expression  \eqref{eq:finrate:alpha<1}. Taking $\am=\alpha$ gives the sharpest possible bound, which includes the terms 
\[
\delta \qquad g(N\delta) \qquad (N\delta)^{-\frac{\alpha}{2}} 
\]
where $ (N\delta)^{-\frac{\alpha}{2}}$ dominates the term $(N\delta)^{-1}$ which is present in the expression of $g$ given in \eqref{eq:def:g:alpha<1}.  
Hence we are left with an error proportional to 
\[
\delta  + \left[(N\delta)^{-\frac{\gamma}{\alpha}} + (N\delta)^{\frac{\alpha-1}{\alpha}}\right] 
 + (N\delta)^{-\frac{\alpha}{2}} .
\]

Take now $\delta = N^{-\eta}$ with $0<\eta<1$. Then $ (N\delta)^{-\frac{\gamma}{\alpha}} + (N\delta)^{\frac{\alpha-1}{\alpha}} + (N\delta)^{-\frac{\alpha}{2}}$ is an increasing function of $\eta$ whose leading order has the form $(N\delta)^{-C}$ with $C$ one of those exponents, while $\delta$ is a decreasing function of $\eta.$ Therefore, we impose
\[
\delta = (N\delta)^{-C}, \mbox{ whence } \delta = N^{-\frac{C}{1+C}} .
\]
We conclude by choosing $C$ according to \eqref{eq:def:g:alpha<1}.
If $\alpha \in (0,\sqrt{3}-1)$ and $\gamma < \frac{\alpha^2}{2}$ or if $\alpha \in (\sqrt{3}-1,1)$ and $\gamma < 1-\alpha$, the rate is 
\[ N^{-\frac{\gamma}{\alpha+\gamma}} .\]
If $\alpha \in (0,\sqrt{3}-1)$ and $\gamma > \frac{\alpha^2}{2}$, the rate is 
\[  N^{-\frac{\alpha}{2+\alpha}} .\]
If $\alpha \in (\sqrt{3}-1,1)$ and $\gamma > 1-\alpha$, the rate is
\[ N^{\alpha-1}. \]

\subsection{A coupling lemma}\label{app:coupling}
We recall here for convenience Lemma 3.12 in \cite{prodhommeArxiv}, which is proven there. 
\begin{lemma}\label{lem:prodhommeArxiv}
Let $E_1$ and $E_2$ be two complete separable metric spaces, let $\mu$ be a probability distribution on $(E_1 \times E_2, \mathcal{B}(E_1) \otimes \mathcal{B}(E_2))$. Let $\mu_1$ denote the first marginal of $\mu$. There exists a measurable function $G : E_1 \times (0,1) \to E_2$ such that if $(X_1, V) \sim \mu_1 \otimes U(0,1) $, then  $(X_1, G(X_1, V)) \sim \mu$.  
\end{lemma}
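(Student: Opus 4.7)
The plan is to combine a regular disintegration of $\mu$ with a measurable version of the inverse CDF / quantile trick, which on a Polish space can always be transported to the interval $(0,1)$.

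First I would invoke the disintegration theorem for standard Borel spaces (see e.g.\ Theorem 8.5 in \cite{kallenberg}): since $E_1$ and $E_2$ are Polish, there exists a probability kernel $K : E_1 \times \mathcal{B}(E_2) \to [0,1]$ such that
\[
\mu(A \times B) = \int_A K(x_1, B)\, \mu_1(dx_1), \qquad A \in \mathcal{B}(E_1),\; B \in \mathcal{B}(E_2),
\]
and $x_1 \mapsto K(x_1, B)$ is Borel measurable for every $B \in \mathcal{B}(E_2)$. This reduces the problem to constructing, for each $x_1$, a measurable map $G(x_1, \cdot) : (0,1) \to E_2$ such that $G(x_1, V) \sim K(x_1, \cdot)$ when $V \sim U(0,1)$, with joint measurability in $(x_1, v)$.

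Next I would use that every Polish space is Borel-isomorphic to a Borel subset of $[0,1]$ (Kuratowski's theorem). Fix such a Borel isomorphism $\phi : E_2 \to B \subset [0,1]$ and let $\tilde K(x_1, \cdot)$ denote the pushforward of $K(x_1, \cdot)$ by $\phi$. Define the cumulative distribution function
\[
F_{x_1}(t) \coloneqq \tilde K(x_1, [0,t]), \qquad t \in [0,1],
\]
and its right-continuous inverse
\[
F_{x_1}^{-1}(v) \coloneqq \inf\{ t \in [0,1] : F_{x_1}(t) \geq v \}, \qquad v \in (0,1).
\]
The classical one-dimensional argument shows that $F_{x_1}^{-1}(V) \sim \tilde K(x_1,\cdot)$ for $V \sim U(0,1)$, so the candidate is $G(x_1, v) \coloneqq \phi^{-1}(F_{x_1}^{-1}(v))$.

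The main obstacle is joint measurability of $G$. For this I would check that $(x_1, t) \mapsto F_{x_1}(t)$ is jointly Borel, which follows from the measurability of $x_1 \mapsto K(x_1, B)$ applied to the generating sets $B = \phi^{-1}([0,t])$, together with right-continuity in $t$ (so it suffices to restrict to rational $t$). Jointly measurability of $F_{x_1}^{-1}$ is then standard, since
\[
\{(x_1, v) : F_{x_1}^{-1}(v) \leq t\} = \{(x_1, v) : v \leq F_{x_1}(t)\},
\]
and $\phi^{-1}$ is Borel. Finally I would verify the distributional identity: for any bounded measurable $h : E_1 \times E_2 \to \R$,
\[
\E[h(X_1, G(X_1, V))] = \int_{E_1} \!\!\int_0^1 h(x_1, G(x_1, v))\, dv\, \mu_1(dx_1) = \int_{E_1} \!\!\int_{E_2} h(x_1, x_2)\, K(x_1, dx_2)\, \mu_1(dx_1),
\]
which equals $\int h\, d\mu$ by Fubini and the disintegration, concluding the proof.
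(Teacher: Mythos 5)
Your proof is correct and follows the standard route; the paper itself gives no proof of this lemma, merely recalling it as Lemma 3.12 of \cite{prodhommeArxiv} and pointing to the proof there, which uses essentially the same ingredients (disintegration into a kernel, Borel isomorphism of $E_2$ onto a Borel subset of $[0,1]$, quantile transform, joint measurability). One cosmetic point: $F_{x_1}^{-1}(v)$ lies in $B$ only for Lebesgue-a.e.\ $v$, so $\phi^{-1}$ should be extended off $B$ by an arbitrary fixed point of $E_2$; this changes nothing in the measurability or the distributional identity.
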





\begin{acks}
E.L. and D.L. acknowledge support of the Institut Henri Poincar\'e (UAR 839 CNRS-Sorbonne Universit\'e), and LabEx CARMIN (ANR-10-LABX-59-01). This work has been conducted as part of  the ANR project ANR-19-CE40-0024. E.M. acknowledges financial support from Progetto Dottorati - Fondazione Cassa di Risparmio di Padova e Rovigo and from the ANR grant ANR-21-CE40-0006 SINEQ. We thank the two anonymous referees and Jan Swart for their careful reading and the many useful remarks that have helped us to improve the manuscript. 
\end{acks}

\end{document}